\theoremstyle{thmstyleone}%
\newtheorem{theorem}{Theorem}
\newtheorem{proposition}[theorem]{Proposition}%
\newtheorem{lemma}[theorem]{Lemma}%
\newtheorem{corollary}[theorem]{Corollary}%
\theoremstyle{thmstyletwo}%
\newtheorem{example}{Example}%
\newtheorem{remark}{Remark}%
\theoremstyle{thmstylethree}%
\newtheorem{definition}{Definition}%
\newcommand{\C}{{\mathbb{C}}}
\newcommand{\R}{{\mathbb{R}}}
\renewcommand{\epsilon}{\varepsilon}
\renewcommand{\phi}{\varphi}
\renewcommand{\theta}{\vartheta}
\newcommand{\id}{\mathrm{id}}
\begin{document}

\title[Bifurcations of highly inclined near halo orbits using Moser regularization]{Bifurcations of highly inclined near halo orbits using Moser regularization}


\author[1]{\fnm{Chankyu} \sur{Joung}}\email{chankyu.joung@gmail.com}

\author[2]{\fnm{Dayung} \sur{Koh}}\email{dayung.koh@gmail.com}

\author*[1]{\fnm{Otto} \spfx{van} \sur{Koert}}\email{okoert@snu.ac.kr}

\affil*[1]{\orgdiv{Department of Mathematical Sciences}, \orgname{Seoul National University}, \orgaddress{\street{1, Gwanak-ro, Gwanak-gu}, \city{Seoul}, \postcode{08826}, \country{South Korea}}}

\affil[2]{\orgname{Independent Researcher}, \orgaddress{\city{Pasadena}, \country{US}}}


\abstract{We study the bifurcation structure of highly inclined near halo orbits with close approaches to the light primary, in the circular restricted three-body problem (CR3BP).
Using a Hamiltonian formulation together with Moser regularization, we develop a numerical framework for the continuation of periodic orbits and the computation of their Floquet multipliers which remains effective near collision.
We describe vertical collision orbits and families emerging from its pitchfork, period-doubling, and period-tripling bifurcations in the limiting Hill's problem, including the halo and butterfly families.
We continue these into the CR3BP using a perturbative framework via a symplectic scaling, and construct bifurcation graphs for representative systems (Saturn--Enceladus, Earth--Moon, Copenhagen) to identify common dynamical features.
Conley--Zehnder indices are computed to classify the resulting families.
Together, these results provide a coherent global picture of polar orbit architecture near the light primary, offering groundwork for future mission design, such as Enceladus plume sampling missions.
}

\keywords{halo orbits, Moser regularization, restricted three-body problem, periodic orbits, bifurcations}


\pacs[MSC Classification]{70H12, 70F07, 70G45}

\maketitle

\tableofcontents

\newpage
\section{Introduction}
We study polar orbits, i.e.,~spatial periodic orbits with large vertical components and close approaches to a primary, in the circular restricted three-body problem (CR3BP). Such orbits are of special interest in space mission design. For instance, the \emph{halo orbit family} is widely used in missions for its advantageous out-of-plane geometry. Near rectilinear halo orbits (NRHOs)~\cite{howell_breakwell_almost_rectilinear_halo_1984} offer repeated close approaches to one celestial body while maintaining continuous visibility of the other, a feature underlying their use in NASA's planned Gateway mission,~\cite{fuller_gateway_2022}. NRHOs have also been proposed as science orbits for plume sampling missions to Saturn's icy moon Enceladus,~\cite{russell_lara_enceladus_2009,mackenzie_orbilander_2021}, which is one of the primary motivations for this study.

We will use a Hamiltonian description of the CR3BP in a uniformly rotating frame,
\[
H =\frac{1}{2}|\mathbf{p}|^2 +p_1q_2-p_2q_1 -\frac{1-\mu}{|\mathbf{q}-(-\mu,0,0)|}
-\frac{\mu}{|\mathbf{q}-(1-\mu,0,0)|}.
\]
Here, $\mathbf{q}=(q_1, q_2, q_3)$ and $\mathbf{p}=(p_1, p_2, p_3)$ denote the position and momentum, and $\mu\in [0,1]$ is the mass ratio of the primaries. We focus on the case $\mu$ close to $0$ and on motion near the light primary at $\mathbf{q}=(1 - \mu, 0, 0)$. By rescaling the coordinates and taking the limit $\mu\to 0$, one obtains the Hill's problem (see Section~\ref{sec:rescaling} for details), whose Hamiltonian is 
\[
H =\frac{1}{2}|\mathbf{p}|^2 +p_1q_2-p_2q_1 -\frac{1}{|\mathbf{q}|} +\frac{1}{2} |\mathbf{q}|^2
-\frac{3}{2}q_1^2 .
\]

From an abstract perspective, the limit cases of families of periodic orbits are of particular importance. 
For example, many periodic orbits in the CR3BP arise as perturbations of periodic orbits in the limiting case of the Hill's problem, which H\'enon referred to as \emph{generating orbits},~\cite{henon_generating_1997}.
In the study of polar orbits, an important example is the \emph{vertical collision orbit} in Hill's problem, a trajectory confined to the vertical $q_3$-axis that reaches a maximum height before ending in a collision, see Figure~\ref{fig:collision-orbit}. After regularization, this orbit becomes periodic and serves as an important generating orbit for nearby polar orbits.\footnote{Another limit to consider occurs as the Hamiltonian energy decreases to $-\infty$. After an appropriate rescaling, the dynamics of the energy surface converges to the Kepler problem. In this setting, Moser,~\cite{moser_regularization_1970}, showed using the averaging method that periodic families bifurcate from a set of four ``basic'' Keplerian orbits: the retrograde and direct planar circular orbits, and a pair of vertical collision orbits.} 
Recent work by Aydin and Batkhin,~\cite{aydin_Batkhin_hill_2025}, provides a comprehensive study of bifurcating families of multiple covers of the vertical collision orbit within Hill's problem, along with an overview of earlier results,~\cite{lidov_families_1983,batkhin_hierarchy_2009,belbruno_family_2019}.
One aim of this work is to reveal how this vertical collision orbit continues into the CR3BP and to organize the bifurcation structure of related families of polar orbits.

A detailed numerical study of near-collision periodic orbits requires a regularization framework compatible with high-precision continuation. The Kustaanheimo--Stiefel (KS) regularization,~\cite{Kustaanheimo_stiefel_regularization_1965}, as described for the numerical setting in~\cite{howell_breakwell_almost_rectilinear_halo_1984}, has traditionally been used for this purpose. In this study, we adopt an alternative approach based on Moser regularization. This framework offers several advantages: it is applicable in arbitrary dimensions (e.g.,~planar or spatial CR3BP), preserves periodicity of orbits between the regularized and unregularized systems, and provides a direct correspondence of Floquet multipliers (Lemma~\ref{lem:moser-floquet}), simplifying stability analysis. We provide a first, complete implementation of Moser regularization for numerical continuation and stability analysis of periodic orbits. Our implementation treats the dynamics as a Hamiltonian system with constraints, whose framework we review in Appendix~\ref{sec:restricted-phase-space}. In particular, the regularized phase space appears as a $6$-dimensional constrained submanifold embedded in an $8$-dimensional Euclidean space.

Based on this setup, we investigate the bifurcation structure of polar orbits near the light primary in the Moser regularized CR3BP and in Hill's problem.
We identify the pitchfork, period-doubling, and period-tripling bifurcations of the vertical collision family, the associated branching families, and how these structures persist or change under perturbation from Hill's problem to the CR3BP. Bifurcation graphs are computed for several representative systems, including the Saturn--Enceladus, Earth--Moon, and Copenhagen problems. 
A summary of the periodic families and their key properties (including symmetries and Conley--Zehnder indices) is provided in the conclusion (Table~\ref{tbl:orbit-summary}) for convenient reference.
Our results provide a global understanding of key orbit families relevant to mission design, such as halo and butterfly families, while providing new observations on their behavior near collision. In particular, this work complements and extends the foundational analysis of halo and near rectilinear halo orbits,~\cite{Breakwell1979-lm,howell_three-dimensional_1984,howell-campbell_three-dimensional_1999,gomez_llibre_martinez_simo_dynamics_vol1}, and further connects these findings to the symplectic geometry perspective through a Hamiltonian formulation and the use of Moser regularization.

Notably, we describe the near-collision bifurcation structure of four families of three-lobed polar orbits, which we refer to as tri-fly orbits due to their shape. In Hill's problem, these families are associated with a period-tripling bifurcation of the vertical collision orbit, while in the Saturn--Enceladus and Earth--Moon systems they are organized by touch-and-go and pitchfork bifurcations which occur near the triple cover of the $L_2$ halo orbit. Determining the bifurcation mechanism involved in  these orbits for small mass parameters, such as in Saturn--Enceladus, requires both high precision and regularization.
Members of these families include low-altitude south pole flyby trajectories in the Enceladus system (see Figure~\ref{fig:trifly-enc}).

For detailed analysis, we compute the Conley--Zehnder indices of the periodic orbit families in this study; the Conley--Zehnder index is a symplectic invariant that remains constant along non-degenerate continuous families and whose change implies bifurcations. This index provides both a diagnostic tool for numerical detection of bifurcations and a topological classification of families, and has been recently used for numerical bifurcation studies of CR3BP and related systems,~\cite{moreno_aydin_van_Koert_Frauenfelder_Koh_Bifurcation_Graphs_2024,aydin_Batkhin_hill_2025,aydin_dro_2025}. We give details and computation methods in Appendix~\ref{sec:appendix_cz}.

The paper is organized as follows. 
The main text focuses on the underlying celestial mechanics, while the contents of a purely mathematical nature are collected in the appendix. A brief outline is given below.
Section~\ref{sec:models} introduces the rescaled Hamiltonian formulation for CR3BP and its limiting relation to Hill's problem.
Section~\ref{sec:moser} presents the numerical framework for Moser regularization and introduces the vertical collision orbit.
Section~\ref{sec:numerical-work} presents the numerical results.
In Appendix~\ref{sec:restricted-phase-space}, we provide details on Hamiltonians with constraints and the restricted phase space. We follow that in Appendix~\ref{sec:df_derivation} with additional details of differential correction in Moser regularization. Appendix~\ref{sec:appendix_cz} discusses computations of Conley--Zehnder indices. 
Finally, Appendix~\ref{sec:data} provides numerical data for the orbit families discussed in this work.

\section{Dynamical Models and Symplectic Scaling Framework}
\label{sec:models}
\subsection{The Circular Restricted Three-Body Problem (CR3BP)}
We consider the \emph{spatial circular restricted three-body problem} (CR3BP), which models the motion of an infinitesimal particle in three-dimensional Euclidean space, under the gravitational influence of two massive primary bodies in circular motion. The \emph{mass ratio} of the primaries is denoted by the parameter \(\mu \in [0, 1]\).

We choose a rotating coordinate frame fixing the two primaries along the $x$-axis, the first primary at $\mathbf{e} = (-\mu, 0, 0)$ and the second at $\mathbf{m} = (1 - \mu, 0, 0)$, and the barycenter of the system at the origin. For our choice of rotation direction, the Hamiltonian of the CR3BP is given by
\begin{equation*}
    H_\mu(\mathbf{q}, \mathbf{p}) = \frac{1}{2}|\mathbf{p}|^2  + p_1q_2 - p_2q_1 - \frac{\mu}{|\mathbf{q} - \mathbf{m}|} - \frac{1 - \mu}{|\mathbf{q} - \mathbf{e}|},
\end{equation*}
where the position vector is $\mathbf{q} = (q_1, q_2, q_3) \in \mathbb{R}^3$, the momentum vector is $\mathbf{p} = (p_1, p_2, p_3) \in \mathbb{R}^3$.
The motion of the infinitesimal object is described by the Hamilton equations, given by 
\begin{equation*}
    \dot{\mathbf{q}} = \partial_{\mathbf{p}} H, \quad \dot{\mathbf{p}} = - \partial_{\mathbf{q}} H.
\end{equation*}
We define the Hamiltonian vector field of $H_\mu$ as $X_\mu = (\partial_{\mathbf{p}} H, -\partial_{\mathbf{q}} H)$.
Due to energy conservation, each trajectory is confined to a five-dimensional energy surface $\Sigma_c = H^{-1}(c)$. The constant $c$ is referred to as the \textit{Jacobi energy}, and the corresponding \textit{Jacobi constant} is given by $\Gamma = -2c$.
The system has five equilibrium points, called \emph{Lagrange points}. The three collinear points $L_1, L_2, L_3$ lie on the $q_1$-axis, while the two equilateral points $L_4, L_5$ form equilateral triangles with the primaries (see Figure~\ref{fig:cr3bp-hill}).

Our main focus is on the dynamics near the light primary, including the region around the collinear points $L_1$ and $L_2$, in the regime where $\mu$ is small. This setting is relevant for most planet--moon systems, and is of particular interest for the exploration of icy moons such as Europa orbiting Jupiter and Enceladus orbiting Saturn. Table~\ref{tbl:mass_ratios} lists the mass ratios $\mu$ of interest in this study, with values from NASA's Jet Propulsion Laboratory solar system dynamics website: \url{https://ssd.jpl.nasa.gov}.
\begin{table}[h!]
\caption{Table of mass ratio parameters for different three-body systems.}
\label{tbl:mass_ratios}
\begin{tabular}{l|c}
\toprule
\textbf{System} & \textbf{Mass Ratio ($\mu$)} \\
\midrule
Earth--Moon       & $1.215058560962404 \times 10^{-2}$ \\
Jupiter--Europa   & $2.528017528540000 \times 10^{-5}$ \\
Saturn--Enceladus & $1.901109735892602 \times 10^{-7}$ \\
\botrule
\end{tabular}
\end{table}

\subsection{Hill's Problem and Symmetries}
\label{sec:hills_problem}
The Hill's problem,~\cite{hill_researches_1878}, is a limiting case of the CR3BP where the first primary is assumed to be much heavier than the second, and the infinitesimal object to be very close to the light primary. 
The Hamiltonian for the Hill's problem is given as follows.
\begin{equation}
    \label{eqn:hill-hamiltonian}
    \hat{H}_0(\mathbf{q}, \mathbf{p}) = \frac{1}{2}|\mathbf{p}|^2 - \frac{1}{|\mathbf{q}|} + p_1 q_2 - p_2 q_1 + \frac{1}{2}|\mathbf{q}|^2 - \frac{3}{2}q_1^2.
\end{equation}
We denote the Hamiltonian by $\hat{H}_0$ because in the next section we will introduce a rescaled Hamiltonian $\hat{H}_\mu$ for the CR3BP, which converges to $\hat{H}_0$ as $\mu\to 0$.
Among the five equilibrium (Lagrange) points in the CR3BP, only the two collinear points $L_1$ and $L_2$ remain in the Hill's problem (see Figure~\ref{fig:cr3bp-hill}).
Like the CR3BP, the Hill's problem is non-integrable, \cite{Morales-Ruiz-non-integrable}, so numerical methods are essential for studying its detailed dynamics,~\cite{simo-stuchi_centeral_2000}.

The Hill's problem comes with additional symmetries compared to the CR3BP. 
In the CR3BP, the Hamiltonian is invariant under the following \emph{reversing symmetries}, which reverse the direction of the Hamiltonian flow:
\begin{align}
r_y(q_1, q_2, q_3, p_1, p_2, p_3) &= (q_1, -q_2, q_3, -p_1, p_2, -p_3), \label{eqn:symmetry-y} \\
r_{yz}(q_1, q_2, q_3, p_1, p_2, p_3) &= (q_1, -q_2, -q_3, -p_1, p_2, p_3). \label{eqn:symmetry-yz}
\end{align}
In the Hill's problem, two additional reversing symmetries appear:
\begin{align}
r_x(q_1, q_2, q_3, p_1, p_2, p_3) &= (-q_1, q_2, q_3, p_1, -p_2, -p_3), \label{eqn:symmetry-x} \\
r_{xz}(q_1, q_2, q_3, p_1, p_2, p_3) &= (-q_1, q_2, -q_3, p_1, -p_2, p_3). \label{eqn:symmetry-xz}
\end{align}
These symmetries generate a symmetry group of order $8$,~\cite{aydin_symmetries_2023}, including the pure symmetry $r_z = r_y \circ r_{yz} = r_x \circ r_{xz}$ which preserves the direction of the flow:
\[
r_z(q_1, q_2, q_3, p_1, p_2, p_3) = (q_1, q_2, -q_3, p_1, p_2, -p_3).
\]
Note that the additional symmetries~\eqref{eqn:symmetry-x} and~\eqref{eqn:symmetry-xz} also appear in the CR3BP in the special case $\mu=1/2$.

\begin{figure}[!ht]
\centering
\includegraphics[width=0.6\textwidth]{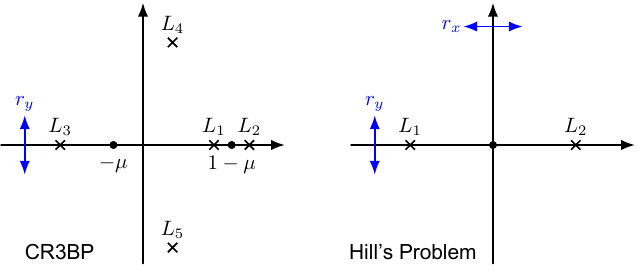}
\caption{Configuration of the Lagrange points and reflection symmetries in the CR3BP (left) and Hill's problem (right). In both models, the reflection symmetry $r_y$ across the plane $y=0$ is present, while the Hill's problem exhibits additional symmetry $r_x$ across the plane $x=0$. Only $L_1$ and $L_2$ Lagrange points remain in Hill's problem.}
\label{fig:cr3bp-hill}
\end{figure}

\subsection{Rescaled CR3BP and the Hill's Limit}  
\label{sec:rescaling}
We now introduce a rescaled CR3BP Hamiltonian $\hat{H}_\mu$ which converges to Hill's problem as $\mu \to 0$. The derivation follows the descriptions in~\cite{meyer_introduction-2ed_2009, frauenfelder_restricted_2018} and proceeds through two coordinate transformations.  
First, we translate the origin to the position of the light primary:  
\begin{equation*}
    T(\mathbf{q}, \mathbf{p}) = (\mathbf{q}, \mathbf{p}) + \mathbf{m}_\mu, 
    \quad \mathbf{m}_\mu = (1 - \mu, 0, 0, 0, 1 - \mu, 0).
\end{equation*}  
Next, we apply the \emph{symplectic scaling} from~\cite{meyer-schmidt_hills_1982}
\begin{equation*}
    \phi(\mathbf{q}, \mathbf{p}) = (\mu^{1/3} \mathbf{q}, \mu^{1/3} \mathbf{p}),
\end{equation*}  
which rescales the Hamilton equations by a constant factor; the \emph{rescaled CR3BP Hamiltonian} is then  
\begin{equation*}
    \hat{H}_\mu(\mathbf{q}, \mathbf{p}) 
    = \mu^{-2/3}\left( (H_\mu \circ T)(\phi(\mathbf{q}, \mathbf{p})) + (1 - \mu) + \frac{(1 - \mu)^2}{2} \right),
\end{equation*}
which expands to  
\begin{equation}
    \label{eqn:rescaled-CR3BP-expanded}
    \hat{H}_\mu(\mathbf{q}, \mathbf{p}) = \frac{1}{2}|\mathbf{p}|^2 - \frac{1}{|\mathbf{q}|} + p_1 q_2 - p_2 q_1 
    - \frac{1 - \mu}{\mu^{2/3}}
    \left( \frac{1}{\sqrt{(\mu^{1/3} q_1 + 1)^2 + \mu^{2/3}(q_2^2 + q_3^2)}} 
    + \mu^{1/3} q_1 - 1 \right).
\end{equation}  

We use the hat ($\,\hat{}\,$) notation to indicate the Hamiltonian in scaled coordinates, and $h$ to denote the corresponding energy levels. The energies of the two systems are related by  
\begin{equation*}
    c = \mu^{2/3} h - (1 - \mu) - \frac{(1 - \mu)^2}{2},
\end{equation*}  
so that the energy surface $\hat{H}_\mu = h$ corresponds to $H_\mu = c$ under the transformation $\phi \circ T$ (see Figure~\ref{fig:energy-curves}). As $\mu \to 0$, we have $c \to -3/2$, and the Jacobi constants converge to $\Gamma = 3$.  

In the rescaled Hamiltonian~\eqref{eqn:rescaled-CR3BP-expanded}, the term in parentheses is $O(\mu^{2/3})$, and $\hat{H}_\mu$ is analytic in $\nu=\mu^{1/3}$ at $\mu = 0$. Thus, the limit $\mu \to 0$ recovers the Hamiltonian for the Hill's problem~\eqref{eqn:hill-hamiltonian}. 
\begin{proposition}
    \label{prop:convergence}
    Let $\hat{X}_\mu$ be the Hamiltonian vector field of the rescaled CR3BP $\hat{H}_\mu$. For any $\mu_0 \in (0, 1]$, the family $\mu \mapsto \hat{X}_\mu$, $\mu\in[0, \mu_0)$ defines a one-parameter analytic perturbation (in $\nu = \mu^{1/3}$) of the Hill's problem $\hat{X}_0$, in the region $q_1 > -\mu_0^{-1/3}$.
\end{proposition}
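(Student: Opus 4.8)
The plan is to verify that, after substituting $\nu=\mu^{1/3}$ (and writing $\nu_0:=\mu_0^{1/3}$), the rescaled Hamiltonian $\hat H_\mu$ extends to an analytic function of $(\nu,q,p)$ on a (complex) neighborhood of $[0,\nu_0]\times\{q_1>-\nu_0^{-1}\}\times\R^3$ with $\hat H_\mu|_{\nu=0}=\hat H_0$; since $(\partial_p,-\partial_q)$ preserves joint analyticity, the conclusion then transfers to $\hat X_\mu$. Comparing \eqref{eqn:rescaled-CR3BP-expanded} with \eqref{eqn:hill-hamiltonian}, the only $\nu$-dependent term is
\[
F_\mu(q)=\frac{1-\nu^3}{\nu^{2}}\bigl(g(\nu,q)^{-1/2}+\nu q_1-1\bigr),\qquad g(\nu,q):=(\nu q_1+1)^2+\nu^2(q_2^2+q_3^2)=1+2\nu q_1+\nu^2|q|^2,
\]
with $\hat H_\mu-\hat H_0=-F_\mu+\bigl(\tfrac32 q_1^2-\tfrac12|q|^2\bigr)$. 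Since $F_\mu$ carries no $1/|q|$ term, the collision locus $q=0$ is irrelevant here, and the proposition reduces to showing that $F_\mu$ extends analytically in $(\nu,q)$ across $\nu=0$ with $F_0(q)=\tfrac32 q_1^2-\tfrac12|q|^2$.

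First I would pin down the domain by locating the zeros of $g$: one has $g(\nu,q)=0$ iff $\nu q_1+1=0$ and $\nu^2(q_2^2+q_3^2)=0$, which is impossible at $\nu=0$ and, for $\nu>0$, forces $q=(-1/\nu,0,0)$ — the rescaled position of the heavy primary — with $q_1=-1/\nu\le-1/\nu_0$. Hence $g>0$ on $[0,\nu_0]\times\{q_1>-\nu_0^{-1}\}$, which is precisely why the region in the statement is $q_1>-\mu_0^{-1/3}$, uniformly over $\mu\le\mu_0$. For honest complex analyticity in $\nu$ I would observe that the roots of the quadratic $\nu\mapsto g(\nu,q)$, namely $(-q_1\pm i\sqrt{q_2^2+q_3^2})/|q|^2$ (degenerating to a double real root on the $q_1$-axis), stay bounded away from $[0,\nu_0]$ locally uniformly in $q$, so $g(\nu,q)^{-1/2}$ has a uniform radius of convergence about each point of $[0,\nu_0]$.

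The one genuinely delicate point is the apparent $\nu^{-2}$ singularity of $F_\mu$ at $\nu=0$, which must be absorbed by a double zero of the bracket. I would check this via the expansion
\[
g(\nu,q)^{-1/2}=1-\nu q_1+\nu^2\bigl(\tfrac32 q_1^2-\tfrac12|q|^2\bigr)+O(\nu^3),
\]
so that $g^{-1/2}+\nu q_1-1=\nu^2 S(\nu,q)$ with $S$ analytic and $S(0,q)=\tfrac32 q_1^2-\tfrac12|q|^2$; more carefully, Taylor's theorem with integral remainder writes the bracket as $\nu^2\int_0^1(1-t)\,\partial_\nu^2\bigl(g^{-1/2}\bigr)(t\nu,q)\,dt$, whose integrand is analytic by the previous step, so $S$ — and hence $F_\mu=(1-\nu^3)S(\nu,q)$ — is analytic in $(\nu,q)$ on a neighborhood of $[0,\nu_0]\times\{q_1>-\nu_0^{-1}\}$, with $F_0=S(0,\cdot)$ as needed.

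Assembling the pieces, $\hat H_\mu-\hat H_0=S(0,q)-(1-\nu^3)S(\nu,q)$ is analytic in $(\nu,q,p)$ near $\{0\}\times\{q_1>-\nu_0^{-1}\}\times\R^3$ and vanishes at $\nu=0$; applying $(\partial_p,-\partial_q)$ and factoring out the zero at $\nu=0$ gives $\hat X_\mu=\hat X_0+\nu\,\hat Y(\nu,q,p)$ with $\hat Y$ analytic, uniformly on $q_1>-\mu_0^{-1/3}$, which is exactly the asserted one-parameter analytic perturbation in $\nu=\mu^{1/3}$. I expect the removable-singularity step to be the main (indeed essentially the only non-bookkeeping) obstacle; the positivity of $g$ that fixes the region $q_1>-\mu_0^{-1/3}$, the analyticity of the mass factor $1-\mu=1-\nu^3$, and the passage from $\hat H$ to $\hat X$ are all routine.
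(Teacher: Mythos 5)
Your proof is correct, and its overall decomposition matches the paper's: you isolate the single $\nu$-dependent term, locate the zero set of $g(\nu,q)=1+2\nu q_1+\nu^2|q|^2$ to justify the region $q_1>-\mu_0^{-1/3}$ uniformly for $\mu\le\mu_0$, and cancel the apparent $\nu^{-2}$ singularity using the double zero of $g^{-1/2}+\nu q_1-1$ at $\nu=0$. The difference lies in how that cancellation is carried out. The paper does it by the explicit algebraic identity
\[
\frac{1}{\sqrt{1+x}}-1+\frac{x}{2}
=x^2\,\frac{2+\sqrt{1+x}}{2\sqrt{1+x}\,\bigl(1+\sqrt{1+x}\bigr)^2},
\qquad x>-1,
\]
applied with $x=2q_1\nu+|q|^2\nu^2$ (the leftover $-\tfrac{1}{2}\nu^2|q|^2$ being absorbed into the $\tfrac{1-\mu}{2}|q|^2$ term), which factors out the double zero in closed form and produces the manifestly $\nu$-analytic expression~\eqref{eqn:rescaled_cr3bp} of Appendix~\ref{sec:appendix_rescaled_ham}; this simultaneously proves the proposition and supplies the numerically stable formula actually used for small $\mu$. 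Your route instead uses Taylor's theorem with integral remainder (plus the complex non-vanishing of $g$ near $[0,\nu_0]$ to get honest analyticity of $g^{-1/2}$ in $\nu$), which is softer: it needs no special identity and would transfer unchanged to other scalings with the same structure, at the cost of not yielding a closed-form expression for computation. One small phrasing caveat: $\hat H_\mu$ itself is not analytic at the collision $q=0$, so the precise claim is that the difference $\hat H_\mu-\hat H_0$ (equivalently $\hat X_\mu-\hat X_0$) extends analytically in $(\nu,q,p)$ on the stated region -- which is exactly what you establish once you observe that $F_\mu$ contains no $1/|q|$ term, so this is a matter of wording rather than a gap.
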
  

This rescaling is useful because it provides a perturbative framework where solutions of the Hill's problem continue into the CR3BP. While the expression~\eqref{eqn:rescaled-CR3BP-expanded} is not numerically practical due to the singular denominator $\mu^{2/3}$, we provide in Appendix~\ref{sec:appendix_rescaled_ham} an equivalent formulation suitable for numerical computations.

\begin{figure}[!ht]
\centering
\includegraphics[width=0.6\textwidth]{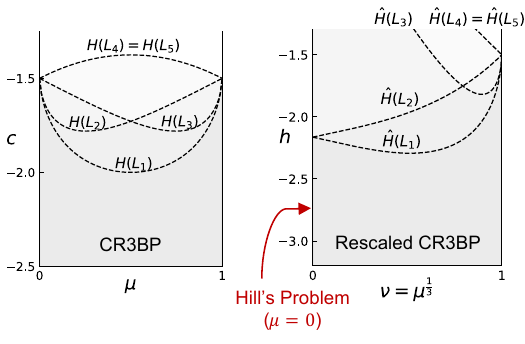}
\caption{Energy levels of the five Lagrange points in the standard CR3BP $H_\mu$ (left) and in the rescaled CR3BP $\hat{H}_\mu$ (right) as a function of the mass ratio $\mu\in [0,1]$. In the limit $\mu\to 0$, the rescaled Hamiltonian corresponds to the Hill's problem $\hat{H}_0$, in which only the two collinear Lagrange points $L_1$ and $L_2$ remain, both at the same energy level.}
\label{fig:energy-curves}
\end{figure}

\begin{remark}
\label{rmk:rotating-kepler}
When $\mu = 1$, the rescaled Hamiltonian $\hat{H}_\mu$ corresponds to the \emph{rotating Kepler problem}. Thus, the rescaled CR3BP interpolates between Hill's problem ($\mu \to 0$) and the rotating Kepler problem ($\mu \to 1$) in the vicinity of the light primary. 
\end{remark}

\subsection{Non-degeneracy of Periodic and Symmetric Orbits}

In the most general setting, we consider a $T$-periodic solution $\gamma(t)$ of an autonomous Hamiltonian $H$, where $\gamma$ lies on the energy level $H = h$. 
Let $\Sigma=H^{-1}(h)$ denote the energy hypersurface containing $\gamma$. Let $\psi:S\to S$ be the Poincar\'e return map on a surface of section $S\subset \Sigma$, with $\psi(\gamma(0)) = \gamma(T)$. 

We define the appropriate non-degeneracy condition for such orbits as follows.

\begin{definition}
    A periodic orbit $\gamma$ of a Hamiltonian system is said to be \emph{non-degenerate} if the linearized return map satisfies $\det(d\psi - \id) \neq 0$, i.e., the differential $d\psi$ has no eigenvalue equal to $1$.
\end{definition}

This condition can be checked numerically using the eigenvalues of the \emph{monodromy matrix} $\Phi = d\phi^T(\gamma(0))$, where $\phi^t$ denotes the time $t$-flow of the Hamiltonian vector field $X_H$. These eigenvalues are referred to as the \emph{Floquet multipliers}.

\begin{lemma}
    A periodic orbit $\gamma$ is non-degenerate if and only if its monodromy matrix $\Phi$ has exactly two Floquet multipliers equal to $1$.
\end{lemma}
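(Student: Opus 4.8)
The plan is to exhibit the standard block upper-triangular form of the monodromy matrix $\Phi$ with respect to a splitting of the tangent space of the phase space at $\gamma(0)$ adapted to the orbit, and then read off the multiplicity of the eigenvalue $1$ from the resulting factorization of the characteristic polynomial. This reduces the claim to elementary linear algebra once two structural facts about $\Phi$ are in place.

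First I would record those two facts. Since $\gamma$ is $T$-periodic, $\phi^T(\gamma(s)) = \gamma(s)$ for all $s$; differentiating at $s=0$ gives $\Phi\,X_H(\gamma(0)) = X_H(\gamma(0))$, so the flow direction is a right eigenvector of $\Phi$ for the eigenvalue $1$. Since $H$ is conserved, $H\circ\phi^t = H$, and differentiating at $\gamma(0)$ gives $dH(\gamma(0))\circ\Phi = dH(\gamma(0))$; in particular $\Phi$ maps the hyperplane $T_{\gamma(0)}\Sigma = \ker dH(\gamma(0))$ to itself. (One could instead invoke that $\Phi$ is symplectic, but only periodicity and energy conservation are needed.)

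Next I would fix a complement $\R v$ of $T_{\gamma(0)}\Sigma$ and a complement $T_{\gamma(0)}S$ of the flow line $\R\,X_H(\gamma(0))$ inside $T_{\gamma(0)}\Sigma$, so that the tangent space of the phase space at $\gamma(0)$ splits as $\R\,X_H(\gamma(0)) \oplus T_{\gamma(0)}S \oplus \R v$ (here I use that $S\subset\Sigma$ is a section transverse to the flow). In this basis $\Phi$ fixes $X_H(\gamma(0))$ and preserves $T_{\gamma(0)}\Sigma$, hence it is block upper triangular,
\[
\Phi \;=\; \begin{pmatrix} 1 & * & * \\ 0 & A & * \\ 0 & 0 & e \end{pmatrix},
\]
where the middle block $A$ is precisely $d\psi$: by construction the Poincar\'e return map $\psi$ is the map induced by the time-$T$ flow on $S$, i.e.\ on $T_{\gamma(0)}\Sigma$ modulo the flow line $\R\,X_H(\gamma(0))$. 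Evaluating the left-eigenvector relation $dH(\gamma(0))\circ\Phi = dH(\gamma(0))$, with $dH(\gamma(0))$ vanishing on the first two summands and nonzero on $v$, forces $e = 1$.

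From this form the characteristic polynomial factors as $\det(\lambda\,\id - \Phi) = (\lambda-1)^2\,\det(\lambda\,\id - d\psi)$, so the algebraic multiplicity of $1$ as an eigenvalue of $\Phi$ equals $2$ plus its algebraic multiplicity as an eigenvalue of $d\psi$. Therefore $\Phi$ has exactly two eigenvalues equal to $1$ (counted with multiplicity) if and only if $1$ is not an eigenvalue of $d\psi$, i.e.\ $\det(d\psi - \id)\neq 0$, which is the definition of non-degeneracy. The one point I would be careful to state explicitly — and the only thing beyond bookkeeping — is that ``exactly two eigenvalues equal to $1$'' must be understood with algebraic multiplicity: even for a non-degenerate $\gamma$ the off-diagonal entries in the first row of $\Phi$ generally do not vanish, so $\Phi$ typically has a single $2\times 2$ Jordan block at $1$ rather than two independent eigenvectors. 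With that convention fixed, the equivalence follows.
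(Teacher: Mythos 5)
Your proof is correct and complete, including the important caveat that ``exactly two eigenvalues equal to $1$'' is counted with algebraic multiplicity (the generic $2\times 2$ Jordan block at $1$ coming from the period--energy coupling). The paper gives no argument of its own here --- it simply cites Lemma~2.3 of the Moser--Zehnder notes --- and your block-upper-triangular decomposition, using $X_H(\gamma(0))$ as a right eigenvector and $dH(\gamma(0))$ as a left eigenvector with the middle block identified with $d\psi$, is precisely the standard argument behind that citation.
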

\begin{proof}
    See~\cite[Lemma 2.3]{moser_zehnder_notes_2005}.
\end{proof}

We now introduce a suitable non-degeneracy condition for symmetric periodic orbits.  
Let $r$ be a \emph{reversing symmetry} of the Hamiltonian, satisfying
\[
H \circ r = H, \qquad  r \circ \phi^t = \phi^{-t} \circ r,
\]
where $\phi^t$ denotes the Hamiltonian flow.  
We consider the case where $r$ is an involution, i.e.,\ $r^2 = \mathrm{id}$.  
The fixed-point set
\[
\mathrm{Fix}(r) = \{ x \in \mathbb{R}^6 \mid r(x) = x \}
\]
has half the dimension of the phase space. 
An \emph{$r$-symmetric periodic orbit} of period $T$ satisfies
\[
r(\gamma(t)) = \gamma(-t), \qquad \text{so that} \quad \gamma(0),\, \gamma(T/2) \in \mathrm{Fix}(r).
\]

Let $S\subset \Sigma$ be a surface of section such that $r(S)=S$ and $\mathrm{Fix}(r)\cap \Sigma \subset S$, and assume
\[
\psi(\gamma(0)) = \gamma(T/2).
\]
At each $x \in \mathrm{Fix}(r)\cap \Sigma$, the tangent space decomposes into the eigenspaces of $dr$:
\[
T_x S = E_1 \oplus E_{-1}, \qquad \left(dr|_S\right)|_{E_{\pm 1}} = \pm \mathrm{id},
\]
where $E_1$ corresponds to directions tangent to $\mathrm{Fix}(r)\cap \Sigma$.

\begin{definition}
The symmetric orbit $\gamma$ is \emph{non-degenerate as an $r$-symmetric periodic orbit} if for every nonzero $v \in E_1$,
\[
(d\psi(v))|_{E_{-1}} \neq 0,
\]
i.e.,\ the image of $E_1$ under $d\psi$ has a nontrivial component in $E_{-1}$.
\end{definition}

\begin{lemma}
    \label{lem:symm-non-deg}
If a symmetric periodic orbit $\gamma$ is non-degenerate (i.e.,\ $\det(d\psi^2 - \mathrm{id}) \neq 0$), then it is also non-degenerate as an $r$-symmetric orbit.
\end{lemma}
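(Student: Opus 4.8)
The plan is to reduce the claim to a short piece of linear algebra once one has extracted the only nontrivial input: the rigid relation, forced by the reversing symmetry $r$, between the linearized return map at the two symmetric points $\gamma(0)$ and $\gamma(T/2)$. Set $a:=\gamma(0)$ and $b:=\gamma(T/2)$; both lie in $\mathrm{Fix}(r)\cap\Sigma\subset S$. By hypothesis $\psi(a)=b$, and since $\gamma$ hits $S$ consecutively at $a$ then $b$ (implicit in this hypothesis), the reversing relation $r\circ\phi^t=\phi^{-t}\circ r$ together with $T$-periodicity and $r(S)=S$ also gives $\psi(b)=a$. Hence $\psi$ restricts to a germ of diffeomorphism from a neighbourhood of $a$ in $S$ onto a neighbourhood of $b$ and conversely, and $\psi^2$ is a local diffeomorphism fixing $a$ with $d\psi^2|_a=d\psi|_b\circ d\psi|_a$. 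Write $E_1^a,E_{-1}^a\subset T_aS$ and $E_1^b,E_{-1}^b\subset T_bS$ for the $\pm1$-eigenspaces of the involutions $dr|_a$ and $dr|_b$ (these restrictions make sense because $r(S)=S$), with $E_1$ tangent to $\mathrm{Fix}(r)\cap\Sigma$, so that $\dim E_1=\dim E_{-1}=\tfrac12\dim S$.

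The key step is the identity $r\circ\psi=\psi^{-1}\circ r$ on a neighbourhood of $a$ in $S$. For $x$ near $a$ write $\psi(x)=\phi^{\tau(x)}(x)$ with $\tau(x)$ the first forward return time of $x$ to $S$. Using $r\circ\phi^t=\phi^{-t}\circ r$ we get $r(\psi(x))=\phi^{-\tau(x)}(r(x))$, and since $\phi^{-t}(r(x))=r(\phi^t(x))$ and $r(S)=S$, the set of $t>0$ with $\phi^{-t}(r(x))\in S$ coincides with the set of $t>0$ with $\phi^t(x)\in S$; hence $\tau(x)$ is also the first backward return time of $r(x)$, so $\phi^{-\tau(x)}(r(x))$ is the first backward intersection of $r(x)$ with $S$, which is precisely $\psi^{-1}(r(x))$. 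Differentiating $r\circ\psi=\psi^{-1}\circ r$ at $a$ (using $r(a)=a$, $\psi(a)=b$, $\psi^{-1}(a)=b$) gives $dr|_b\circ d\psi|_a=(d\psi|_b)^{-1}\circ dr|_a$, equivalently
\[
dr|_a=d\psi|_b\circ dr|_b\circ d\psi|_a .
\]

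Now restate the $r$-symmetric non-degeneracy condition: since $d\psi|_a\colon T_aS\to T_bS$ is a linear isomorphism, ``$(d\psi(v))_{E_{-1}}\neq0$ for every nonzero $v\in E_1^a$'' is the same as $d\psi|_a(E_1^a)\cap E_1^b=\{0\}$. Argue by contrapositive: suppose this intersection is nonzero, so there is $0\neq v\in E_1^a$ with $w:=d\psi|_a(v)\in E_1^b$. Then $dr|_a v=v$ and $dr|_b w=w$, and the displayed identity yields
\[
v=dr|_a v=d\psi|_b\bigl(dr|_b(d\psi|_a v)\bigr)=d\psi|_b(dr|_b w)=d\psi|_b(w)=\bigl(d\psi|_b\circ d\psi|_a\bigr)(v)=d\psi^2|_a(v),
\]
so $d\psi^2|_a$ has eigenvalue $1$, i.e.\ $\det(d\psi^2-\mathrm{id})=0$, contradicting non-degeneracy of $\gamma$ as a periodic orbit. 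This proves the lemma.

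The main obstacle is the honest justification of $r\circ\psi=\psi^{-1}\circ r$: one has to track forward versus backward first-return times under $r$ carefully, and one must use that $\gamma(0)$ and $\gamma(T/2)$ are consecutive intersections of $\gamma$ with $S$ so that the germs of $\psi$ at $a$ and at $b$, their inverses, and the composite $\psi^2$ are all well-defined and compose as claimed; this consecutiveness is built into the standing hypotheses $\psi(\gamma(0))=\gamma(T/2)$ and $\mathrm{Fix}(r)\cap\Sigma\subset S$. Given that relation together with the dimension count $\dim E_1=\dim E_{-1}$, everything else is the elementary computation above.
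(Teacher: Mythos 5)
Your proof is correct and is essentially the paper's argument: the paper also argues by contradiction, using the relation $\psi\circ r\circ\psi=r$ (equivalent to your $r\circ\psi=\psi^{-1}\circ r$) to show that a nonzero $v\in E_1$ with $d\psi(v)\in E_1$ satisfies $d\psi^2(v)=v$, contradicting $\det(d\psi^2-\mathrm{id})\neq 0$. The only difference is that you carefully derive the reversibility relation for the return map from first-return times and keep track of the base points $\gamma(0)$ and $\gamma(T/2)$, which the paper leaves implicit.
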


\begin{proof}
Suppose instead that $d\psi(v) \in E_1$ for some $v \in E_1\setminus \{0\}$, so that $(dr\circ d\psi)(v) = d\psi(v)$. Since $\psi \circ r \circ \psi = r$, we have
\[
d\psi^2(v) = (d\psi \circ dr \circ d\psi)(v) = dr(v) = v.
\]
Thus $v$ is an eigenvector of $d\psi^2$ with eigenvalue $1$, implying that the periodic orbit is degenerate, which is a contradiction.
\end{proof}

\subsection{Perturbation of Periodic Orbits from Hill's Problem to CR3BP}
\label{sec:perturbation}

Applying Poincar\'e's perturbation theory of periodic orbits, we describe how non-degenerate periodic orbits of the Hill's problem persist in the CR3BP for small mass ratios. A closely related approach is the method of \emph{generating solutions} studied in the works of Perko, H\'enon, Bruno, and Varin,~\cite{perko_periodic_1983,henon_generating_1997,bruno-varin_periodic_2007}, which continues planar symmetric orbits of Hill's problem to the CR3BP within the perturbative framework.
Here, we state a general perturbative result (Theorem~\ref{thm:C0_perturbation}) on fixed energy surfaces, with leading-order approximations which will be used for numerical continuation of these solutions.

\begin{theorem}
    \label{thm:C0_perturbation}
    Let $\gamma_0$ be a non-degenerate periodic orbit of the Hill's problem (Hamiltonian $\hat{H}_0$) with period $T_0$ and energy $\hat{H}_0 = h$. Then, for sufficiently small $\mu > 0$, there exists a smooth family of periodic orbits $\gamma_\mu$ of the CR3BP with the following properties:
    \begin{enumerate}
        \item The location $x_\mu$ of $\gamma_\mu$ on a surface of section satisfies:
        \[
        x_\mu - m_\mu = \mu^{-1/3} x_0 + O(\mu^{2/3}),
        \]
        where $m_\mu = (1 - \mu, 0, 0, 0, 1 - \mu, 0)$.
        \item The period of $\gamma_\mu$ satisfies:
        $
        T_\mu = T_0 + O(\mu^{1/3}).
        $
        \item The Jacobi energy of $\gamma_\mu$ is given by:
        \[
        c(\mu) = \mu^{2/3} h - (1 - \mu) - \frac{(1 - \mu)^2}{2}.
        \]
        \item The Floquet multipliers of $\gamma_\mu$ change continuously with respect to $\mu$.
        \item If the orbit $\gamma_0$ is symmetric with respect to~\eqref{eqn:symmetry-y} or~\eqref{eqn:symmetry-yz}, then the perturbed orbit $\gamma_\mu$ is also symmetric.
    \end{enumerate}
\end{theorem}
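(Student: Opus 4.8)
The plan is to prove Theorem~\ref{thm:C0_perturbation} by Poincar\'e's continuation method: reduce the existence of $\gamma_\mu$ to a fixed-point equation for a Poincar\'e return map on a single energy level, and solve it with the implicit function theorem, using the analytic dependence of $\hat X_\mu$ on $\nu=\mu^{1/3}$ (Proposition~\ref{prop:convergence}) together with the non-degeneracy of $\gamma_0$.

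First I would fix $\mu_0>0$ small enough that a tubular neighborhood of the (compact) image of $\gamma_0$ stays in the region $q_1>-\mu_0^{-1/3}$ of Proposition~\ref{prop:convergence}; then $\hat\phi_\mu^t$, the flow of $\hat H_\mu$, is analytic in $(\nu,t,x)$ near $\gamma_0$ for all $\mu\in[0,\mu_0]$. Put $x_0=\gamma_0(0)$ and choose an affine hyperplane $S=\{\ell=0\}$ through $x_0$ transverse to $\hat X_0(x_0)$, chosen so that $\gamma_0$ first returns to $S$ at time $T_0$ (otherwise pass to the appropriate iterate); when $\gamma_0$ is symmetric with respect to $r=r_y$ or $r=r_{yz}$ of~\eqref{eqn:symmetry-y}--\eqref{eqn:symmetry-yz}, take $x_0\in\mathrm{Fix}(r)$ and $\ell(x)=\langle x-x_0,\hat X_0(x_0)\rangle$, so that $r(S)=S$ --- here one uses that $r$ is a linear orthogonal involution and that $dr_{x_0}\hat X_0(x_0)=-\hat X_0(x_0)$ for a reversing symmetry fixing $x_0$. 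By transversality and analytic dependence on parameters there is a first-return time $\tau_\mu$ and a return map $\Psi_\mu(x)=\hat\phi_\mu^{\tau_\mu(x)}(x)$ on a neighborhood of $x_0$ in $S$, analytic in $(\nu,x)$ and preserving $\hat H_\mu$. Since $d\hat H_0|_{T_{x_0}S}\neq 0$, for small $\mu$ I can introduce coordinates $(e,y)\in\mathbb{R}\times\mathbb{R}^4$ on $S$ near $x_0$ with $e=\hat H_\mu$ (analytic in $\nu$), in which $\Psi_\mu(e,y)=(e,\psi_\mu(e,y))$; finding a $\hat H_\mu=h$ periodic orbit near $\gamma_0$ then amounts to solving $F(\nu,y):=\psi_{\nu^3}(h,y)-y=0$ near $y=y_0$, the $y$-coordinate of $x_0$.

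Now $F(0,y_0)=0$ because $\gamma_0$ is a fixed point of the energy-$h$ return map, while the non-degeneracy hypothesis is exactly the statement that $D_yF(0,y_0)=d\psi_0-\mathrm{id}$ is invertible; so the implicit function theorem yields a unique analytic branch $\nu\mapsto y(\nu)$ with $y(0)=y_0$ and $F(\nu,y(\nu))=0$, hence a family $\gamma_\mu$, smooth in $\mu>0$ and analytic in $\nu$ near $0$ --- this is existence. Claim~(3) holds by construction, since $\gamma_\mu\subset\{\hat H_\mu=h\}$ and the displayed $c(\mu)$ is just the energy relation of Section~\ref{sec:rescaling}. Claim~(2) follows since the period of $\gamma_\mu$ is $\tau_\mu(x_\mu)$, analytic in $\nu$ with value $T_0$ at $\nu=0$, so $T_\mu=T_0+O(\mu^{1/3})$; undoing the symplectic scaling $\phi\circ T$ converts the analytic expansion $x_\mu^{\mathrm{scaled}}=x_0+O(\nu)$ into the stated leading-order expansion of $x_\mu-m_\mu$ in $\mu^{1/3}$, giving~(1). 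For~(4), the monodromy matrix $\Phi_\mu=d\hat\phi_\mu^{T_\mu}(x_\mu)$ is continuous in $\nu$ (through $\hat\phi_\mu^t$, $T_\mu$, $x_\mu$), hence in $\mu\geq 0$, and eigenvalues depend continuously on the matrix.

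Finally, for~(5): $m_\mu$ translates only in the $q_1$- and $p_2$-directions, which are fixed by $r_y$ and $r_{yz}$, so both symmetries descend to $\hat H_\mu$; with the symmetric section above one gets $\Psi_\mu\circ r=r\circ\Psi_\mu^{-1}$, hence if $x_\mu$ is a fixed point of $\Psi_\mu$ so is $r(x_\mu)$, and uniqueness in the implicit function theorem forces $r(x_\mu)=x_\mu$; since a periodic orbit through a point of $\mathrm{Fix}(r)$ of a reversing symmetry is $r$-symmetric, (5) follows. (Alternatively one runs the whole argument as a half-period shooting problem on $\mathrm{Fix}(r)\cap\{\hat H_\mu=h\}$, using Lemma~\ref{lem:symm-non-deg} in place of ordinary non-degeneracy; this is the formulation behind the differential-correction scheme of Appendix~\ref{sec:df_derivation}.) The step I expect to be the main obstacle is the bookkeeping around the $\mu$-varying energy surfaces $\hat H_\mu^{-1}(h)$: the return map naturally lives on a $\mu$-dependent domain, and one must pass to the energy-adapted coordinates $(e,y)$ (or augment the system with the energy as an auxiliary variable) to get a genuine fixed-domain equation to which the implicit function theorem applies, checking along the way that this can be done analytically in $\nu$ uniformly near $\gamma_0$.
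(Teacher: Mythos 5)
Your proposal is correct and follows essentially the same route as the paper: Poincar\'e continuation on the fixed energy level $h$ of the rescaled Hamiltonian $\hat H_\mu$, with the implicit function theorem applied in the analytic parameter $\nu=\mu^{1/3}$ (Proposition~\ref{prop:convergence}), then transforming back by $\phi\circ T$ to obtain (1)--(4) and continuity of the multipliers from smooth dependence of the linearized return map. The only divergence is in (5), where you argue via an $r$-invariant section and reflection-plus-uniqueness of the fixed point, whereas the paper invokes Lemma~\ref{lem:symm-non-deg} and reruns the implicit function theorem in the symmetric (half-period) setting --- the alternative you mention parenthetically, which also covers orbits that are only non-degenerate as symmetric orbits.
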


\begin{proof}
    We apply perturbation theory of periodic orbits (see~\cite[Theorem 2.2]{moser_zehnder_notes_2005} or~\cite[Section 9.1]{meyer_introduction-2ed_2009}) to the fixed energy surface $\hat{H}_\mu^{-1}(h)$ of the rescaled CR3BP Hamiltonian $\hat{H}_\mu$.
    Let $\nu = \mu^{\frac{1}{3}}$, and consider a Poincar\'e return map $\psi(x;\nu)$ defined on a surface of section $S\subset \hat{H}_\mu^{-1}(h)$ through a point $x(0)$ on the orbit $\gamma_0$. We seek a family of fixed points $x(\nu)$ such that 
    \[x(\nu) = \psi(x(\nu); \nu).\] 
    By the non-degeneracy assumption, we have that
    \[ \frac{d\psi}{dx}(x(0); 0) - \textrm{id}  \]
    is non-singular. Thus, by the implicit function theorem, such $x(\nu)$ exists for small $\nu>0$ and is analytic with respect to $\nu$. 
    
    This gives a family $\hat{\gamma}_\mu$ of periodic solutions of the rescaled CR3BP Hamiltonian $\hat{H}_\mu$ for small $\nu$, starting from the original orbit $\gamma_0$ of the Hill's problem. 
    Its return time $T$ is also defined implicitly by $\phi^T(x;\nu)\in S$, where $\phi^T$ denotes the time-$T$ flow, and hence is analytic with respect to $x$ and $\nu$. 
    Transforming the family $\hat{\gamma}_\mu$ back to the original CR3BP coordinates gives the desired family $\gamma_\mu$.
    Continuity of the Floquet multipliers follows from the smooth dependence of the linearized return map $d\phi^T$ on $\nu$. 
    
    In the case where $\gamma_0$ is symmetric, Lemma~\ref{lem:symm-non-deg} ensures that it is non-degenerate as a symmetric periodic orbit.  
    The same implicit function theorem argument then yields a smooth family of symmetric periodic orbits $\gamma_\mu$.  
\end{proof}

\begin{remark}
The perturbation result also applies to symmetric periodic orbits that are non-degenerate as symmetric periodic orbits, even if they are degenerate as ordinary periodic orbits.  
An example of this will be discussed in Section~\ref{sec:collision-perturbation}.
\end{remark}

\subsection{Numerical Continuation with respect to Mass Parameter}

The leading-order approximations of Theorem~\ref{thm:C0_perturbation} provide initial guesses for numerically continuing periodic orbits when the mass ratio is small. This continuation can be carried out from Hill's problem to the CR3BP, or vice versa, or between CR3BP systems with different small mass ratios. We make the latter precise in the following corollary.
\begin{corollary}
\label{cor:scaling-conti}
    Consider two small mass ratios $\mu_1>\mu_2>0$. Let $\gamma_1$ be a periodic orbit in the CR3BP system $H_{\mu_1}$ with mass ratio $\mu_1$, with Jacobi energy $c_1$ and period $T_1$, assumed to be a continuation from Hill's problem as in Theorem~\ref{thm:C0_perturbation}. Then there exists a corresponding periodic orbit $\gamma_2$ for $H_{\mu_2}$:
    \begin{enumerate}
        \item The location $x_2$ of $\gamma_2$ on a surface of section satisfies:
        \[
        x_{2} - m_{2} = (x_{1} - m_{1}) \cdot \left( \frac{\mu_{2}}{\mu_{1}} \right)^{1/3} + O(\mu_{1}^{1/3} \mu_{2}^{1/3}),
        \]
        where $m_i = (1 - \mu_i, 0, 0, 0, 1 - \mu_i, 0)$ for $i=1,2$.
        \item The period is given by:
        $
        T_{2} = T_{1} + O(\mu_{1}^{1/3}).
        $
        \item The Jacobi energy is:
        \[
        c_2 = \mu_{2}^{2/3} h - (1 - \mu_{2}) - \frac{(1 - \mu_{2})^2}{2},
        \quad
        \textrm{where}
        \quad
        h = \mu_{1}^{-2/3} \left( c_1 + (1 - \mu_{1}) + \frac{(1 - \mu_{1})^2}{2} \right).
        \]
    \end{enumerate}
\end{corollary}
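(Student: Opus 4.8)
The plan is to run Theorem~\ref{thm:C0_perturbation} twice over a single common Hill's-problem orbit and then algebraically eliminate the Hill-problem data. By hypothesis, $\gamma_1$ is the $\mu_1$-member of the analytic continuation family furnished by Theorem~\ref{thm:C0_perturbation} applied to some non-degenerate periodic orbit $\gamma_0$ of Hill's problem; I would begin by naming the invariants of $\gamma_0$: its period $T_0$, its energy level $h$ (in the sense $\hat H_0 = h$ along $\gamma_0$), and its location $x_0$ on the surface of section. Note that to say $\gamma_1$ "is a continuation from Hill's problem as in Theorem~\ref{thm:C0_perturbation}" is precisely to fix such a $\gamma_0$, so these quantities are available as part of the given data, and parts (1)--(3) of that theorem hold at $\mu = \mu_1$ with exactly these $x_0$, $T_0$, $h$.

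Next I would produce $\gamma_2$. The smallness threshold in Theorem~\ref{thm:C0_perturbation} depends only on $\gamma_0$, and since $0 < \mu_2 < \mu_1$ lies below that threshold (because $\gamma_1$ exists), applying Theorem~\ref{thm:C0_perturbation} to the same $\gamma_0$ at $\mu = \mu_2$ yields a periodic orbit $\gamma_2 = \gamma_{\mu_2}$ of $H_{\mu_2}$ — this is the "corresponding" orbit, i.e.\ the one lying in the same continuation family — together with the expansions
\[
x_2 - m_2 = \mu_2^{1/3} x_0 + O(\mu_2^{2/3}), \qquad T_2 = T_0 + O(\mu_2^{1/3}), \qquad c_2 = \mu_2^{2/3} h - (1-\mu_2) - \tfrac12(1-\mu_2)^2,
\]
and the analogous expansions at $\mu = \mu_1$ for $\gamma_1$, with the same $x_0$, $T_0$, $h$. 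The assertions of the corollary now follow by elimination. For (3): solving the $\mu_1$-instance of Theorem~\ref{thm:C0_perturbation}(3) gives $h = \mu_1^{-2/3}\bigl(c_1 + (1-\mu_1) + \tfrac12(1-\mu_1)^2\bigr)$, and substituting this into the $c_2$-formula above reproduces (3) verbatim (this step is exact, with no error term). For (2): subtracting the two period expansions gives $T_2 = T_1 + O(\mu_1^{1/3}) + O(\mu_2^{1/3})$, and since $\mu_2 < \mu_1$ the second error is absorbed into the first, giving (2). For (1): solving the $\mu_1$-expansion for $x_0$ yields $x_0 = \mu_1^{-1/3}(x_1 - m_1) + O(\mu_1^{1/3})$, and substituting into the $\mu_2$-expansion gives
\[
x_2 - m_2 = (x_1 - m_1)\Bigl(\tfrac{\mu_2}{\mu_1}\Bigr)^{1/3} + O(\mu_1^{1/3}\mu_2^{1/3}) + O(\mu_2^{2/3});
\]
using $\mu_2 < \mu_1$ so that $\mu_2^{2/3} = \mu_2^{1/3}\mu_2^{1/3} \le \mu_1^{1/3}\mu_2^{1/3}$, the two errors combine into $O(\mu_1^{1/3}\mu_2^{1/3})$, which is (1).

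I do not expect a conceptual obstacle here — the corollary is essentially bookkeeping on top of Theorem~\ref{thm:C0_perturbation}. The one point to get right is the tracking of the $O(\cdot)$ terms through the $\mu_1^{-1/3}$ rescaling used to recover $x_0$: dividing the $O(\mu_1^{2/3})$ error in the $\mu_1$-expansion by $\mu_1^{1/3}$ must genuinely leave an $O(\mu_1^{1/3})$ contribution, which requires the implied constants in Theorem~\ref{thm:C0_perturbation} to be uniform near $\mu = 0$. This uniformity follows from the fact that the family $\hat\gamma_\mu$ in Theorem~\ref{thm:C0_perturbation} is analytic in $\nu = \mu^{1/3}$ on a closed interval, so its Taylor remainders in $\nu$ are controlled uniformly there; assembling these remainder estimates carefully is the main thing I would want to verify.
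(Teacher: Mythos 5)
Your proposal is correct and is essentially the argument the paper intends: the corollary is stated as an immediate consequence of Theorem~\ref{thm:C0_perturbation}, obtained exactly as you do by applying the theorem at $\mu_1$ and $\mu_2$ to the same generating orbit $\gamma_0$ and eliminating $x_0$, $T_0$, $h$, with the error terms combined using $\mu_2<\mu_1$. The uniformity of the implied constants you flag is indeed supplied by analyticity of the family in $\nu=\mu^{1/3}$ near $\nu=0$, as in the paper's proof of the theorem.
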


This scaling provides a practical method for predicting periodic orbits at a nearby mass ratio $\mu = \mu_{\text{prev}} + \Delta\mu$ from a known orbit at $\mu_{\text{prev}}$. The Jacobi energy $c$ is updated via the intermediary Hill energy $h$, computed by
\[
h = \mu_{\text{prev}}^{-\frac{2}{3}} \left( c_{\text{prev}} + (1 - \mu_{\text{prev}}) + \frac{(1 - \mu_{\text{prev}})^2}{2} \right), \quad
c = \mu^{\frac{2}{3}} h - (1 - \mu) - \frac{(1 - \mu)^2}{2}.
\]
The initial point is rescaled as
\[
X - m_\mu = (X_{\text{prev}} - m_{\mu_{\text{prev}}}) \cdot \left( \frac{\mu}{\mu_{\text{prev}}} \right)^{1/3},
\]
and the period remains unchanged to leading order: $T = T_{\text{prev}}$. The resulting prediction is then refined using a shooting method with Newton-type correction that ensures periodicity at the updated parameters. This process is summarized in Algorithm~\ref{alg:scaling-conti}. 
We use this continuation procedure at several stages of our computations to transfer known periodic orbits from the CR3BP or Hill's problem to different mass ratios, providing initial data for orbit continuation. It is also used in the computation of bifurcation surfaces, in particular the bifurcation graphs of halo and vertical collision orbits over the range $\mu \in [0,0.5]$ in Section~\ref{sec:bif-surface}.

\begin{algorithm}[!h]
\caption{Scaling continuation across small mass ratios}
\label{alg:scaling-conti}
\begin{algorithmic}[1]
\Require Periodic orbit $(X_{\text{prev}}, T_{\text{prev}}, c_{\text{prev}})$ at $\mu_{\text{prev}}$, step size $\Delta\mu$
\Ensure Periodic orbit $(X, T, c)$ at $\mu = \mu_{\text{prev}} + \Delta\mu$
\State $\mu \gets \mu_{\text{prev}} + \Delta\mu$\;
\State $h \gets \mu_{\text{prev}}^{-2/3} \left( c_{\text{prev}} + (1 - \mu_{\text{prev}}) + \frac{(1 - \mu_{\text{prev}})^2}{2} \right)$\;
\State $c \gets \mu^{2/3} h - (1 - \mu) - \frac{(1 - \mu)^2}{2}$\;
\State $X \gets m_\mu + (X_{\text{prev}} - m_{\mu_{\text{prev}}}) \cdot (\mu / \mu_{\text{prev}})^{1/3}$\;
\State $T \gets T_{\text{prev}}$\;
\State Correct $(X,T)$ using differential correction at fixed Jacobi energy $c$;
\State \Return $(X, T, c)$\;
\end{algorithmic}
\end{algorithm}

The correction at fixed Jacobi energy is implemented using a single-shooting scheme on a chosen Poincar\'e section. For clarity, we describe the case of the section $q_2=0$, but an analogous scheme on the section $p_3=0$ is used in Section~\ref{sec:w4/w5}.

Let $q_1,q_3,p_1,p_3$ be the coordinates of a predicted initial condition and $T$ be a predicted period.
The missing momentum $p_2$ is obtained by solving the quadratic equation $H(\mathbf{q},\mathbf{p})=c$ for $p_2$. We also compute its derivatives $\frac{\partial p_2}{\partial q_1},\cdots,\frac{\partial p_2}{\partial p_3}$.
The periodicity condition is expressed as finding zeros of the map
\[
F(q_1, q_3, p_1, p_3, T) = \pi_{(q_1, q_2, q_3, p_1, p_3)} \circ \phi^T(q_1, 0, q_3, p_1, p_2, p_3) - (q_1, 0, q_3, p_1, p_3)
\]
where $\phi^t$ denotes the Hamiltonian flow and $\pi$ is the projection onto the listed coordinates\footnote{The period $T$ is included in the map $F$, because the $k$-th arrival time at the Poincar\'e section may be discontinuous in the perturbation parameter.}.
Starting from an initial guess $X_0=(q_1, q_3, p_1, p_3, T)$, we apply Newton iterations
\[
X_{i+1} = X_i - [DF(X_i)]^{-1} F(X_i)
\]
until convergence, where a vector $X_i$ is considered to be a periodic orbit if $|F(X_i)|$ is sufficiently small. 
The computation of the Jacobian $DF(X_i)$ involves the linearized flow $d\phi^T$, which is obtained by integrating the first-order variational equations along the trajectory.

Note that for symmetric (and doubly symmetric) periodic orbits, we can exploit the standard simplification of integrating only half (or a quarter) of the period. The reduction for symmetric orbits is used throughout our computations. For brevity, we present the details of this simplification only in the Moser regularization setting in Section~\ref{sec:correction}.

\section{Moser Regularization}
\label{sec:moser}
In this section, we develop a numerical framework for detection and continuation of periodic orbits in the Moser regularized CR3BP. 
These methods will be essential for the numerical study of families of collision and near-collision spatial orbits and their bifurcation graphs.

\subsection{Moser's Transformation}
We describe Moser's transformation of coordinates $(\mathbf{q}, \mathbf{p})\in \mathbb{R}^3\times \mathbb{R}^3$ to the cotangent bundle $T^*S^3$ of the unit $3$-sphere:
\[
    T^*S^3 = \{(\boldsymbol{\xi}, \boldsymbol{\eta}) \in \mathbb{R}^4 \times \mathbb{R}^4 \, | \, |\boldsymbol{\xi}|^2 = 1,\ \boldsymbol{\xi}\cdot \boldsymbol{\eta} = 0 \}.
\] 
To define the transformation, we first switch the roles of position and momentum coordinates through the switch map $(\mathbf{x}, \mathbf{y}) = (-\mathbf{p}, \mathbf{q})$. 
Moser's transformation is then given by:
\begin{equation}
\label{eqn:moser_transf}
\begin{aligned}
&\xi_0 = \frac{|\mathbf{x}|^2 - 1}{|\mathbf{x}|^2 + 1},
\qquad
\xi_k = \frac{2x_k}{|\mathbf{x}|^2 + 1},
\qquad
k=1,2,3.
\\
&\eta_0 = \mathbf{x} \cdot \mathbf{y},
\qquad
\eta_k = \frac{|\mathbf{x}|^2+1}{2} y_k -(\mathbf{x} \cdot \mathbf{y}) x_k,
\qquad
k=1,2,3.
\end{aligned}
\end{equation}
Note that here the momentum $\mathbf{x}\in \mathbb{R}^3$ is mapped to the unit $3$-sphere $S^3\subset\mathbb{R}^4$ using the inverse stereographic projection, where the singularity $p=\infty$ corresponds to $\xi_0 = 1$, i.e., the north pole $N$ of the sphere.
Figure~\ref{fig:moser-reg} shows a schematic picture of the transformation.
The inverse of this transformation is given as follows: 
\[ 
q_k = y_k = \eta_0 \xi_k + (1 - \xi_0)\eta_k, 
\qquad p_k = -x_k = -\frac{\xi_k}{1 - \xi_0}, 
\qquad k=1,2,3. 
\]

\begin{figure}[htb]
    \centering
    \includegraphics[width=0.55\textwidth]{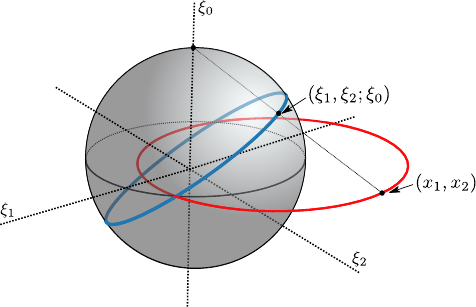}
   \caption{
    Schematic picture of Moser regularization, where the momentum curve (also known as a \emph{hodograph}) of an orbit is mapped to a unit sphere.
    The figure shows the case for the planar problem, where the unregularized momentum coordinates $(x_1, x_2)$ are mapped to $(\xi_0, \xi_1, \xi_2)$ on the sphere via inverse stereographic projection, with $\xi_0$ representing the vertical axis. 
    Collision orbits correspond to trajectories passing through the north pole $\xi_0 = 1$, and the position coordinates correspond to tangent vectors at each point.}
   \label{fig:moser-reg}
\end{figure}

\subsection{Moser Regularization of CR3BP and Hill's Problem}
\label{sec:moser-reg-ham}
In Moser's work,~\cite{moser_regularization_1970}, the transformation~\eqref{eqn:moser_transf} was introduced to regularize the singularity of the Kepler problem. Here, we consider the regularization of Hamiltonian systems of the form
\[ H(\mathbf{q},\mathbf{p}) = \frac{1}{2}|\mathbf{p}|^2 + p_1q_2 - p_2q_1 - \frac{g}{|\mathbf{q}|} + \tilde{V}(\mathbf{q}), \]
at the singularity at $\mathbf{q}=0$, where $g\neq 0$ is a constant and $\tilde{V}$ is analytic in a neighborhood of $\mathbf{q}=0$. 
For the CR3BP and Hill's problem, the corresponding values of $g$ and $\tilde{V}$ are given by
\begin{align*}
    \text{CR3BP:}\quad 
    g &= \mu, \quad
    \tilde{V}(\mathbf{q})
    = -\frac{1-\mu}{\sqrt{(q_1+1)^2 + q_2^2 + q_3^2}}
    - (1-\mu) q_1
    - \frac{(1-\mu)^2}{2}, \notag \\
    \text{Hill's problem:}\quad 
    g &= 1, \quad
    \tilde{V}(\mathbf{q})
    = \frac{1}{2}|\mathbf{q}|^2 - \frac{3}{2} q_1^2 .
\end{align*}
In the CR3BP case, the Hamiltonian has been translated by
\[
q_1 \mapsfrom q_1 - (1-\mu), 
\qquad 
p_2 \mapsfrom p_2 - (1-\mu),
\]
so that the collision singularity is located at the origin.
For a general description of the Moser regularization in Stark-Zeeman systems, see~\cite{cieliebak_J_invariant_2017}.

To define the regularized Hamiltonian, we first set
\begin{equation*}
    K(\mathbf{q},\mathbf{p}) = (H - c)|\mathbf{q}| = \left( \frac{1}{2}(|\mathbf{p}|^2 + 1) - (c + 1/2) + p_1q_2 - p_2q_1 + \tilde{V}(\mathbf{q}) \right)|\mathbf{q}| - g.
\end{equation*}
The energy level set $H=c$ now corresponds to $K=0$, and the dynamics on this energy surface is reparametrized by $X_K = |\mathbf{q}|X_H$. In the regularized coordinates $(\boldsymbol{\xi}, \boldsymbol{\eta})$, we have
\[
    K(\boldsymbol{\xi}, \boldsymbol{\eta}) = \left(
        1 - (1 - \xi_0)(c + 1/2) + (1 - \xi_0)(\xi_2\eta_1 - \xi_1\eta_2) + (1 - \xi_0)\tilde{V}(\boldsymbol{\xi}, \boldsymbol{\eta})
    \right)|\boldsymbol{\eta}| - g.
\]
Finally, we smooth this to define the \emph{Moser regularized Hamiltonian}
\[
    Q:T^*S^3\to \mathbb{R},\qquad Q(\boldsymbol{\xi}, \boldsymbol{\eta}) = \frac{1}{2g}(K(\boldsymbol{\xi}, \boldsymbol{\eta}) + g)^2.
\]
The energy level set $K=0$ corresponds to $Q=\frac{g}{2}$, where the dynamics is preserved via $X_Q = X_K$. The following summarizes the result.
\begin{proposition}
    A trajectory $\gamma(t)$ of the unregularized dynamics $X_H$ in energy level $H=c$ corresponds to the trajectory $\tilde{\gamma}(\tau)$ of the Moser regularized dynamics $X_Q$ in energy level $Q=\frac{g}{2}$ via the transformation~(\ref{eqn:moser_transf}) and time reparametrization $t=\int_0^\tau |\mathbf{q}(\tilde{\gamma}(s))| ds$.
\end{proposition}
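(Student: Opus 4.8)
The statement is really the concatenation of three elementary facts, each of which has essentially already been recorded in the paragraphs preceding the proposition, so the plan is to assemble them carefully and to be precise about what ``corresponds to'' means (namely: a reparametrization of the trajectory, transported through the change of coordinates). I would organize it as three steps: (i) pass from $\{H=c\}$ with vector field $X_H$ to $\{K=0\}$ with vector field $X_K$, a pure time reparametrization; (ii) transport $X_K$ through Moser's transformation~\eqref{eqn:moser_transf} (precomposed with the switch map $(x,y)=(p,-q)$) to a vector field on $T^*S^3$; (iii) identify, on $\{K=0\}$, the field $X_K$ with the field $X_Q$ of the smoothed Hamiltonian $Q=\tfrac{1}{2g}(K+g)^2$.

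For (i): since $K=(H-c)\,|q|$, the Leibniz rule for Hamiltonian vector fields gives $X_K=|q|\,X_H+(H-c)\,X_{|q|}$, which on the set $\{H=c\}$ (away from $q=0$) reduces to $X_K=|q|\,X_H$. Hence, if $\gamma$ solves $\dot\gamma=X_H(\gamma)$ and one sets $\tilde\gamma(\tau)=\gamma(t(\tau))$ with $dt/d\tau=|q(\tilde\gamma(\tau))|$, then $d\tilde\gamma/d\tau=X_K(\tilde\gamma)$; this is exactly the reparametrization $t=\int_0^\tau|q|\,d\tau$ claimed in the statement, and conversely every integral curve of $X_K$ in $\{K=0\}$ arises this way on time intervals avoiding $q=0$. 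For (iii): from $Q=\tfrac{1}{2g}(K+g)^2$ we get $dQ=\tfrac1g(K+g)\,dK$, hence $X_Q=\tfrac1g(K+g)\,X_K$; on $\{K=0\}$ the prefactor is $\tfrac1g\cdot g=1$, so $X_Q=X_K$ there, and $\{K=0\}$ is precisely the component of $\{Q=g/2\}$ on which $K+g=g$ (the other preimage being $K=-2g$). So integral curves of $X_Q$ in $\{Q=g/2\}$ through a point with $K=0$ coincide with integral curves of $X_K$ in $\{K=0\}$.

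For (ii): one checks that the composite $(q,p)\mapsto(x,y)=(p,-q)\mapsto(\xi,\eta)$ is a symplectomorphism from (the relevant open subset of) $\mathbb{R}^3\times(\mathbb{R}^3\setminus\{0\})$ onto its image in $T^*S^3$ minus the fiber over the north pole $N$ --- the switch map is symplectic since $dx\wedge dy=dq\wedge dp$, and the symplecticity of~\eqref{eqn:moser_transf} is Moser's original computation, which I would simply cite. Hamiltonian vector fields are natural under symplectomorphisms, so the pushforward of $X_K$ is the Hamiltonian vector field of $K$ written in $(\xi,\eta)$-coordinates, i.e.\ exactly the expression displayed just before the proposition. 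The point that lets the correspondence survive collision is that, although the coordinate change itself degenerates as $q\to0$ (this is $1-\xi_0\to0$), the function $K$ --- and hence $Q$ --- extends real-analytically across the fiber over $N$: here one uses the identity $|q|=(1-\xi_0)\,|\eta|$, which absorbs the vanishing factor $|q|$ into a smooth expression in $(\xi,\eta)$, so that $X_Q$ is a genuine smooth flow on a neighborhood of that fiber. Chaining (i)--(iii) then yields the stated correspondence between $\gamma$ on $\{H=c\}$ and $\tilde\gamma$ on $\{Q=g/2\}$ with $t=\int_0^\tau|q|\,d\tau$.

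The only genuinely delicate point --- and the step I would treat most carefully --- is the smooth extension across the collision fiber in (ii): one must verify that the bracketed factor times $|\eta|$ in $K(\xi,\eta)$, together with the $\tilde V$ term, is analytic on all of $T^*S^3$ (in particular where $1-\xi_0\to0$) and that neither the switch map nor $\tilde V$ introduces a spurious singularity, so that the collision orbit becomes an honest (periodic, after regularization) orbit of $X_Q$. Everything else is bookkeeping with Leibniz rules, level sets, and naturality of Hamiltonian vector fields.
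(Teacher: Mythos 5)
Your proposal is correct and follows essentially the same route as the paper, which states the proposition as a summary of the construction immediately preceding it: $X_K=|q|\,X_H$ on $\{H=c\}$ giving the reparametrization $t=\int_0^\tau|q|\,d\tau$, the transport of $K$ through the switch map and~\eqref{eqn:moser_transf}, and $X_Q=X_K$ on $\{K=0\}=\{Q=\tfrac{g}{2}\}$ since $dQ=\tfrac1g(K+g)\,dK$. Your added care about the analytic extension across the collision fiber (via $|q|=(1-\xi_0)|\eta|$ and the squaring in $Q$) is exactly the point the paper's construction is designed to handle, so there is no substantive difference.
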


The Moser regularization gives dynamics on the \emph{restricted phase space} $T^*S^3\subset\mathbb{R}^8$ defined by the two constraints:
\[
    f_1 = |\boldsymbol{\xi}|^2 - 1 = 0, \qquad f_2 = \boldsymbol{\xi}\cdot \boldsymbol{\eta} = 0.
\]
The corresponding Hamiltonian vector field on $T^*S^3$ is obtained as follows.
\begin{lemma}
    \label{lem:restricted_ham_vf_two}
The Hamiltonian vector field of $Q$ on $T^*S^3$ is given by
\[
X_Q = \tilde{X}_Q - \frac{\{ f_2, Q \}}{\{ f_2, f_1 \}} X_{f_1} - \frac{\{ f_1, Q \}}{\{ f_1, f_2 \}} X_{f_2},
\]
where $\tilde{X}_Q = (\partial_{\mathbf{p}} Q, -\partial_{\mathbf{q}} Q)$ denotes the Hamiltonian vector field of $Q$ in the ambient space $\mathbb{R}^4\times \mathbb{R}^4$.
\end{lemma}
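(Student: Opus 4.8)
The statement is an instance of Dirac's reduction for a pair of second-class constraints, and I would phrase it through the constrained-Hamiltonian formalism recalled in Appendix~\ref{sec:restricted-phase-space}. The first step is to record that $(f_1,f_2)$ is a non-degenerate (second-class) pair on $T^*S^3$: a direct computation gives
\[
\{f_1,f_2\} = \partial_\xi f_1\cdot\partial_\eta f_2 - \partial_\eta f_1\cdot\partial_\xi f_2 = 2\,\xi\cdot\xi = 2|\xi|^2,
\]
which equals $2$ on $\{f_1=0\}$ and in particular never vanishes there, so the denominators appearing in the asserted formula are well-defined. Consequently the ambient Hamiltonian vector fields $X_{f_1}$ and $X_{f_2}$ span the symplectic orthogonal $\bigl(T(T^*S^3)\bigr)^{\omega}$ along $T^*S^3$, and this orthogonal meets $T(T^*S^3)$ trivially (its Gram matrix is $\bigl(\{f_i,f_j\}\bigr)$, which is nondegenerate); hence $T\mathbb{R}^8|_{T^*S^3}=T(T^*S^3)\oplus\langle X_{f_1},X_{f_2}\rangle$. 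By definition $X_Q$ is the unique section of $T(T^*S^3)$ with $\iota_{X_Q}\omega=dQ$ on $T(T^*S^3)$, and this splitting will supply both existence and uniqueness.

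For the computation I would use the ansatz $X_Q=\tilde X_Q + a\,X_{f_1}+b\,X_{f_2}$ with $a,b$ functions to be determined. Since $df_1$ and $df_2$ vanish on $T(T^*S^3)$, adding any combination of $X_{f_1},X_{f_2}$ does not disturb the identity $\iota_{X}\omega=dQ$ there, so it only remains to choose $a,b$ making the corrected field tangent to $T^*S^3$, i.e.\ $df_1(X_Q)=df_2(X_Q)=0$. Expanding these two conditions and using $\{f_1,f_1\}=\{f_2,f_2\}=0$ yields the $2\times2$ linear system
\[
\{Q,f_1\}+b\,\{f_2,f_1\}=0,\qquad \{Q,f_2\}+a\,\{f_1,f_2\}=0,
\]
whose solution, rewritten via antisymmetry $\{f_i,f_j\}=-\{f_j,f_i\}$, is precisely $a=-\{f_2,Q\}/\{f_2,f_1\}$ and $b=-\{f_1,Q\}/\{f_1,f_2\}$, giving the claimed expression. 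Uniqueness is then immediate: any two tangent vector fields solving $\iota\omega=dQ$ on $T(T^*S^3)$ differ by an element of $\bigl(T(T^*S^3)\bigr)^{\omega}\cap T(T^*S^3)=0$.

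There is no serious obstacle here --- the content is bookkeeping --- but two points need care. First, one must confirm the second-class condition $\{f_1,f_2\}\neq0$ on $T^*S^3$, without which neither the construction nor the formula makes sense. Second, one must keep the Poisson-bracket and Hamilton-equation sign conventions consistent throughout, so that the solution of the linear system lines up with the exact form of the coefficients $-\{f_2,Q\}/\{f_2,f_1\}$ and $-\{f_1,Q\}/\{f_1,f_2\}$ as stated. I would also note that $Q$ comes with the explicit ambient extension $Q=\tfrac{1}{2g}(K+g)^2$, with $K$ written in the $(\xi,\eta)$ coordinates, so that $\tilde X_Q$ and the brackets $\{f_i,Q\}$ are literally computed in $\mathbb{R}^4\times\mathbb{R}^4$; the value of $X_Q$ does not depend on this choice of extension, by the uniqueness established above.
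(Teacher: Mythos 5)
Your proposal is correct and takes essentially the same route as the paper: the paper proves the general $2k$-constraint version in Appendix~\ref{sec:restricted-phase-space} by writing the ambient field as the tangential Hamiltonian field plus a combination of the $X_{f_i}$ and solving the linear system obtained by pairing with $X_{f_j}$, which is exactly your tangency conditions $df_j(X_Q)=0$ in disguise. Your explicit check that $\{f_1,f_2\}=2|\xi|^2=2$ on $T^*S^3$ (so the constraints are second class and the denominators make sense) and the remark on independence of the ambient extension are details the paper leaves implicit, but the core argument is the same.
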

In Appendix~\ref{sec:restricted-phase-space}, we give a general formulation of restricted phase spaces with $2k$ constraints, and the proof of the formula for $X_Q$ in this generalized setting.

\begin{remark}
The vector field $X_Q$ refers to the \emph{intrinsic} Hamiltonian vector field of $Q$ on the restricted phase space $T^*S^3$ as defined above. In contrast, $\tilde{X}_Q := (\partial_{\mathbf{p}} Q, -\partial_{\mathbf{q}} Q)$ is the \emph{extrinsic} Hamiltonian vector field of $Q$ viewed as a function on the ambient space $\mathbb{R}^4\times \mathbb{R}^4$.
For numerical integration, we work in $\mathbb{R}^8$ and compute $X_Q$ from the extrinsic vector field $\tilde{X}_Q$ using Lemma~\ref{lem:restricted_ham_vf_two}.
The regularized dynamics is then given by the ODEs
\begin{equation*}
    (\dot{\boldsymbol{\xi}}, \dot{\boldsymbol{\eta}}) = X_Q(\boldsymbol{\xi}, \boldsymbol{\eta}).
\end{equation*}
\end{remark}

\subsection{Floquet Multipliers and Bifurcations}
A key advantage of the Moser regularization is that the Floquet multipliers of periodic orbits can be computed in a simple way.  
Let $\gamma$ be a periodic orbit of period $\tau$ in the unregularized system, and let $\tilde{\gamma}$ be the corresponding periodic orbit in the regularized system on $T^*S^3\subset \mathbb{R}^8$.

Consider the monodromy matrix $\tilde{\Phi} = d\tilde{\phi}^\tau(\tilde{\gamma}(0))$ of the flow of $X_Q$ in $\mathbb{R}^8$.  
Because this flow has two additional integrals $f_1$ and $f_2$, we can take a basis adapted to these two integrals and to the tangent space of $T^*S^3$, for which the monodromy matrix takes the block form
\[
\left(
\begin{array}{c|c}
\text{id}_2 & 0 \\\hline
* & S
\end{array}
\right),
\]
where $S$ represents the derivative of the flow restricted to $T^*S^3$ (see Lemma 2.3 of~\cite{moser_zehnder_notes_2005}).  
The matrix $S$ is symplectic and is conjugate to the monodromy matrix of $\gamma$ in the original coordinates via the derivative of Moser's transformation~\eqref{eqn:moser_transf}. Thus, the nontrivial (i.e., not equal to $1$) Floquet multipliers of $\gamma$ and $\tilde{\gamma}$ are the same.

\begin{lemma}
\label{lem:moser-floquet}
Let $\gamma$ be a periodic orbit in the unregularized system $X_H$ with Floquet multipliers $\{1, 1, \lambda_1, \cdots, \lambda_4\}$. Then the Floquet multipliers of the corresponding orbit $\tilde{\gamma}$ in the Moser regularization are given as
\[\{1, 1, 1, 1, \lambda_1, \cdots, \lambda_4\}.\]
\end{lemma}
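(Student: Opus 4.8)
The plan is to exploit the fact that the extended Hamiltonian vector field $X_Q$ on $\mathbb{R}^8 = \mathbb{R}^4\times\mathbb{R}^4$ has $f_1 = |\xi|^2 - 1$ and $f_2 = \xi\cdot\eta$ as first integrals. This forces the $8\times 8$ monodromy matrix $\tilde\Phi = d\tilde\phi_\tau(\tilde\gamma(0))$ into a block form in which the complementary $2\times 2$ block is the identity and the $6\times 6$ diagonal block $S$ is the monodromy matrix of $\tilde\gamma$ inside $T^*S^3$, whose eigenvalues will turn out to coincide with the Floquet multipliers of $\gamma$.

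First I would check that $f_1$ and $f_2$ are preserved by the flow of $X_Q$: using the formula for $X_Q$ from Lemma~\ref{lem:restricted_ham_vf_two} together with the bilinearity and antisymmetry of the Poisson bracket, a one-line computation gives $df_1(X_Q) = df_2(X_Q) = 0$ (this uses only $\{f_1,f_2\}\neq 0$; indeed $\{f_1,f_2\} = 2|\xi|^2 = 2$ on $T^*S^3$). Hence along $\tilde\gamma$ the linearization $\tilde\Phi$ fixes the covectors $df_1(\tilde\gamma(0)), df_2(\tilde\gamma(0))$ and leaves invariant the $6$-dimensional subspace $W := \ker df_1 \cap \ker df_2 = T_{\tilde\gamma(0)}(T^*S^3)$. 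Choosing a transverse complement $V$ — for instance $V = \mathrm{span}(X_{f_1}(\tilde\gamma(0)), X_{f_2}(\tilde\gamma(0)))$, which is transverse to $W$ exactly because $\{f_1,f_2\}\neq 0$ — one obtains in an adapted basis the block form
\[
\tilde\Phi = \begin{pmatrix} \mathrm{id}_2 & 0 \\ * & S \end{pmatrix},
\]
where the upper-left block is the identity because $\tilde\Phi^{T}$ fixes $df_1$ and $df_2$, and $S = \tilde\Phi|_W$ is the monodromy matrix of $\tilde\gamma$ regarded as a periodic orbit of the Hamiltonian system $(T^*S^3, Q)$; this is Lemma~2.3 of~\cite{moser_zehnder_notes_2005} applied with the two additional integrals. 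Thus $\operatorname{spec}(\tilde\Phi) = \{1,1\}\cup\operatorname{spec}(S)$.

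Next I would identify $\operatorname{spec}(S)$ with $\{1,1,\lambda_1,\dots,\lambda_4\}$. By the proposition relating the flows of $X_H$ and $X_Q$, the orbit $\gamma$ on $\{H=c\}$ corresponds to $\tilde\gamma$ on $\{Q = g/2\}$ under the switch map followed by Moser's transformation~\eqref{eqn:moser_transf}, up to the time reparametrization $t=\int_0^\tau|q|\,d\tau$. Since $\gamma$ is a genuine (non-collision) periodic orbit it stays away from $q=0$, so $\tilde\gamma$ avoids the north pole and Moser's transformation is a diffeomorphism near the whole orbit; it is moreover a symplectomorphism, as is the switch map. A time reparametrization of a periodic orbit changes the linearized flow but not the Poincar\'e return map on a section transverse to the flow within the energy level, so it leaves the Floquet multipliers unchanged, while the symplectomorphism conjugates the two return maps. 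Since $\gamma$ and $\tilde\gamma$ are periodic orbits of Hamiltonian flows on symplectic $6$-manifolds, the eigenvalues of each monodromy matrix are the two trivial eigenvalues $1$ together with the four eigenvalues of the corresponding return map; hence $\operatorname{spec}(S) = \{1,1,\lambda_1,\dots,\lambda_4\}$, and combining with the previous step yields $\operatorname{spec}(\tilde\Phi) = \{1,1,1,1,\lambda_1,\dots,\lambda_4\}$.

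The Poisson-bracket verification and the linear-algebra block decomposition are routine. The step I expect to require the most care is the last one: pinning down the precise relation between $S$ and the monodromy of $\gamma$ — that Moser's transformation composed with the switch map is a symplectomorphism on a neighborhood of the orbit, and that the reparametrization $t=\int|q|\,d\tau$ does not affect the return map — so that the four nontrivial Floquet multipliers genuinely match and exactly the right number of trivial eigenvalues $1$ appears on each side.
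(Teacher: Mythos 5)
Your proposal is correct and follows essentially the same route as the paper: the block form $\begin{pmatrix}\mathrm{id}_2 & 0 \\ * & S\end{pmatrix}$ of the $8\times 8$ monodromy coming from the two integrals $f_1,f_2$ (as in Lemma~2.3 of Moser--Zehnder), followed by identifying the spectrum of $S$ with the Floquet multipliers of $\gamma$ via Moser's transformation and the time reparametrization. Your explicit Poisson-bracket check that $f_1,f_2$ are integrals of the extended field $X_Q$ and your return-map argument for the reparametrization are just more detailed versions of steps the paper asserts.
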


If more than four eigenvalues are equal to $1$, the orbit is degenerate and bifurcations may occur.
Let $X_s$ be a smooth family of vector fields, and let $s \mapsto \gamma_s$ be a smooth one-parameter family of periodic orbits of $X_s$. The linearized return map $R_s$ then depends smoothly on $s$, and its eigenvalues can be chosen to vary continuously with $s$: the latter claim follows from Theorem~5.1 and~5.2 in \cite[Chapter 2]{kato_eigenvalues}.
In Hamiltonian systems, these eigenvalues follow the symplectic eigenvalue theorem: if $\lambda$ is an eigenvalue, then so are $\bar \lambda$, $1/\lambda$ and $1/\bar \lambda$. 
They either lie on the real axis $\mathbb{R}$ or on the unit circle $S^1 \subset \mathbb{C}$, or appear as complex quadruples.
Hence, changes in stability may occur when a pair of eigenvalues merges at $1$ (\emph{tangent bifurcation}), at $-1$ (\emph{period-doubling bifurcation}), or when two pairs collide on $S^1$ and leave the unit circle as a complex quadruple (\emph{Krein collision}),~\cite{howard_mackay_linear_stability_1987}.  
Below we describe the local bifurcations relevant to our study, following the terminology of~\cite{campbell_bifurcations_1999}.

\begin{definition}[Local Bifurcations of Periodic Orbits]
\label{def:bifurcations}
A parameter value $s_0$ is called a
\begin{enumerate}
    \item \textbf{tangent bifurcation} if an eigenvalue family $\lambda_s$ satisfies $\lambda_s \in S^1$ for $s<s_0$ and $\lambda_s \in \R_{>0}$ for $s>s_0$;
    \item \textbf{fold bifurcation} if two smooth families of periodic orbits $\gamma_s$ and $\gamma'_s$ coincide at $s=s_0$, with respective eigenvalues $\lambda_s$ and $\lambda'_s$ satisfying $\lambda_s \in S^1$ and $\lambda'_s \in \R_{>0}$ for $s<s_0$;
    \item \textbf{period-doubling bifurcation} if an eigenvalue family $\lambda_s$ satisfies $\lambda_s \in S^1$ for $s<s_0$ and $\lambda_s \in \R_{<0}$ for $s>s_0$;
    \item \textbf{secondary Hopf bifurcation} if two eigenvalue families $\lambda_s,\mu_s$ lie in $S^1 \setminus \{1,-1\}$ and are distinct for $s<s_0$, while $\lambda_s,\mu_s \in \C \setminus S^1$ for $s>s_0$;
    \item \textbf{modified secondary Hopf bifurcation} if two eigenvalue families $\lambda_s,\mu_s \in \R \setminus \{1,-1\}$ and are distinct for $s<s_0$, while $\lambda_s,\mu_s \in \C \setminus \R$ for $s>s_0$.
\end{enumerate}
\end{definition}
See Figure~\ref{fig:schematic_bifurcations} for a visual description.
Note that the modified secondary Hopf bifurcation does not involve a change in stability.

\begin{figure}[htb]
    \begin{center}
    \def\svgwidth{1.0\textwidth}%
    \begingroup\endlinechar=-1
    \resizebox{0.8\textwidth}{!}{%
    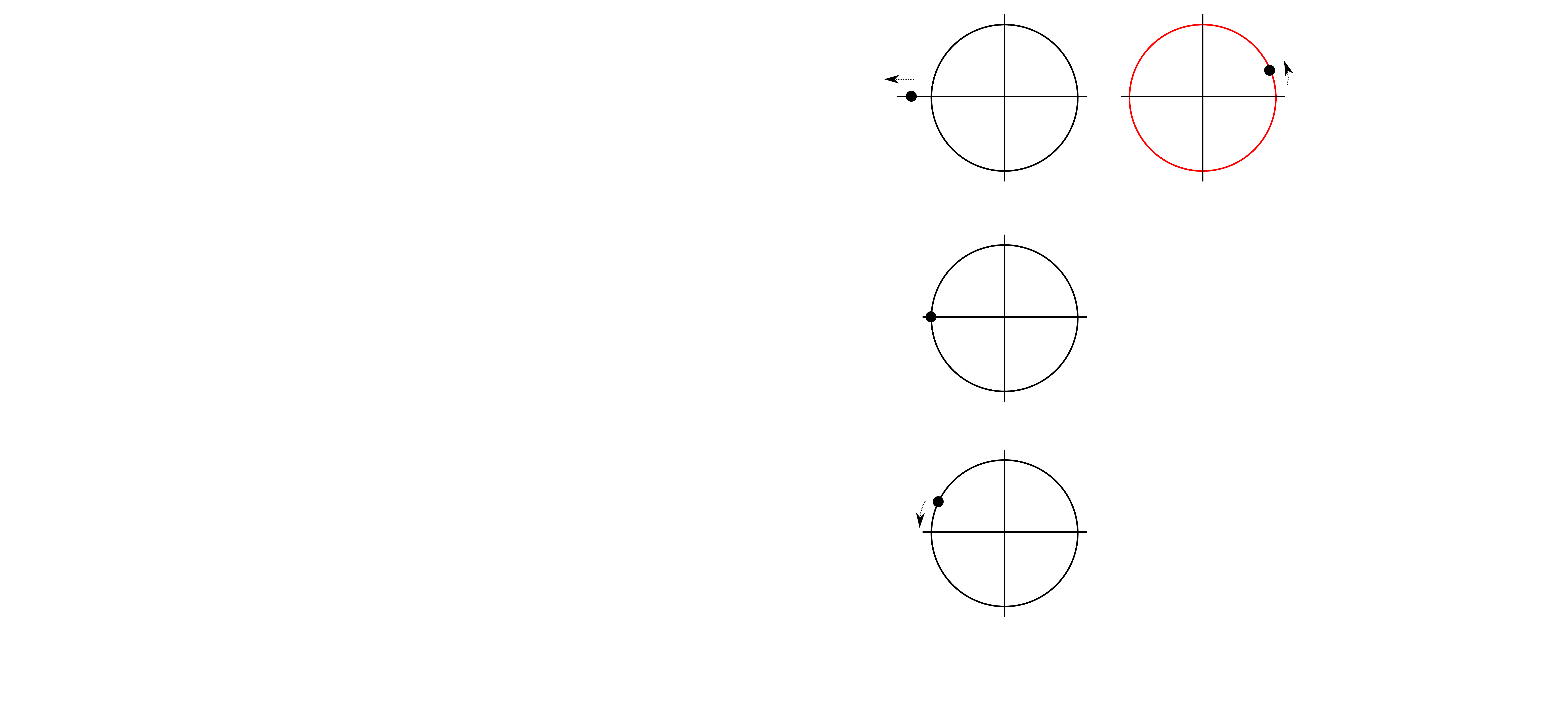%
    }\endgroup
    \caption{A schematic picture of the behavior of the eigenvalues during a bifurcation, from left to right: a tangent bifurcation, a fold bifurcation, a period-doubling bifurcation, and a secondary Hopf bifurcation. The bifurcation parameter $s$ is on the vertical axis.}
    \label{fig:schematic_bifurcations}
    \end{center}
 
\end{figure}

\subsection{Differential Correction and Continuation}
\label{sec:correction}
We present a differential correction scheme formulated directly in the Moser regularized coordinates~\eqref{eqn:moser_transf}, which allows periodic orbits to be continued smoothly through collision configurations.
The Poincar\'e section is chosen as $\xi_3 = 0$, corresponding to $p_3 = 0$ in the unregularized coordinates. This section naturally captures turning points in the $z$-direction, such as the periapsis and apoapsis of halo orbits, and contains the fixed-point locus of the symmetries.
All numerical integrations are performed using the high-order Taylor integrator implemented in the \texttt{heyoka} library,~\cite{biscani_heyoka_2021}. 

We choose the vector of free variables 
\[
    X = (\xi_1, \xi_2, \eta_1, \eta_2, \tau),
\]
where $\tau$ is the (predicted) period of the orbit. Given $X$, we consecutively determine the $\xi_0$ and $\eta_0$ variables based on the constraints $|\boldsymbol{\xi}|^2 = 1$ and $\boldsymbol{\xi} \cdot \boldsymbol{\eta} = 0$, respectively. Then, $\eta_3$ is determined by solving the energy constraint $Q = g/2$, for which we use numerical root finding based on Brent's method,~\cite{brent_algorithm_1971}.
Define the constraint function:
\begin{equation*}
    F(X) = p \circ \phi^\tau(\xi_0, \xi_1, \xi_2, 0, \eta_0, \eta_1, \eta_2, \eta_3) - (\xi_1, \xi_2, 0, \eta_1, \eta_2),
\end{equation*}
where $\phi^\tau$ denotes integration of the flow of $X_Q$ for time $\tau$, and $p$ is the projection to $(\xi_1, \xi_2, \xi_3, \eta_1, \eta_2)$ coordinates. The differential correction scheme is given by Newton's method as follows:
\[ X_{i + 1} = X_i - DF(X_i)^{-1} F(X_i), \qquad i=0,1,2,\cdots. \]
We iterate until convergence, where a vector $X_i$ is considered to be a periodic orbit if $|F(X_i)|$ is sufficiently small: we chose the threshold to be less than $10^{-10}$ for most computations, and switched to $10^{-30}$ for high-precision computations, which were needed for the investigation of touch-and-go bifurcations in Saturn--Enceladus.
To compute the Jacobian matrix $DF$, we evaluate the linearized flow, as well as symbolically compute the derivatives of the dependent variables $\xi_0, \eta_0$ and $\eta_3$ with respect to the free variables. See Appendix~\ref{sec:df_derivation} for the details of this derivation.

We describe the correction scheme for symmetric periodic orbits with respect to the reflection $r_y$ across the plane $y=0$; the formulation for the reflection $r_x$ across $x=0$ is analogous. For this symmetry, the fixed point locus is given by $q_2 = p_1 = p_3 = 0$, which in Moser coordinates corresponds to $\xi_1 = \xi_3 = \eta_0 = \eta_2 = 0$. We can therefore reduce the free variable vector to
\[
    X_{\textrm{sym}} = (\xi_2, \eta_1, \tau).
\]
The remaining variables $\xi_0$ and $\eta_3$ are determined as before. The constraint function is then defined as
\[
    F_{\textrm{sym}}(X_{\textrm{sym}}) = p_{\textrm{sym}} \circ \phi^{\tau/2}(\xi_0, 0, \xi_2, 0, 0, \eta_1, 0, \eta_3),
\]
where $p_{\textrm{sym}}$ projects to $(\xi_1, \xi_3, \eta_0)$ coordinates. This gives a reduced $3$-dimensional correction scheme.
The halved integration time also improves numerical robustness. For doubly symmetric orbits, the integration interval can be further reduced to a quarter period; however, this additional reduction was not required for the computations in this study.

For the numerical continuation of periodic orbits, we apply pseudo-arclength continuation in the unregularized system, and switch to natural parameter continuation with respect to the Hamiltonian energy in the Moser-regularized system as trajectories approach the collision singularity. 
Initial periodic orbits are obtained either from the JPL Three-Body Periodic Orbit Catalog~(\url{https://ssd.jpl.nasa.gov/tools/periodic_orbits.html}) or from internal data computed using the cell-mapping method,~\cite{koh_cell_mapping_2021}, and are transferred between systems and mass ratios using Algorithm~\ref{alg:scaling-conti}. 
For several families studied in Sections~\ref{sec:w4/w5} and~\ref{sec:trifly}, branch switching at local bifurcation points is applied. 
We refer to~\cite[Section~10.2]{kuznetsov_applied_bifurcation_theory_2023} for the formulation of the continuation schemes and the branch-switching method.

\subsection{Vertical Collision Orbits}
\label{sec:vertical-collision}
Vertical collision orbits in the Hill's problem arise from dynamics confined to the $z$-axis, i.e., the submanifold $q_1 = q_2 = p_1 = p_2 = 0$, which is invariant under the Hamiltonian flow.
Restricting to the $z$-axis, we get a reduced $2$-dimensional Hamiltonian system on the $(q_3, p_3)$-plane:
\[
    \dot{q_3} = \frac{\partial \hat{H}_0}{\partial p_3} = p_3,
    \qquad
    \dot{p_3} = -\frac{\partial \hat{H}_0}{\partial q_3} = - \frac{q_3}{|q_3|^3} - q_3.  
\]
This system is singular at $q_3=0$, corresponding to collision with the primary at the origin. For initial conditions with $q_3\neq 0$, the trajectory reaches a collision after a finite time, which is computed in~\cite{belbruno_family_2019}.

To continue these trajectories through collision, we apply Moser regularization. In this framework, the dynamics restricted to the vertical subspace corresponds to the invariant set
\begin{align*}
& \xi_1 = \xi_2 = \eta_1 = \eta_2 = 0, \\
& \xi_0^2 + \xi_3 ^2 = 1, \\
& \xi_0\eta_0 + \xi_3\eta_3 = 0,
\end{align*}
giving a one degree-of-freedom Hamiltonian system. 
The regularized Hamiltonian on this subset takes the form
\begin{equation*}
Q = \frac{1}{2}\left(
    1 - (1 - \xi_0)(c + 1/2) + (1 - \xi_0)\cdot \frac{1}{2}(\eta_0\xi_3 + (1 - \xi_0)\eta_3)^2
\right)^2(\eta_0^2 + \eta_3^2).
\end{equation*}
This function is smooth across the collision locus $p_3=\pm \infty$, which corresponds to $\xi_0 = 1$. Thus, the collision trajectories extend to smooth periodic orbits in the regularized phase space, as shown in Figure~\ref{fig:collision-orbit}. We call these orbits \emph{vertical collision orbits}.

\begin{figure}[!ht]
\centering
\includegraphics[width=0.7\textwidth]{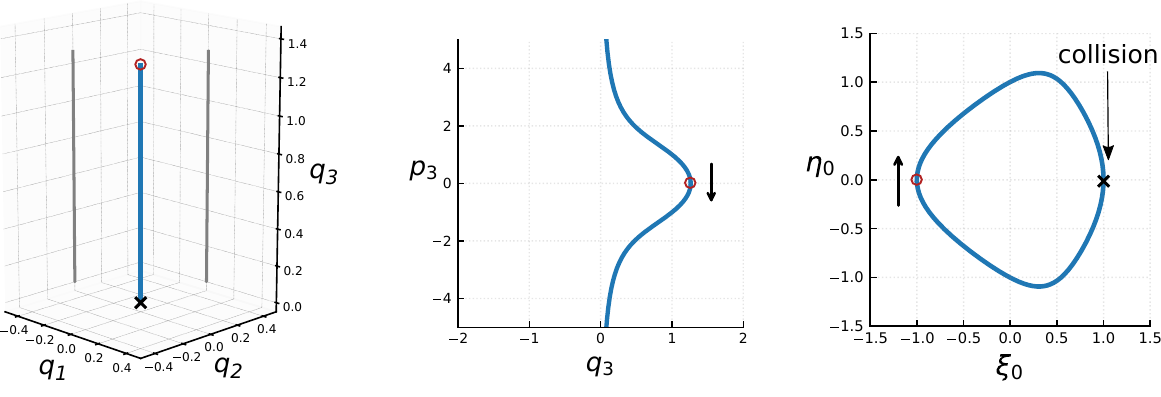}
\caption{Northern vertical collision orbit in Hill's problem, shown in configuration space (left), the $(q_3,p_3)$ plane (center), and Moser-regularized coordinates $(\xi_0,\eta_0)$ (right). Red marker denotes apoapsis (maximum $q_3$), and the cross indicates the periapsis corresponding to collision with the light primary.}
\label{fig:collision-orbit}
\end{figure}

For each energy level $\hat{H}_0 = h$, there exists a unique pair of vertical collision orbits. These can be characterized explicitly using the Poincar\'e section $p_3 = 0$, at which the corresponding position $q_3$ is given by
\begin{equation}
\label{eqn:collsion_h_to_q3}
\hat{H}_0 = - \frac{1}{|q_3|} + \frac{1}{2}q_3^2 = h.    
\end{equation}
Solving for $q_3$, we get two solutions for each $h$ with opposite signs, corresponding to two orbits reflected about the origin. We refer to the orbit with $q_3 > 0$ as the \emph{northern} vertical collision orbit, and the other with $q_3 < 0$ \emph{southern}. The positive solution $q_{3,\textrm{max}}$ represents the maximal height of these orbits, and increases monotonically with energy $h$, starting from zero and increasing to infinity.

\begin{proposition}
For each energy level $\hat{H}_0 = h$, the Moser regularized Hill's problem admits a pair of vertical collision orbits $\gamma_{\pm, h}$ whose motion is confined to the $z$-axis, with initial coordinates
\[ \gamma_{\pm, h} (0) = (0, 0, \pm q_{3, \textrm{max}}(h), 0, 0, 0) \]
where $q_{3, \textrm{max}}(h)$ is the positive solution to equation~\eqref{eqn:collsion_h_to_q3}.
\end{proposition}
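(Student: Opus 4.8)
The plan is to pass to the one-degree-of-freedom system obtained by restricting to the $z$-axis, and to show that for each $h$ its Moser-regularized energy level $\{Q=\tfrac12\}$ is a disjoint union of exactly two periodic orbits, one through each prescribed point. I would start from the fact, recorded above, that the subspace $q_1=q_2=p_1=p_2=0$ is invariant and that, under Moser's transformation~\eqref{eqn:moser_transf}, it corresponds to the invariant set $\Lambda=\{\xi_1=\xi_2=\eta_1=\eta_2=0\}\cap T^*S^3$, a symplectic $2$-manifold (topologically a cylinder) on which $Q$ restricts to the smooth function~\eqref{eqn:moser_ham_confined}. Away from the collision locus $\{\xi_0=1\}$, Moser's map restricts to a diffeomorphism from $\big(\{Q=\tfrac12\}\cap\Lambda\big)\setminus\{\xi_0=1\}$ onto the energy surface of the reduced planar system, i.e.\ onto the curve $\mathcal C_h=\{\tfrac12 p_3^2-\tfrac1{|q_3|}+\tfrac12 q_3^2=h,\ q_3\neq0\}$ in the $(q_3,p_3)$-plane.

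Next I would describe $\mathcal C_h$ and compactify it inside $\Lambda$. Since $q_3\mapsto -\tfrac1{q_3}+\tfrac12 q_3^2$ increases strictly from $-\infty$ to $+\infty$ on $(0,\infty)$, equation~\eqref{eqn:collsion_h_to_q3} has a unique positive root $q_{3,\mathrm{max}}(h)$ for each $h$, and $\mathcal C_h$ splits into the two arcs $\{q_3>0\}$ and $\{q_3<0\}$. On $\{q_3>0\}$ one has $p_3=\pm\sqrt{2(h+1/q_3-q_3^2/2)}$, the radicand being positive on $(0,q_{3,\mathrm{max}}(h))$ with a simple zero at the endpoint; hence this arc is a smooth embedded curve with a single turning point $(q_{3,\mathrm{max}}(h),0)$ and two ends escaping to $q_3\to0^+,\ p_3\to\pm\infty$, and symmetrically for $\{q_3<0\}$. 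The decisive point is to determine the limit of the regularized coordinates at collision: using the inverse transformation formulas together with $K=0$, one checks that along each arc, as $\xi_0\to1$, one has $\eta_0\to0$, $|\eta|\to1$, and $\eta_3\to-1$ on the $\{q_3>0\}$ arc (respectively $\eta_3\to+1$ on the $\{q_3<0\}$ arc). Thus the two ends of the $\{q_3>0\}$ arc converge to the \emph{same} point $N_+\in\Lambda$ with $\xi=(1,0,0,0),\ \eta=(0,0,0,-1)$, and the two ends of the $\{q_3<0\}$ arc to $N_-$ with $\eta=(0,0,0,1)\neq N_+$; since $Q$ is smooth across $\{\xi_0=1\}$ and its level set meets $\{\xi_0=1\}$ inside $\Lambda$ in exactly $N_+$ and $N_-$, adjoining these points turns the two arcs into two disjoint smooth embedded circles $C_h^{\pm}$ with $C_h^+\cup C_h^-=\{Q=\tfrac12\}\cap\Lambda$.

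It remains to see that $C_h^\pm$ are single periodic orbits and to locate the base points. On $\{Q=\tfrac12\}=\{K=0\}$ we have $dQ=\tfrac1g(K+g)\,dK=dK=|q|\,d\hat H_0$ away from collision, which is nonzero since $\partial_{q_3}\hat H_0=q_3/|q_3|^3+q_3\neq0$ for $q_3\neq0$; at $N_\pm$ the restriction of~\eqref{eqn:moser_ham_confined} to $\Lambda$ has the local form $Q=\tfrac12\eta_3^2+O(1-\xi_0)$, so $dQ\neq0$ there too. Hence $X_Q$ is tangent to and nonvanishing on each compact circle $C_h^\pm\subset\Lambda$, and a nonvanishing vector field on a circle has periodic flow; write $\gamma_{+,h}$ for the orbit supported on $C_h^+$ (the northern one, confined to $q_3\geq0$) and $\gamma_{-,h}$ for the one on $C_h^-$ — both lie in $\Lambda$, hence are confined to the $z$-axis. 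Finally, the point $(\pm q_{3,\mathrm{max}}(h),0,0;0,0,0)$ has $p=0$, so $\hat H_0$ reduces there to $-1/|q_3|+\tfrac12 q_3^2=h$ by~\eqref{eqn:collsion_h_to_q3}, and $q_3\neq0$ means it is off collision; hence its Moser image lies on $C_h^+$ (respectively $C_h^-$), and choosing it as $\gamma_{\pm,h}(0)$ yields the asserted pair. Since $C_h^+\cup C_h^-$ exhausts $\{Q=\tfrac12\}\cap\Lambda$, this pair is the only one.

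The step I expect to be the main obstacle is the gluing in the second paragraph: one must show that Moser regularization sends the two distinct infinite ends of each planar arc to a single point of $T^*S^3$, not two (two would leave the level set an arc rather than a circle). This is precisely the limit $\eta_0\to0,\ |\eta|\to1,\ \eta_3\to\mp1$ as $\xi_0\to1$, and it rests on the behavior of $K=(\hat H_0-h)|q|$ near collision; once it is in place, the shape of $\mathcal C_h$, the non-vanishing of $X_Q$, and the location of the base point are all routine.
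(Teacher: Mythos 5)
Your argument is correct and follows essentially the same route as the paper: restrict to the invariant vertical subspace, where the Moser-regularized Hamiltonian~\eqref{eqn:moser_ham_confined} gives a one-degree-of-freedom system smooth across the collision locus $\xi_0=1$, and identify the pair of orbits on the section $p_3=0$ through the positive and negative roots of~\eqref{eqn:collsion_h_to_q3}. The paper leaves the level-set topology implicit, and your write-up supplies exactly those details (gluing of the two collision ends of each arc to a single point, compactness of the resulting circles, and nonvanishing of $X_Q$), with the only caveat being the sign of $\eta_3$ at the collision point, which depends on the switch-map convention and does not affect the statement.
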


\begin{remark}
For numerical computation of vertical collision orbits, we first solve equation~(\ref{eqn:collsion_h_to_q3}) for $q_3$ using Cardano's formula:
\[
q_{3,\textrm{max}}={\frac {1}{3}\sqrt [3]{27+3\,\sqrt {-24\,{h}^{3}+81}}}+2\,{\frac {h}{
\sqrt [3]{27+3\,\sqrt {-24\,{h}^{3}+81}}}}.
\]
The resulting initial condition can then be integrated in the regularized system until the second return to the section $\xi_3 = 0$, which corresponds to $p_3 = 0$.
\end{remark}

\section{Numerical Results}
\label{sec:numerical-work}

\subsection{Bifurcations of Vertical Collision Orbits and Halo Orbits in Hill's Problem}
\label{sec:bif-hill}

The bifurcation diagram of the northern vertical collision family is shown in Figure~\ref{fig:bif-graph-hill}.
In all bifurcation diagrams presented in this work, including Figure~\ref{fig:bif-graph-hill} and those in subsequent sections, the labeled integers along each branch denote the associated Conley--Zehnder indices of the periodic orbits. 
To highlight regions of linear stability, we additionally shade the corresponding energy intervals for which all Floquet multipliers lie on the unit circle with a green background. 

As a one-parameter family parameterized by the Hamiltonian energy $\hat{H}_0=h$, the family begins as a linearly stable periodic orbit possessing two pairs of elliptic Floquet multipliers. 
As $h$ increases, five distinct bifurcation events are detected:
\begin{enumerate}[(1)]
\item \textbf{First period-doubling bifurcation} ($h \approx -1.02$)
A period-doubling bifurcation occurs as an elliptic pair of Floquet multipliers transitions to a hyperbolic pair, at which point a doubly symmetric \emph{butterfly family} of periodic orbits emerges.

\item \textbf{First pitchfork bifurcation} ($h \approx -0.85$)
The remaining elliptic pair transitions to a positive hyperbolic pair at a point of a pitchfork bifurcation.
This bifurcation gives rise to two branches of $r_y$-symmetric orbits, which appear in symmetry with respect to the reflection $r_x$.
These correspond to the northern $L_1$ and $L_2$ halo families.

\item \textbf{Second pitchfork bifurcation} ($h \approx 0.04$)
A second pitchfork bifurcation occurs, producing two branches of $r_x$-symmetric orbits that appear in symmetry with respect to the reflection $r_y$.
After perturbation to the CR3BP, the $r_x$-symmetry of the orbits is broken, giving families of non-symmetric orbits which connect to vertical families of orbits emanating from the $L_4$ and $L_5$ Lagrange points. These correspond to the W4/W5 families described in~\cite{doedel_elemental_2007}.

\item \textbf{Second period-doubling bifurcation} ($h \approx 0.09$)
A second period-doubling bifurcation occurs, giving rise to the ``moth'' family of doubly symmetric orbits.
This family also appears in the bifurcation graph presented in~\cite[Section 5.3]{aydin_Batkhin_hill_2025}, where its continuation is shown to connect to the planar direct family of periodic orbits (family $g$).

\item \textbf{Secondary Hopf bifurcation} ($h \approx 0.11$)
Finally, a secondary Hopf bifurcation is observed, in which two pairs of Floquet multipliers collide on the unit circle and move off into the complex plane.
This transition marks the onset of strong instability, with the modulus of one pair increasing far beyond unity while the other decreases toward zero.
\end{enumerate}

The branches described in (1)-(4) are illustrated in Figure~\ref{fig:four-bif}. Table~\ref{tbl:data-hill} lists numerical data for selected orbits on these branches. 
We note that an anonymous reviewer pointed out that some of the results in this section were also described by Lidov and Liakhova in~\cite{lidov_families_1983}, however, we were unable to access this source.

\begin{figure}[!ht]
\centering
\includegraphics[width=0.75\textwidth]{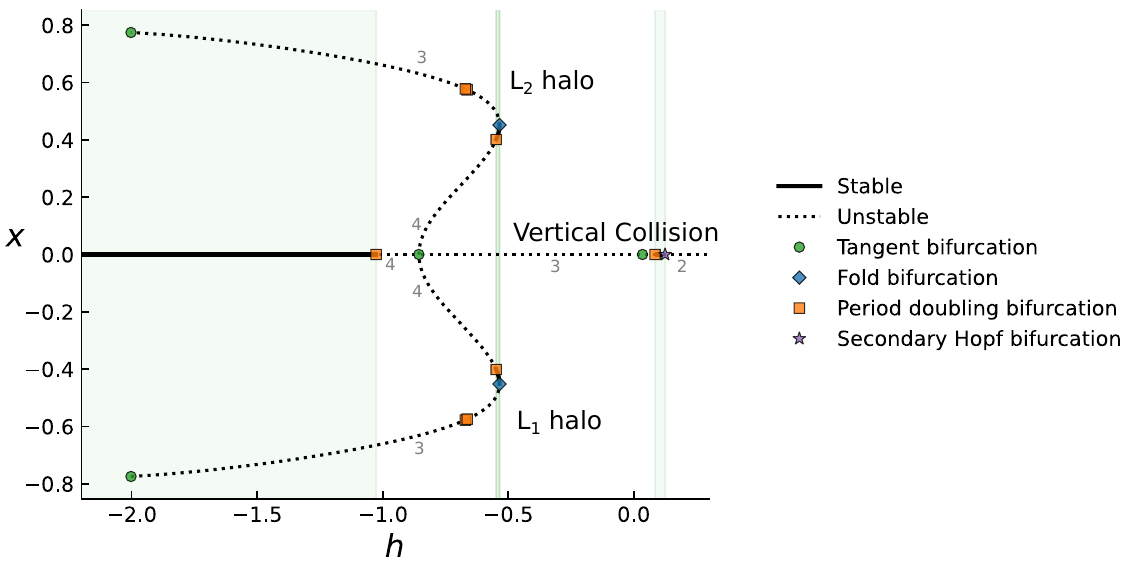}
\caption{Bifurcation diagram of the vertical collision orbit (central line) and the $L_1$ and $L_2$ halo families (branches) in Hill's problem. The horizontal axis corresponds to the Hamiltonian energy $\hat{H}_0 = h$, and the vertical axis shows the $x$-coordinate at the apoapsis.}
\label{fig:bif-graph-hill}
\end{figure}

\begin{figure}[!ht]
\centering
\includegraphics[width=1.0\textwidth]{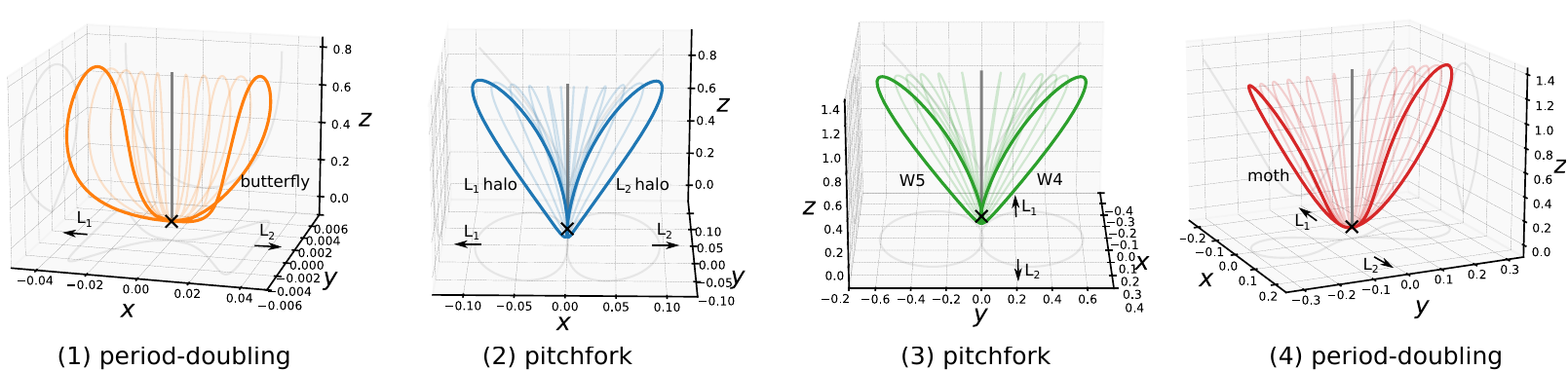}
\caption{Periodic orbit families bifurcating from vertical collision orbits (gray) in Hill's problem: (1) butterfly family from the first period-doubling bifurcation; (2) $L_1$ and $L_2$ halo families from the first pitchfork bifurcation; (3) $r_x$-symmetric branches from the second pitchfork bifurcation; (4) doubly symmetric family from the second period-doubling bifurcation.}
\label{fig:four-bif}
\end{figure}

The bifurcation graphs of the two halo branches are plotted together in Figure~\ref{fig:bif-graph-hill}.
The halo families originate from the planar Lyapunov family at $h \approx -2.0$.
As the energy increases, each halo branch undergoes two successive period-doubling bifurcations followed by a fold bifurcation.
In the literature, this fold bifurcation is often associated with the onset of the \emph{near rectilinear halo orbit} (NRHO) regime,~\cite{zimovan_spreen_nrho_2020}.
Beyond the fold, the family undergoes a narrow interval of linear stability that terminates at another period-doubling bifurcation.
Continuing further in $h$, the family connects with the vertical collision orbit at the pitchfork bifurcation near $h \approx -0.85$.
In Figure~\ref{fig:family-hill-halo} we visualize the northern $L_1$ halo family as it bifurcates from the planar Lyapunov orbit and terminates in the vertical collision orbit.

\begin{figure}[!ht]
\centering
\includegraphics[width=0.8\textwidth]{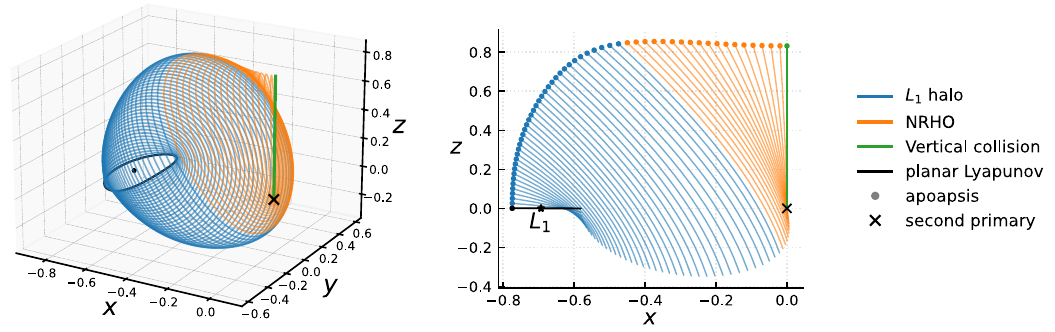}
\caption{Continuation of the $L_1$ halo family in Hill's problem, from its bifurcation from the planar Lyapunov orbit to its termination in the vertical collision orbit.}
\label{fig:family-hill-halo}
\end{figure}

\subsection{Perturbation of Vertical Collision Orbits to CR3BP}
\label{sec:collision-perturbation}
We study the continuation of the vertical collision orbits of Hill's problem under perturbation to CR3BP, with respect to the parameter $\nu=\mu^{1/3}$.
The discussion here extends the similar description given for the KS regularization in~\cite[Section 2.3.6]{gomez_llibre_martinez_simo_dynamics_vol1}.

Fix a Hamiltonian energy $\hat{H}_\nu = h$ and let $X_\nu$ denote the Hamiltonian vector field on the Moser regularized energy surface. Denote by $\phi_\nu(x, t)$ the time-$t$ flow of $X_\nu$.
Let $x_0$ be a point of the vertical collision orbit of Hill's problem on the Poincar\'e section $\xi_3=0$, with period $t_0$. We seek perturbed solutions $(x_\nu, t_\nu)$ satisfying 
\begin{equation*}
    \phi_\nu(x_\nu, t_\nu) - x_\nu = 0.
\end{equation*}
The constraints on $x_\nu$ include the energy surface $\hat{H}_\nu = h$, the section $\xi_3=0$, and the constraints $f_1 = |\boldsymbol{\xi}|^2 = 1$ and $f_2 = \boldsymbol{\xi}\cdot\boldsymbol{\eta} = 0$ from the Moser regularization.

Linearizing with respect to $\nu$ at $\nu=0$ gives
\begin{equation}
    \label{eqn:perturbed-linearized}
    (d\phi_0 - \text{id}) \Delta x + X_0(x_0) \Delta t + \Delta \phi = 0,
\end{equation}
where $\Delta x = \frac{d x_\nu}{d\nu}\big|_{\nu=0}$, $\Delta t = \frac{d t\nu}{d\nu}\big|_{\nu=0}$, and
\begin{equation*}
\Delta \phi = \int_0^{t_0} \frac{\partial X_0}{\partial \nu}(\phi_0(x_0, s)) ds.
\end{equation*}

Since the vertical collision orbit lies in the invariant ``vertical'' subspace $\xi_1=\xi_2=\eta_1=\eta_2=0$, we project \eqref{eqn:perturbed-linearized} onto the ``horizontal'' coordinates $(\xi_1, \xi_2, \eta_1, \eta_2)$. In this projection, the $\Delta t$ term vanishes, yielding
\begin{equation*}
    A 
    \begin{bmatrix}
        \Delta \xi_1 \\ 
        \Delta \xi_2 \\ 
        \Delta \eta_1 \\ 
        \Delta \eta_2
    \end{bmatrix}
    + b = 0,
\end{equation*}
where
\begin{equation*}
    A = \frac{\partial (\pi_{(\xi_1, \xi_2, \eta_1, \eta_2)} \circ \phi_0)}{\partial (\xi_1, \xi_2, \eta_1, \eta_2)} - \text{id},
    \quad
    b = \pi_{(\xi_1, \xi_2, \eta_1, \eta_2)} (\Delta \phi).
\end{equation*}
The matrix $A$ is invertible except at two energy levels $h_1\approx -0.85$ and $h_2\approx 0.04$ corresponding to degeneracies. Numerical evaluation shows that the solutions $\Delta x = -A^{-1}b$ diverges near $h=h_1$, but is continuous near $h=h_2$, with $\Delta \xi_1 = \Delta \eta_2 = 0$, implying perturbation to symmetric orbits.

Next, we consider the $r_y$-symmetric continuation of the vertical collision orbits. We look for perturbed $r_y$-symmetric solutions $(x_\nu, t_\nu)$ satisfying
\begin{equation*}
    \pi_{(\xi_1, \xi_3, \eta_0, \eta_2)} \circ \phi_\nu (x_\nu, t_\nu) = 0
\end{equation*}
while restricting to the fixed point locus
\[
    x_\nu \in \text{Fix}(r_y) = \{(\xi,\eta) \,|\,  \xi_1 = \xi_3 = \eta_0 = \eta_2 = 0\}.
\]
The linearized perturbation projected onto $(\xi_2, \eta_1)$ coordinates give
\begin{equation}
    \label{eqn:perturbed-symmetric-linearized-projected}
    A_{\text{sym}} \cdot 
    \begin{bmatrix}
        \Delta \xi_2 \\ 
        \Delta \eta_1 \\ 
    \end{bmatrix}
    + b_{\text{sym}} = 0,
\end{equation}
where
\begin{equation*}
    A_{\text{sym}} = \frac{\partial (\pi_{(\xi_1, \eta_2)} \circ \phi_0)}{\partial (\xi_2, \eta_1)},
    \quad
    b_{\text{sym}} = \pi_{(\xi_1, \eta_2)} (\Delta \phi).
\end{equation*}
Numerical evaluation shows that $A_{\text{sym}}$ is singular only at $h=h_1$, whereas the orbit at $h=h_2$ remains non-degenerate as a symmetric orbit.

\begin{figure}[!ht]
\centering
\includegraphics[width=0.65\textwidth]{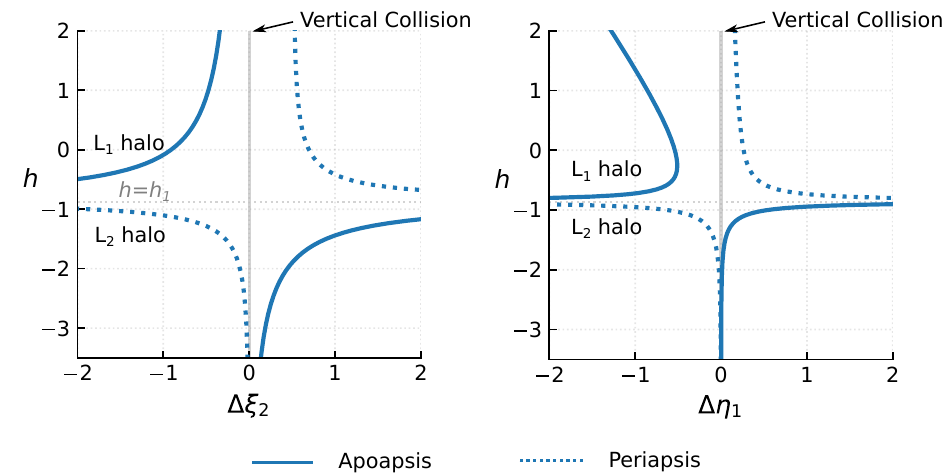}
\caption{Displacements in $\xi_2$ (left) and $\eta_1$ (right) coordinates of the vertical collision orbit of Hill's problem at apoapsis ($q_3$ maximal) and periapsis ($q_3=0$) under perturbation to halo orbits in the CR3BP.}
\label{fig:perturbation}
\end{figure}

The solution to the system~\eqref{eqn:perturbed-symmetric-linearized-projected} at the two intersections of the vertical collision orbit with $\xi_3=0$ -- the apoapsis ($q_3$ maximal) and periapsis ($q_3=0$) -- is shown in Figure~\ref{fig:perturbation}. Since $\Delta \xi_2\neq 0$, we have $|\xi_0| < 1$ after perturbation, indicating that vertical collision orbits are continued to non-collision orbits.
Using the relation $\Delta q_1 = (1 - \xi_0) \Delta \eta_1$, the displacement of the unregularized coordinate $q_1$ has the same sign as $\Delta \eta_1$. We summarize the observations as follows:
\begin{enumerate}
\item The vertical collision orbit is degenerate at two energy levels, $h=h_1$ and $h=h_2$. As an $r_y$-symmetric orbit, the orbit at $h=h_2$ is non-degenerate. Hence, the vertical collision orbit continues for $h\neq h_1$ to an $r_y$-symmetric orbit after perturbation to the CR3BP by Theorem~\ref{thm:C0_perturbation}.
\item For $h < h_1$, the vertical collision orbit continues under perturbation to a symmetric, non-collision $L_2$ halo orbit, with apoapsis displaced outward ($q_1 > 1-\mu$) and periapsis inward ($q_1 < 1-\mu$) for small $\mu$.
\item For $h > h_1$, the vertical collision orbit continues under perturbation to a symmetric, non-collision $L_1$ halo orbit, with apoapsis displaced inward ($q_1 < 1-\mu$) and periapsis outward ($q_1 > 1-\mu$) for small $\mu$.
\end{enumerate}

\subsection{Near-Collision Behavior of Halo Orbits in the Saturn--Enceladus System}
\label{sec:se-halo}

As an explicit case of the perturbation from Hill's problem to the CR3BP, we examine the near-collision behavior of halo orbits in the Saturn--Enceladus system ($\mu = 1.901109735892602 \times 10^{-7}$), computed via continuation in the Moser-regularized system. 

The bifurcation diagrams of the northern $L_1$ and $L_2$ halo families are shown in Figure~\ref{fig:bif-graph-SE}, with numerical data for selected orbits given in Table~\ref{tbl:data-halo}. Compared with Hill's problem, the breaking of the $r_x$-symmetry in the CR3BP removes the pitchfork bifurcation connecting the vertical collision orbit and the halo families. Consequently, the low-energy branch of the vertical collision orbit connects to the $L_2$ halo family, while the high-energy branch connects to the $L_1$ halo family. 
Note that this behavior is similar to the bifurcation structure of the planar direct periodic orbit families in the CR3BP and Hill's problem (families $g$ and $g'$), as presented in~\cite{moreno_aydin_van_Koert_Frauenfelder_Koh_Bifurcation_Graphs_2024} and earlier in~\cite{lara_family_g_2007}.
We refer to the near-vertical continuations of the halo branches as \emph{collisional halo orbits}.

\begin{figure}[!ht]
\centering
\includegraphics[width=0.75\textwidth]{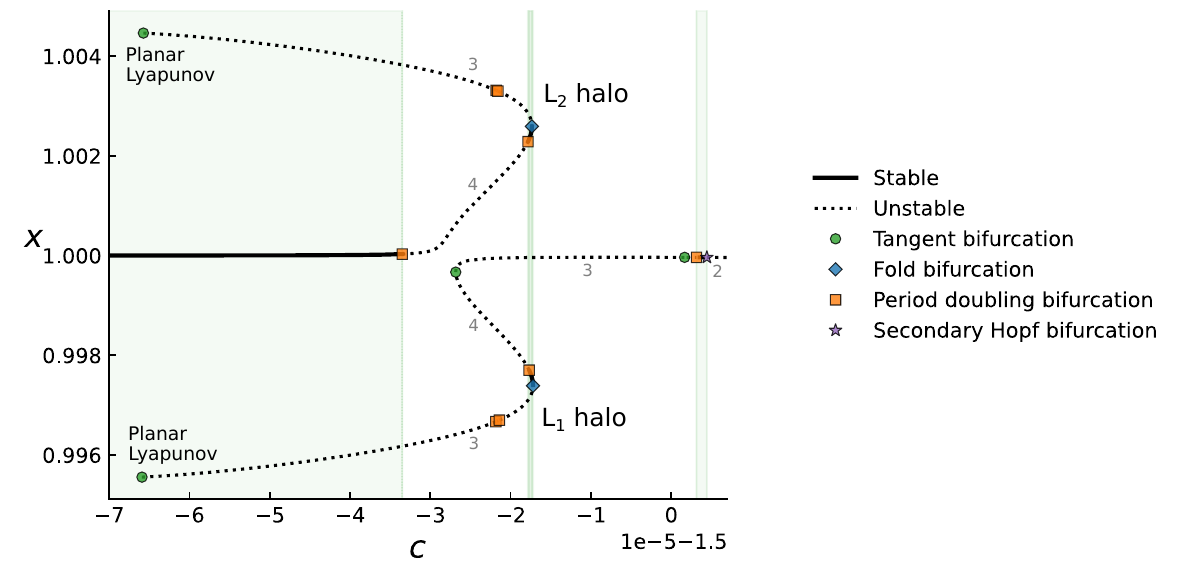}
\caption{Bifurcation diagram of the $L_1$ and $L_2$ halo families in the Saturn--Enceladus system. The horizontal axis corresponds to the Jacobi energy $H_\mu = c$, and the vertical axis shows the $x$-coordinate at the apoapsis.}
\label{fig:bif-graph-SE}
\end{figure}

Figure~\ref{fig:collisional-halo-se} illustrates the transitions from planar to collisional of the $L_1$ and $L_2$ halo families, respectively. 
Prior to the collisional branch, the orbits appear in a near symmetric configuration with respect to the plane $x = 1 - \mu$, due to the approximate symmetry $r_x$ continued from the Hill's problem. 
Beyond the NRHO regime, the orbits become increasingly confined in the $yz$-plane and acquire a near vertical geometry. 
In this regime, the $z$-coordinate of the apoapsis grows rapidly for the $L_1$ family and decreases for the $L_2$ family, while both retain their vertical structure.

\begin{figure}[!ht]
\centering
\includegraphics[width=0.8\textwidth]{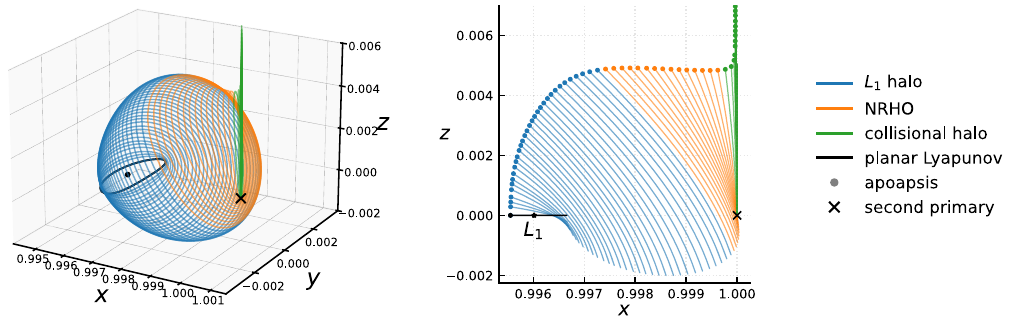}\\
\includegraphics[width=0.8\textwidth]{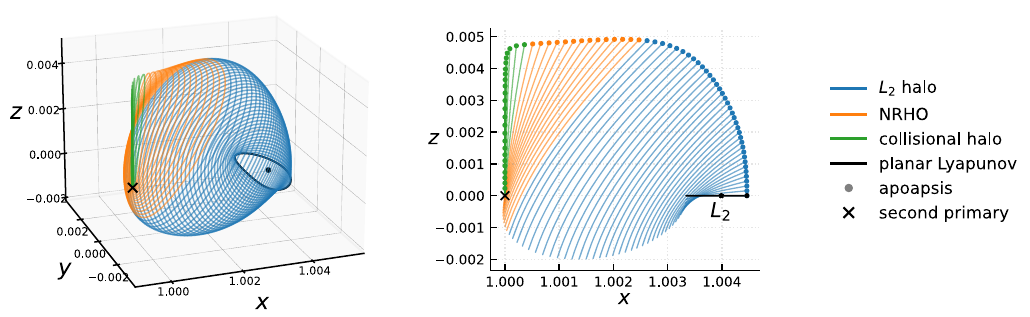}
\caption{Continuation of the $L_1$ (top) and $L_2$ (bottom) halo families in the Saturn--Enceladus CR3BP.}
\label{fig:collisional-halo-se}
\end{figure}

\subsection{Bifurcation Surface of Halo Families from $\mu=0$ to $\mu=0.5$}
\label{sec:bif-surface}
To understand the global evolution of the halo families and their relation to the vertical collision orbits, we compute the bifurcation diagrams of the $L_1$ and $L_2$ halo orbits for mass ratios $\mu \in [0, 0.5]$ of the rescaled CR3BP Hamiltonian $\hat{H}_\mu$.
The Jacobi energy is varied over the range $h \in [-1, 0.4]$ to construct a two-parameter bifurcation surface that continuously connects Hill's problem to the equal-mass case.

The resulting bifurcation surface is shown in Figure~\ref{fig:bif-surface}.
For small mass ratios, the $L_1$ family shows two distinct fold bifurcations, bounding an interval which includes the linearly stable regime corresponding to the \emph{near-rectilinear halo orbits} (NRHOs).
A qualitative transition occurs at a critical mass ratio $\mu^*$ between $\mu=0.05$ and $\mu=0.06$, where the two folds disappear and the Jacobi energy becomes a monotone function along the family.
Thus, for most planet--moon systems, the bifurcation structure of the $L_1$ family includes the same set of degeneracies, with exceptions such as the Pluto--Charon system ($\mu \approx 0.12$). 
The critical value $\mu^*$ corresponds to the upper limit $\mu^* \approx 0.0573$ identified by Howell,~\cite{howell_three-dimensional_1984}, for the existence of a linearly stable region of the $L_1$ halo family.
Meanwhile, the third degeneracy at higher energy, corresponding to the pitchfork bifurcation of the vertical collision family described in Section~\ref{sec:bif-hill}, persists for $\mu\in[0,0.5]$.

\begin{figure}[!t]
\centering
\includegraphics[width=0.7\textwidth]{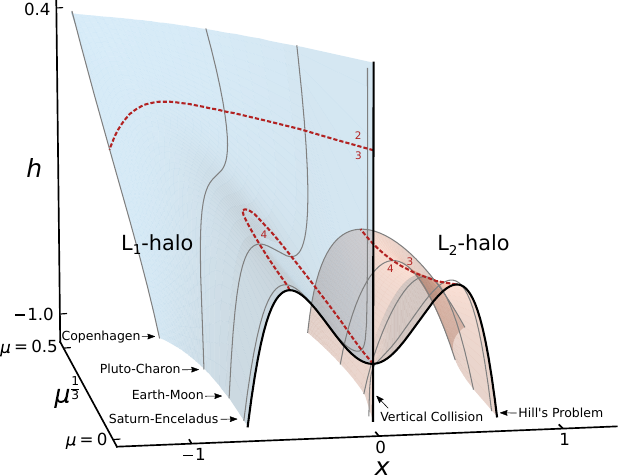}
\caption{Bifurcation surfaces of the $L_1$ and $L_2$ halo families for mass ratios $\mu \in [0, 0.5]$ and energy range $h \in [-1.0, 0.4]$. The horizontal axis shows the $x$-coordinate at the apoapsis. Sections of the surface for selected planet--moon systems, as well as the Hill's problem ($\mu = 0$) and the Copenhagen problem ($\mu = 0.5$), are highlighted. Red dotted curves indicate degeneracies: two associated with fold bifurcations and one with a pitchfork bifurcation at higher energy.}
\label{fig:bif-surface}
\end{figure}

\subsubsection{Earth--Moon Case}

The bifurcation diagram of halo orbits in the Earth--Moon system is shown in Figure~\ref{fig:bif-graph-EM}, with numerical data for selected orbits given in Table~\ref{tbl:data-halo}.
We observe the same topological structure as in the Saturn--Enceladus system, with the graphs displaying the same sequence of bifurcations but at different parameter scales. 
The perturbation from Hill's problem is more pronounced, with less symmetry between $L_1$ and $L_2$ halo families and more distorted bifurcation diagrams.

\begin{figure}[!ht]
\centering
\includegraphics[width=0.75\textwidth]{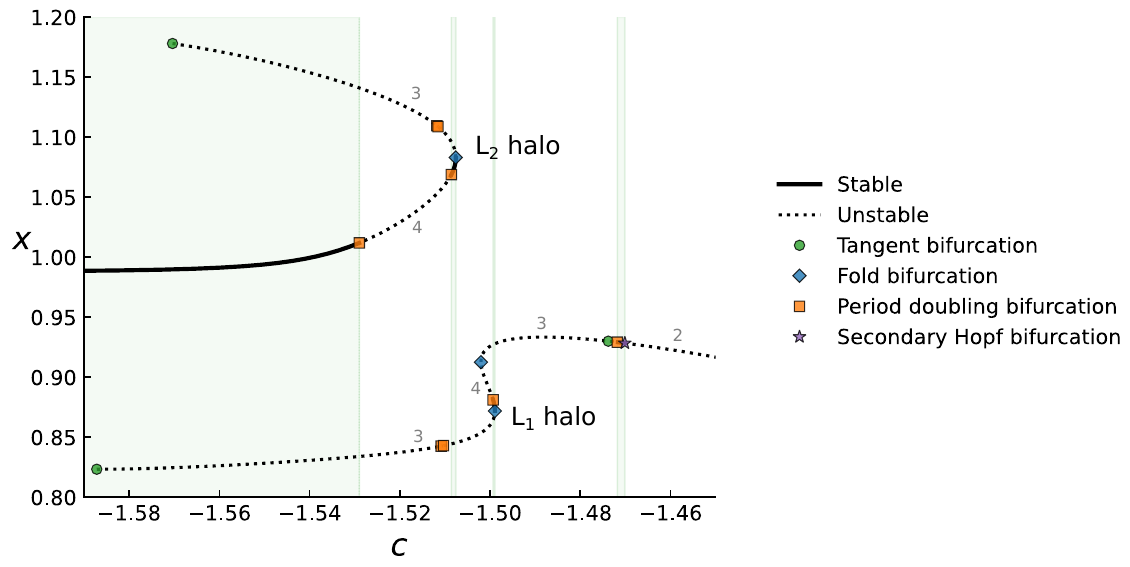}
\caption{Bifurcation diagram of the $L_1$ and $L_2$ halo families in the Earth--Moon system. The horizontal axis corresponds to the Jacobi energy $H_\mu=c$, and the vertical axis shows the $x$-coordinate at the apoapsis.}
\label{fig:bif-graph-EM}
\end{figure}

\subsubsection{Copenhagen Case $(\mu=0.5)$}
\label{sec:copenhagen}

We examine the equal-mass case $\mu=0.5$, known as the \emph{Copenhagen problem}, named after the series of studies carried out at the Copenhagen Observatory under the direction of Str\"omgren,~\cite{stromgren_connaisance_1933}.
This case possesses additional symmetries: reflection across both the plane $x=0$~\eqref{eqn:symmetry-x} and the $y$-axis~\eqref{eqn:symmetry-xz} leaves the equations of motion invariant.

The bifurcation diagram for the Copenhagen problem is shown in Figure~\ref{fig:bif-graph-cph}, with numerical data for selected orbits given in Table~\ref{tbl:data-halo}.
In contrast to the small-$\mu$ regime, the $L_1$ family no longer goes through fold bifurcations, and its Jacobi energy increases monotonically.
The $L_2$ family retains a fold bifurcation, but the order and position of the period-doubling bifurcations are modified.

\begin{figure}[!ht]
\centering
\includegraphics[width=0.75\textwidth]{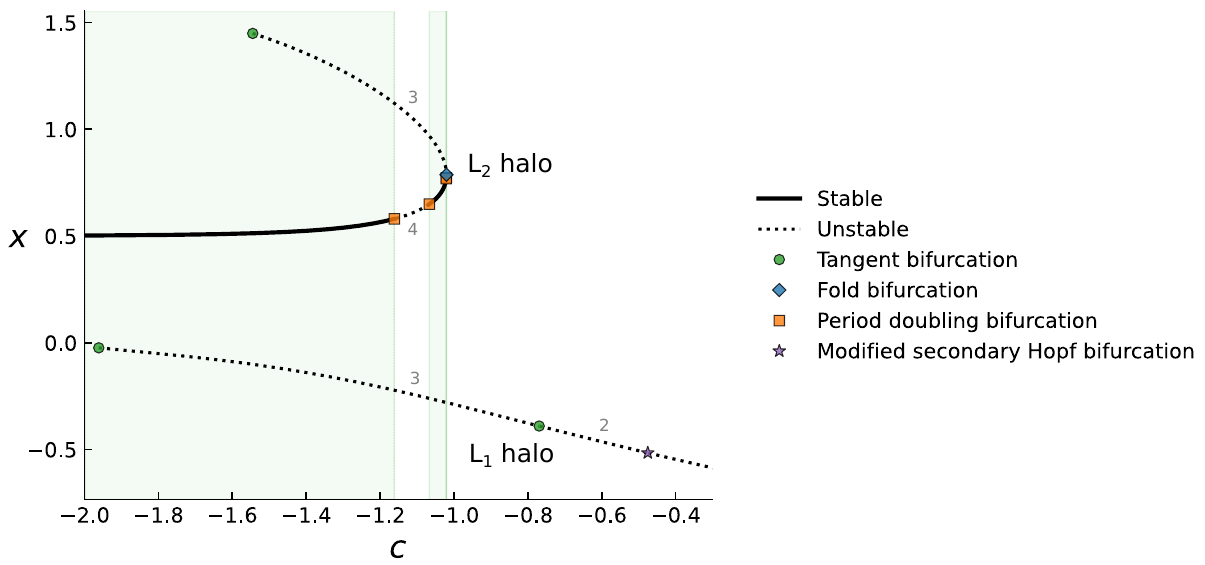}
\caption{Bifurcation diagram of the $L_1$ and $L_2$ halo families in the Copenhagen problem ($\mu=0.5$). The horizontal axis corresponds to the Jacobi energy $H_\mu=c$, and the vertical axis shows the $x$-coordinate at the apoapsis.}
\label{fig:bif-graph-cph}
\end{figure}

Figure~\ref{fig:cph-halo} shows the $L_1$ and $L_2$ halo families of the Copenhagen problem.
In this case, the $L_1$ halo orbits are doubly symmetric with respect to both reflections $r_y$ across the $xz$-plane and $r_{xz}$ across the $y$-axis.
The behavior near collision is visibly distinct from the small mass ratio case: the $z$-coordinate of the apoapsis of the $L_1$ family increases consistently from the start of the continuation, while for the $L_2$ family, the orbits contract smoothly toward collision with the light primary.

\begin{figure}[!ht]
\centering
\includegraphics[width=0.8\textwidth]{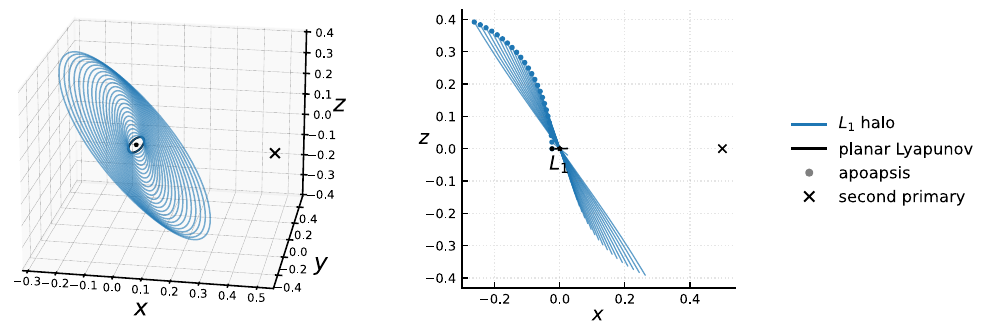}
\includegraphics[width=0.8\textwidth]{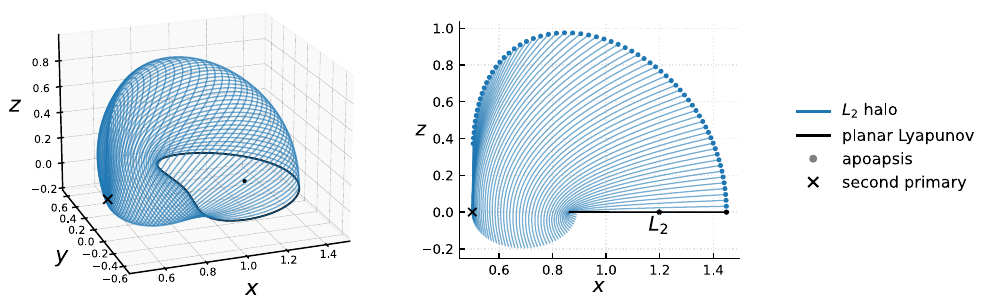}
\caption{Continuation of $L_1$ (top) and $L_2$ (bottom) halo orbits in the Copenhagen problem ($\mu=0.5$).}
\label{fig:cph-halo}
\end{figure}

\subsection{W4/W5 families in Hill's Problem and Earth--Moon System}
\label{sec:w4/w5}
We study the pair of $r_x$-symmetric periodic orbits that bifurcate from the second pitchfork bifurcation of the vertical collision orbit in Hill's problem at energy $h \approx 0.04$, as described in Section~\ref{sec:bif-hill}. These orbits correspond after continuation to the Earth--Moon system to the W4/W5 families studied in~\cite{doedel_elemental_2007}. For consistency, we refer to these orbits in the Hill's problem also as the W4/W5 families.

Figure~\ref{fig:bif-graph-hill-L4} shows the bifurcation graph in Hill's problem, with numerical data for selected orbits of the W5 branch given in Table~\ref{tbl:data-hill}. From the pitchfork bifurcation, the two branches emerge in symmetry in the direction of increasing energy. Along each branch, we observe three period-doubling bifurcations: one pair of Floquet multipliers alternates between negative hyperbolic, elliptic, negative hyperbolic again, and finally elliptic, while the other pair remains positive hyperbolic. The continuation of the W4 family as it bifurcates from the vertical collision orbit is shown in Figure~\ref{fig:family-hill-L4}.

\begin{figure}[!ht]
\centering
\includegraphics[width=0.75\textwidth]{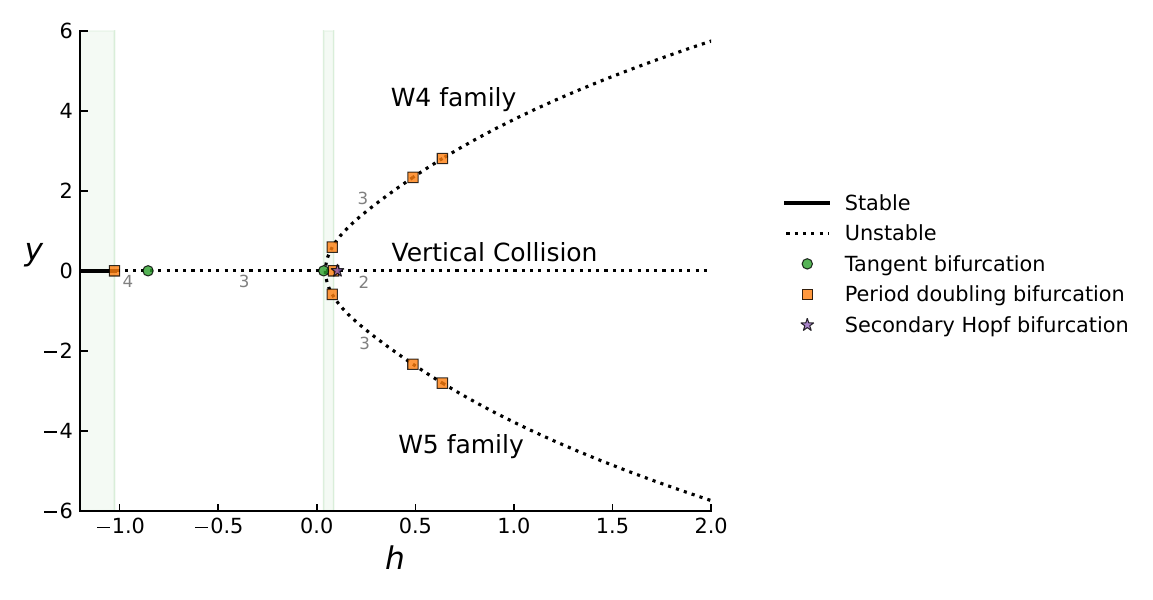}
\caption{Bifurcation diagram of the vertical collision orbit (central line) and the W4 and W5 families of orbits (branches) in Hill's problem. The horizontal axis corresponds to the Hamiltonian energy $\hat{H}_0 = h$, and the vertical axis shows the $y$-coordinate at the apoapsis.}
\label{fig:bif-graph-hill-L4}
\end{figure}

\begin{figure}[!ht]
\centering
\includegraphics[width=0.8\textwidth]{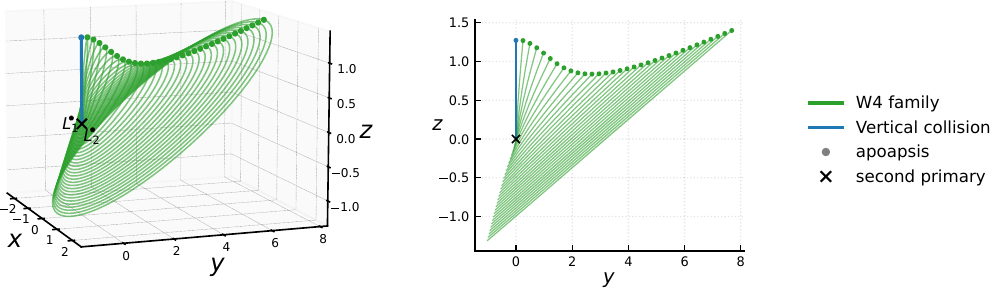}
\caption{Continuation of the W4 family in Hill's problem.}
\label{fig:family-hill-L4}
\end{figure}

As explained in Section~\ref{sec:perturbation}, this pitchfork bifurcation persists under perturbation to the CR3BP because the $r_y$-symmetry of the pitchfork bifurcation persists. The resulting branches are no longer symmetric orbits in the CR3BP, but instead form two distinct non-symmetric families that connect to the vertical periodic orbits emanating from the equilateral Lagrange points $L_4$ and $L_5$.

Figure~\ref{fig:bif-graph-em-L4} shows the corresponding bifurcation graph in the Earth--Moon system, which is continued using the Poincar\'e section $p_3 = 0$. Table~\ref{tbl:data-W4} lists numerical data for selected orbits of the W4 branch. As in Hill's problem, the two branches bifurcate from the $L_1$ halo family in the direction of increasing energy and pass through three period-doubling bifurcations. As the energy increases further, the apoapsis $y$-coordinate starts to decrease until the family connects at energy $c \approx -0.97$ to the vertical periodic orbit associated with $L_4$. Figure~\ref{fig:family-em-L4} shows the continuation of the W4 family from the $L_1$ halo orbit to its termination in the $L_4$ vertical orbit.

\begin{figure}[!ht]
\centering
\includegraphics[width=0.75\textwidth]{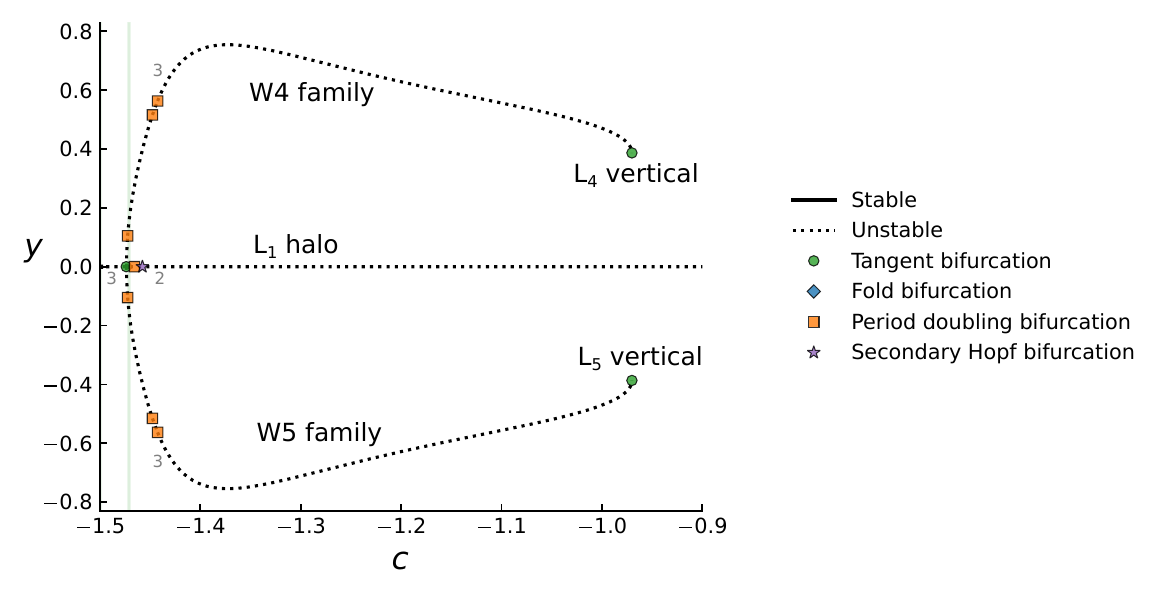}
\caption{Bifurcation diagram of the $L_1$ halo orbits (central line) and the W4 and W5 families of orbits (branches) in the Earth--Moon system. The horizontal axis corresponds to the Hamiltonian energy $H_\mu = c$, and the vertical axis shows the $y$-coordinate at the apoapsis.}
\label{fig:bif-graph-em-L4}
\end{figure}

\begin{figure}[!ht]
\centering
\includegraphics[width=1.0\textwidth]{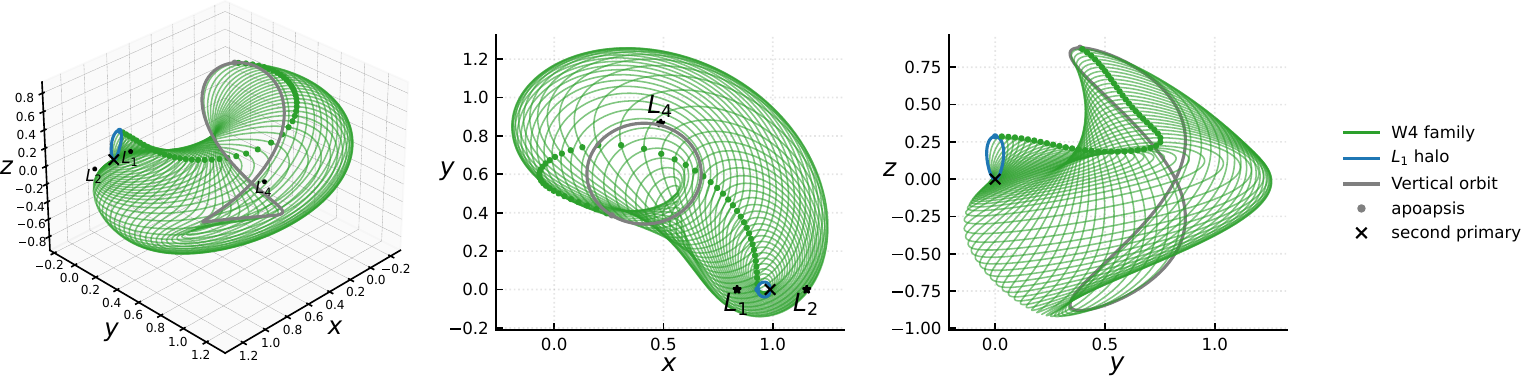}
\caption{Continuation of the W4 family (green) in the Earth--Moon system, from its bifurcation from the $L_1$ halo orbit (blue) to its termination in the $L_4$ vertical orbit at $L_4$ (gray).}
\label{fig:family-em-L4}
\end{figure}

\subsection{Period-Doubling Bifurcation and Butterfly Orbit Family}
\label{sec:butterfly}

The butterfly orbit family is a well-known class of periodic orbits cataloged in the JPL Three-Body Periodic Orbit Catalog. These orbits share several characteristics with halo orbits, including a vertical geometry, close proximity to the light primary, and extended coverage of the polar region,~\cite{grebow_multibody_2008}. Recently, butterfly orbits in the Saturn--Enceladus system have been proposed as candidate science orbits for a mission to Enceladus due to their coverage of the south polar plume region, which is of high scientific interest because of its active geysers,~\cite{boone_approach_2026}. 
In the literature, butterfly orbits have been primarily studied in the Earth--Moon system, where they arise through a period-doubling bifurcation of the $L_2$ halo family,~\cite{howell-campbell_three-dimensional_1999}. 

By continuation in the Moser regularized rescaled Hamiltonian $\hat{H}_\mu$, we obtain the following characterization of the butterfly orbits:
\begin{enumerate}
    \item In the Hill's problem, the butterfly family arises as a doubly symmetric orbit from a period-doubling bifurcation of the vertical collision orbit at energy level $h\approx-1.02$.
    \item Under perturbation to the CR3BP, the butterfly family emerges as a symmetric orbit from the period-doubling bifurcation of the collisional $L_2$ halo orbit. 
\end{enumerate}
Figure~\ref{fig:bif-graph-butterfly} shows the bifurcation diagrams of the butterfly family in Hill's problem and in the Saturn--Enceladus CR3BP, with numerical data for selected orbits given in Table~\ref{tbl:data-hill} and Table~\ref{tbl:data-butterfly}, respectively. In both systems, the butterfly family emerges from a period-doubling bifurcation of either the collisional $L_2$ halo or vertical collision family, where the Floquet multipliers change from elliptic-elliptic (stable) to elliptic-negative hyperbolic (unstable). The butterfly branch itself begins with elliptic-positive hyperbolic type and evolves in the direction of decreasing Hamiltonian energy. The family then undergoes a fold bifurcation, after which the energy increases. In both systems, the Floquet multipliers transition to elliptic-elliptic type right after the fold, then immediately transition through a secondary Hopf bifurcation to four complex conjugate multipliers.

\begin{figure}[!ht]
\centering
\includegraphics[width=0.8\textwidth]{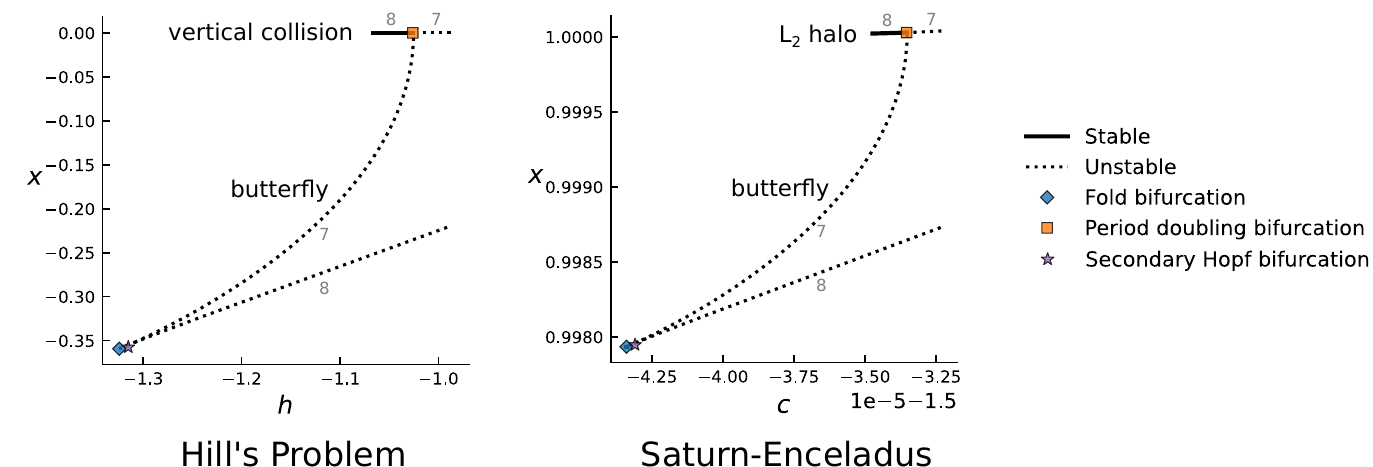}
\caption{Bifurcation diagrams of the butterfly orbit family in the Hill's problem (left) and the Saturn--Enceladus CR3BP (right). The horizontal axis represents the Hamiltonian energy, while the vertical axis shows the $x$-coordinate at one of the two intersections with the Poincar\'e section $y = 0$ (specifically, the intersection with the smaller $x$ value).}
\label{fig:bif-graph-butterfly}
\end{figure}

Figure~\ref{fig:bif-butterfly} shows the emergence of the butterfly family in the Saturn--Enceladus system from a period-doubling bifurcation of the collisional $L_2$ halo orbit. At the bifurcation point, the orbit is nearly vertical, with a diameter in the $xy$-plane of order $10^{-5}$ length units versus a vertical height of 0.00437 LU (1042 km). The perilune radius is approximately 0.05 km, well below the surface of Enceladus (radius 252.1 km). This is significantly closer to collision than in the Earth--Moon system, where the analogous bifurcation occurs at a perilune radius of about 1830 km,~\cite{whitley_earth-moon_2018}. The emerging butterfly orbit preserves the vertical height of the parent halo orbit while developing two symmetric ``wings'' extending in the $x$-direction at apoapsis. After a fold bifurcation, the wings tilt inward and intersect in the $xz$-projection. 

\begin{figure}[!ht]
\centering
\includegraphics[width=0.8\textwidth]{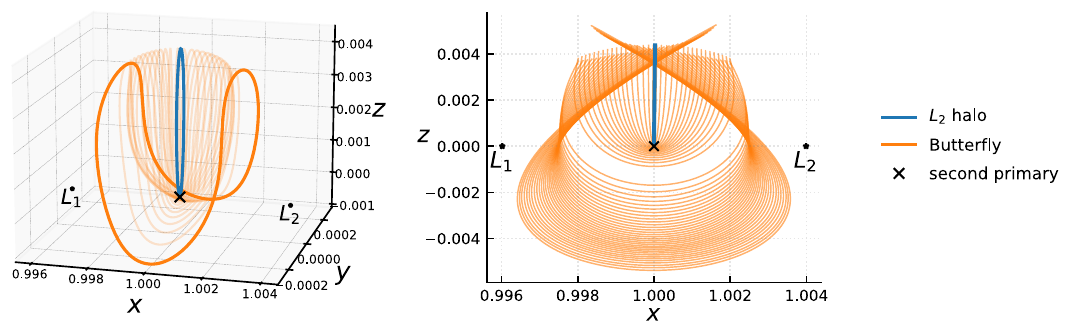}
\caption{Period-doubling bifurcation of the butterfly orbit family from a collisional $L_2$ halo orbit in the Saturn--Enceladus CR3BP.}
\label{fig:bif-butterfly}
\end{figure}

\subsection{Tri-Fly Orbits from Period-Tripling Bifurcations}
\label{sec:trifly}
We examine the period-tripling bifurcation of the vertical collision and collisional $L_2$ halo families, which occurs at an energy level lower than the period-doubling bifurcation from which the butterfly family emerges. This period-tripling bifurcation gives rise to a set of four periodic orbit families with symmetric configurations. Examples of each of the four orbits are shown in Figure~\ref{fig:trifly-shape}. These orbits exhibit a three-lobed structure in the vertical direction and form a triangular pattern in the $xy$-projection. We refer to these as \emph{tri-fly} orbits to reflect this characteristic geometry.
The appearance of these families from the third cover of the vertical collision orbits also appears in~\cite{aydin_Batkhin_hill_2025}.

\begin{figure}[!ht]
\centering
\includegraphics[width=0.9\textwidth]{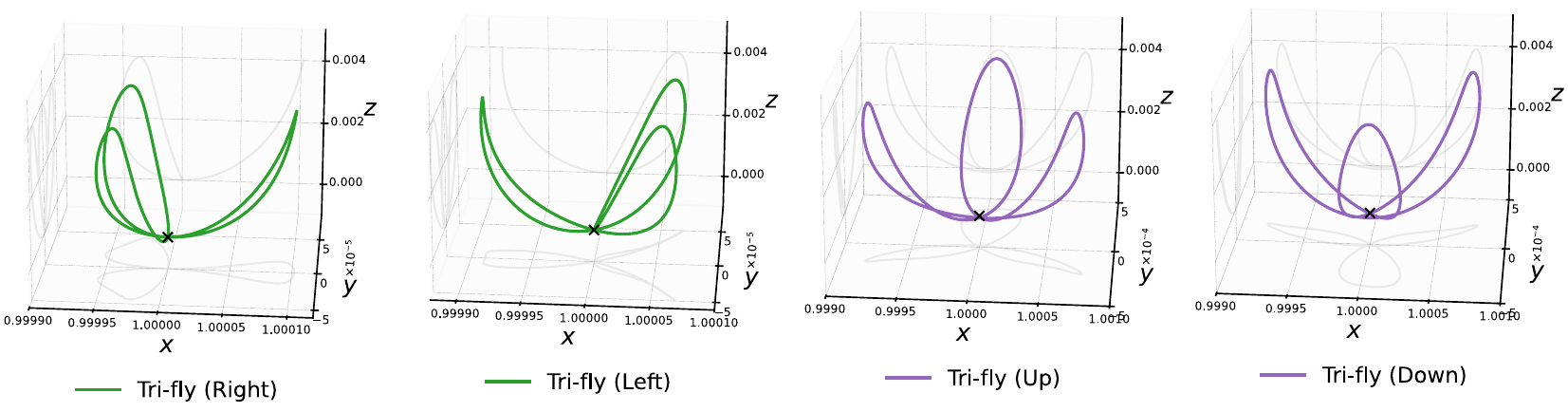}
\caption{Representative tri-fly orbits from the four branches near the triple cover of the $L_2$ halo orbit in the Saturn--Enceladus system.}
\label{fig:trifly-shape}
\end{figure}

The four tri-fly orbits are characterized by the orientation of their triangular structure in the $xy$-projection: ``right'', ``left'', ``up'' and ``down''. The ``right'' and ``left'' orbits (plotted in green) are symmetric with respect to the reflection $r_y$, while the ``up'' and ``down'' orbits (plotted in purple) are $r_x$-symmetric in the Hill's problem but become non-symmetric after perturbation to CR3BP. The ``up'' and ``down'' families are exact mirror images of one another with respect to the plane $y=0$. 

Figure~\ref{fig:bif-graph-trifly} shows the bifurcation structure of the tri-fly families in Hill's problem, the Saturn--Enceladus system, and the Earth--Moon system. 
In Hill's problem, all four tri-fly families emerge simultaneously from the triple cover of the vertical collision orbit, with the four branches exhibiting a $\mathbb{Z}_2 \times \mathbb{Z}_2$ symmetry,~\cite{aydin_Batkhin_hill_2025}.  
After perturbation to the CR3BP, this symmetry is broken. In the Saturn--Enceladus system, the ``up'' and ``down'' families bifurcate via a pitchfork bifurcation from the ``left'' tri-fly branch, whereas in the Earth--Moon system they bifurcate from the ``right'' branch.  
As a result, the four tri-fly families emerge sequentially rather than simultaneously.
Table~\ref{tbl:data-trifly} lists numerical data for selected orbits from the branches in the Saturn--Enceladus and Earth--Moon systems.

\begin{figure}[!ht]
\centering
\includegraphics[width=0.9\textwidth]{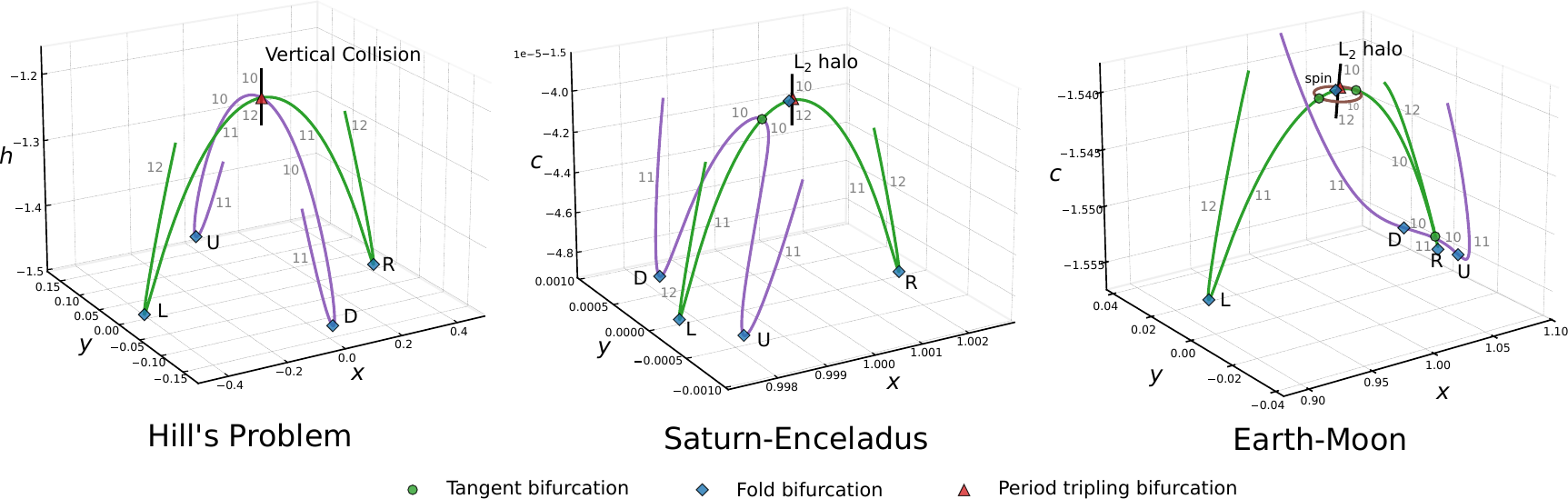}
\caption{Bifurcation graphs of the four tri-fly orbits in the Hill's problem (left), Saturn--Enceladus system (center), and Earth--Moon system (right). Each graph tracks coordinates of a selected intersection point on the Poincar\'e section $p_3=0$. For the Saturn--Enceladus and Earth--Moon systems, the ``left'' and ``right'' branches follows the intersection with $y=0$, while for the ``up'' and ``down'' branches, the same point is continued as it bifurcates out of the plane $y=0$.}
\label{fig:bif-graph-trifly}
\end{figure}

Figure~\ref{fig:touch-and-go} shows a zoomed view near the period-tripling bifurcation of the $L_2$ halo family for the Saturn--Enceladus and Earth--Moon systems.  
In both cases, two branches intersect transversely in the direction of increasing Jacobi energy, with both branches remaining on both sides of the energy at the bifurcation point.  
This is characteristic of a \emph{touch-and-go} bifurcation,~\cite{sadovskii_bifurcation_1996}, also referred to as a \emph{phantom kiss},~\cite{abraham_marsden_foundations_1978}, in Meyer's classification of generic bifurcations,~\cite{meyer_generic_bifurcation_1970}.  
Immediately after the intersection, the ``left'' tri-fly branch undergoes a fold bifurcation, after which the Jacobi energy decreases.  
These observations are consistent with the computer-assisted proofs of touch-and-go bifurcations by Walawska and Wilczak,~\cite{walawska_touch_and_go_2019}, for $L_2$ halo orbits in the Earth--Moon and Sun-Jupiter systems.  
Such bifurcations occur on very small energy scales (on the order of $10^{-12}$ in the Saturn--Enceladus system). To reliably identify the bifurcation type at this scale, we employed for the Saturn--Enceladus system arbitrary-precision arithmetic from MPFR,~\cite{fousse_mpfr_2007}, together with a high-order Taylor integrator implemented in the CAPD library,~\cite{kapela_capd_2021}.  
The result confirms the presence of the same touch-and-go bifurcation in the Saturn--Enceladus system.

\begin{figure}[!ht]
\centering
\includegraphics[width=0.7\textwidth]{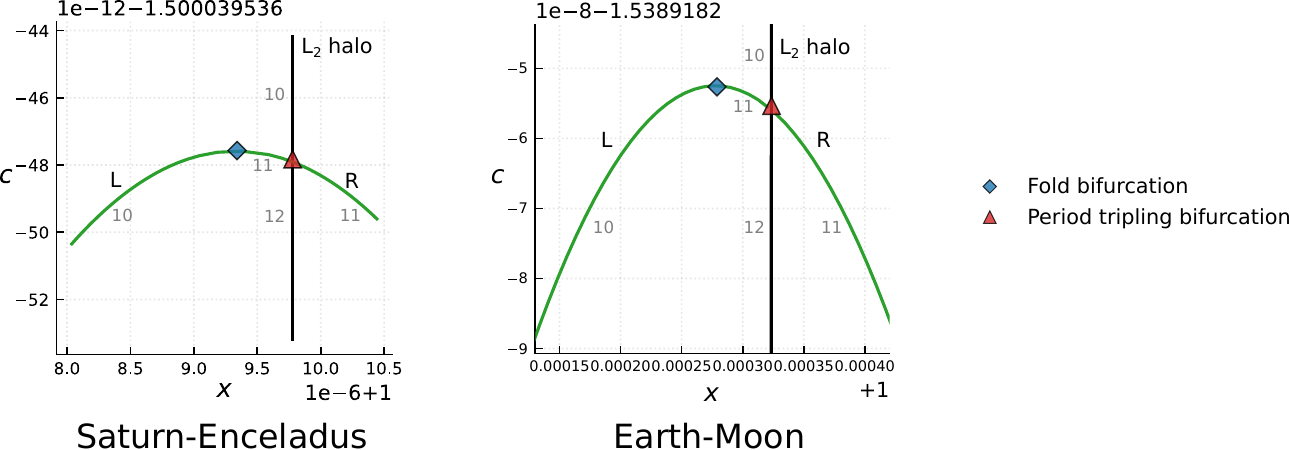}
\caption{Zoomed-in bifurcation graphs near the period-tripling bifurcation of the $L_2$ halo family in the Saturn--Enceladus (left) and Earth--Moon (right) systems.  
The intersection of two branches in the direction of increasing Jacobi energy indicates a touch-and-go bifurcation.}
\label{fig:touch-and-go}
\end{figure}

Finally, we note that the ``right'' tri-fly orbit corresponds to the three-petal tulip-shaped orbit studied in~\cite{koblick_kelly_tulip_2025} for the Earth--Moon system.
The remaining tri-fly families exhibit similar three-lobed spatial structures, differing primarily by orientation, and the corresponding orbits in Saturn--Enceladus involve close flybys of the south pole region of Enceladus (see Figure~\ref{fig:trifly-enc}), making them potentially relevant for future mission design. From a dynamical viewpoint, the bifurcation mechanisms identified here provide insight into the organization of these orbit families.  

In the Earth--Moon system, we additionally observe a short-lived, non-symmetric ``spin'' tri-fly branch, shown in brown in Figure~\ref{fig:bif-graph-trifly}, which connects the ``left'' and ``right'' branches over a narrow parameter range. The brown circle in the bifurcation diagram represents the evolution of the initial condition located at the top of one of the three petals as this point makes a full turn around the orbit. Because the orbit has three petals, this curve traverses the same family evolution three times. Representative orbits from the ``left'', ``down'', and ``spin'' branches in the Earth--Moon system are shown in Figure~\ref{fig:trifly-EM}.

\begin{figure}[!ht]
\centering
\includegraphics[width=1.0\textwidth]{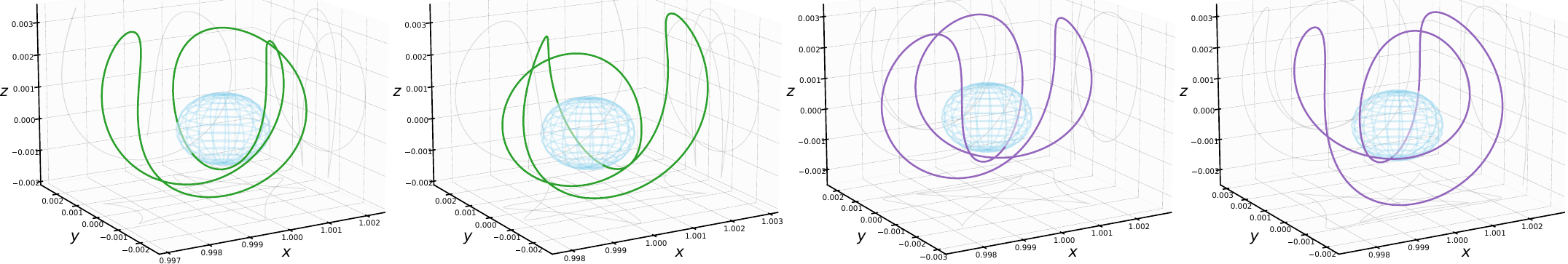}
\caption{Representative tri-fly orbits in the Saturn--Enceladus system, shown in nondimensional coordinates. The four trajectories belong to distinct branches located near the triple cover of the $L_2$ halo family. The orbits illustrated have minimum altitudes of approximately 30km above the surface of Enceladus (radius 252.1 km, shown in light blue).}
\label{fig:trifly-enc}
\end{figure}

\begin{figure}[!ht]
\centering
\includegraphics[width=.9\textwidth]{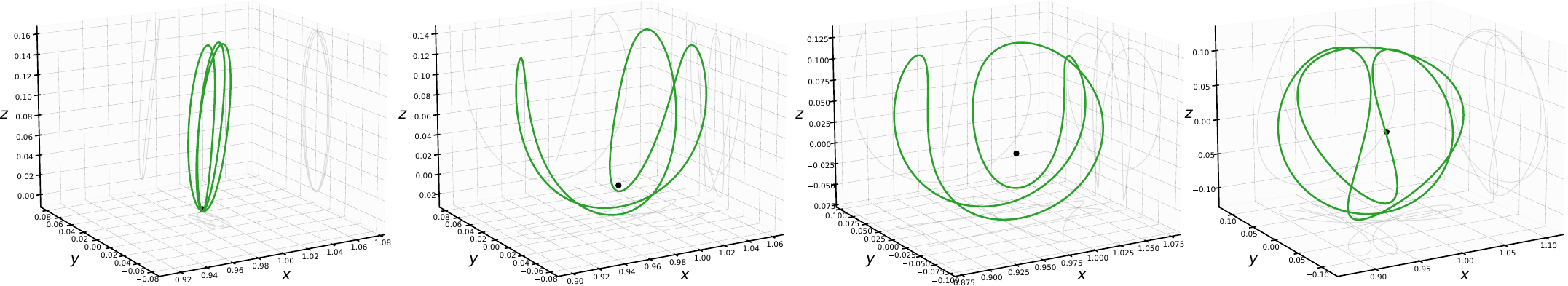}\\[1em]
\includegraphics[width=.9\textwidth]{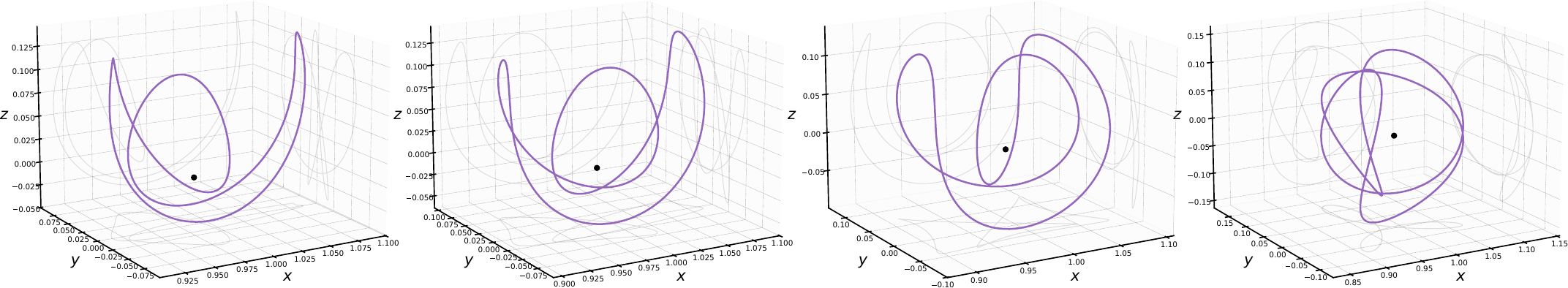}\\[1em]
\includegraphics[width=.9\textwidth]{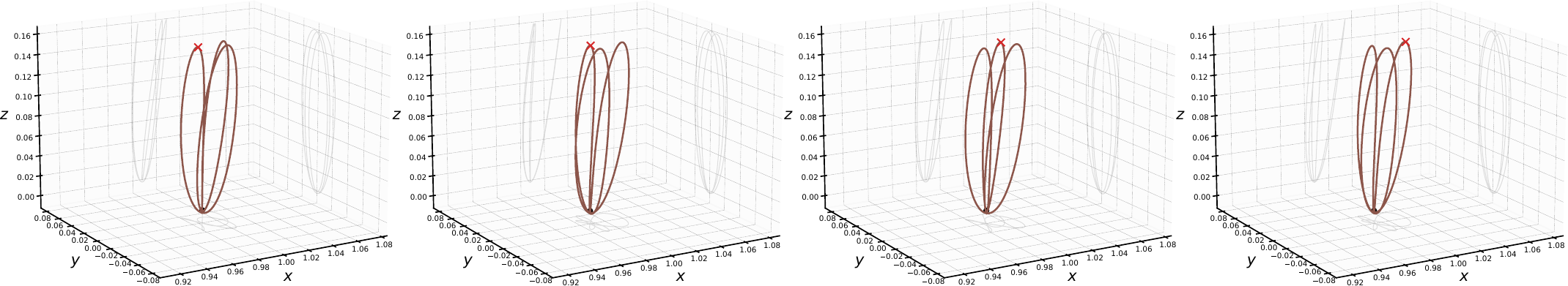}
\caption{Representative tri-fly orbits in the Earth--Moon system from the ``left'' (top), ``down'' (middle), and ``spin'' (bottom) branches. The black circle marks the location of the light primary. For the spin branch, the red cross indicates the initial point used for continuation.}
\label{fig:trifly-EM}
\end{figure}

\section{Conclusion}
In this work, we investigated the bifurcation structure of polar orbits near the light primary in the CR3BP using a Hamiltonian formulation together with Moser regularization, using the vertical collision orbit in Hill's problem as a central organizing structure for the emergence and persistence of orbit families under perturbation.
We identified and organized the pitchfork, period-doubling, and period-tripling bifurcations of the vertical collision family, along with the associated halo, butterfly, and tri-fly families, and constructed bifurcation graphs for representative systems including Saturn--Enceladus, Earth--Moon, and the Copenhagen problem. The use of Conley--Zehnder indices provided a consistent topological classification of these families and a diagnostic tool for detecting bifurcations. For convenience of reference, we summarize the principal orbit families, their symmetries, and indices in Table~\ref{tbl:orbit-summary}.

\begin{table}[!htb]
\centering
\caption{Summary of the orbit families studied, together with their symmetries and Conley--Zehnder indices. Symmetric orbits are symmetric with respect to $y=0$, while those marked with $x$ are symmetric with respect to $x=0$.}
\label{tbl:orbit-summary}
\begin{tabular}{lcccc}
\toprule
Orbit Family & System & Symmetries & CZ Index & Section \\
\midrule
vertical collision & Hill's problem & doubly symmetric & 4, 3, 2 & \ref{sec:bif-hill} \\
$L_1$ halo & $\mu\in[0,\mu^*)$ & symmetric & 4, 3, 2 & \ref{sec:bif-surface} \\
           & $\mu\in(\mu^*,0.5)$ & symmetric & 3, 2 &  \\
           & Copenhagen & doubly symmetric & 3, 2 &  \\
$L_2$ halo & $\mu\in[0,0.5]$ & symmetric &  4, 3 & \\
$L_1$ halo (collisional) & small $\mu>0$ & symmetric & 3, 2 & \ref{sec:collision-perturbation}, \ref{sec:se-halo} \\
$L_2$ halo (collisional) & small $\mu>0$ & symmetric & 4 &\\
butterfly & Hill's problem & doubly symmetric & 7, 8 & \ref{sec:butterfly} \\
          & Saturn--Enceladus  & symmetric        & 7, 8 &  \\
tri-fly (``left'', ``right'') & Hill's problem & symmetric & 11, 12  & \ref{sec:trifly} \\
                              & {\tiny Saturn--Enceladus, Earth--Moon}  & symmetric    & 10, 11, 12  &  \\
tri-fly (``up'', ``down'') & Hill's problem & symmetric ($x$) & 10, 11 &  \\ 
                           & {\tiny Saturn--Enceladus, Earth--Moon}  & non-symmetric & 10, 11 &  \\ 
tri-fly (``spin'') & Earth--Moon & non-symmetric & 10 &  \\ 
W4/W5 & Hill's problem & symmetric ($x$) & 3 & \ref{sec:w4/w5}\\
      & Earth--Moon     & non-symmetric     & 3 & \\
moth & Hill's problem & doubly symmetric & 4 & \ref{sec:bif-hill} \\
\botrule
\end{tabular}
\end{table}

We conclude by noting several limitations and directions for future work. 
The present study focuses on small mass ratios and near-collision regimes, and is based primarily on high-precision numerical continuation, while a rigorous analytical treatment of the observed bifurcation structures remains to be developed. Extensions to more general settings, such as elliptic or multi-body problems, are also left for future investigation. Nevertheless, the framework and orbit families identified here provide a foundation for systematic study of near-collision dynamics and for future work integrating symplectic methods, numerical continuation, and mission design, particularly for low-altitude trajectories near small bodies.

\appendix
\section{Numerical Implementation of Rescaled CR3BP}
\label{sec:appendix_rescaled_ham}
The rescaled CR3BP Hamiltonian $\hat{H}_\mu$ in~\eqref{eqn:rescaled-CR3BP-expanded} contains a singular denominator $\mu^{2/3}$. 
For numerical integration at small $\mu$, we let $\nu = \mu^{1/3}$ and rewrite the Hamiltonian in a convenient form using the identity:
\[
\frac{1}{\sqrt{1 + x}} - 1 + \frac{x}{2} = x^2 \left( \frac{2 + \sqrt{1 + x}}{2\sqrt{1 + x} \, (1 + \sqrt{1 + x})^2} \right), \quad x > -1.
\]
For $x = 2q_1 \nu + |\mathbf{q}|^2 \nu^2 > -1$, i.e., away from the first primary located at $\mathbf{q} = (-\mu^{-\frac{1}{3}}, 0, 0)$, we obtain the numerically convenient form:
\begin{equation*}
    \hat{H}_\mu(\mathbf{q},\mathbf{p}) = \frac{1}{2}|\mathbf{p}|^2 - \frac{1}{|\mathbf{q}|} + p_1q_2 - p_2q_1 
    + \frac{1-\mu}{2}|\mathbf{q}|^2
    - \frac{(1-\mu) (2q_1 + |\mathbf{q}|^2\nu)^2 (2 + \sqrt{1 + 2q_1\nu + |\mathbf{q}|^2\nu^2})}{2\sqrt{1 + 2q_1\nu + |\mathbf{q}|^2\nu^2} \, (1 + \sqrt{1 + 2q_1\nu + |\mathbf{q}|^2\nu^2})^2}
    .
\end{equation*}
This expression is analytic in $\nu = \mu^{1/3}$, and allows accurate numerical evaluation for small $\mu$.

\section{Restricted phase space}
\label{sec:restricted-phase-space}
The Moser regularized Hamiltonian described in Section~\ref{sec:moser-reg-ham} fits into the general setting of Hamiltonian systems with constraints,~\cite{dirac_generalized_1950,dirac_lectures_1967}. Here, we give a brief outline of the method together with a motivating example.
We consider a Hamiltonian $\tilde{H}$ on a phase space $\R^{2n}$ with coordinates $(\mathbf{q},\mathbf{p})$, which we want to restrict to constraints $f_1(\mathbf{q},\mathbf{p})=\ldots=f_{2k}(\mathbf{q},\mathbf{p})=0$. Let $\omega=\sum_{i=1}^n dp_i\wedge dq_i$ be the standard symplectic form and consider the values
\begin{equation}
\label{eqn:poisson-bracket-matrix}
A_{ij} = \omega(X_{f_i}, X_{f_j}) = \omega(\nabla f_i, \nabla f_j) = \{f_i, f_j\}, \quad i,j=1,\cdots,2k,
\end{equation}
where $\{,\}$ denotes the Poisson bracket.
\begin{definition}
We call the set $R=\{ x\in \R^{2n}~|~ f_1(x)=\cdots=f_{2k}(x) =0 \}$ the \emph{restricted phase space} if the matrix $[A_{ij}]$ is non-degenerate.
\end{definition}
\begin{lemma}
The restricted phase space $R$ is a symplectic submanifold of codimension $2k$ in $\mathbb{R}^{2n}$.
\end{lemma}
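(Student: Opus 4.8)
The plan is to check, at each point $x\in R$, two facts: that $R$ is a genuine submanifold of codimension $2k$, and that $\omega$ restricts non-degenerately to $T_xR$. Both reduce, fiberwise, to elementary symplectic linear algebra powered by the non-degeneracy of $[A_{ij}]$.

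First I would use that $\omega$ is non-degenerate on $\R^{2n}$, so the assignment $df_i\mapsto X_{f_i}$ (determined by $\iota_{X_{f_i}}\omega=df_i$ up to the usual sign) is a linear isomorphism $T_x^*\R^{2n}\to T_x\R^{2n}$; hence $df_1,\dots,df_{2k}$ are linearly independent at $x$ if and only if $X_{f_1},\dots,X_{f_{2k}}$ are. Were the latter dependent, say $\sum_j c_jX_{f_j}=0$ with $c\neq 0$, then $\sum_j A_{ij}c_j=\omega\bigl(X_{f_i},\sum_j c_jX_{f_j}\bigr)=0$ for every $i$, contradicting $\det[A_{ij}]\neq 0$. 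Thus $df_1,\dots,df_{2k}$ are linearly independent along $R$, so $0$ is a regular value of $F=(f_1,\dots,f_{2k})$ and $R=F^{-1}(0)$ is a submanifold of codimension $2k$ by the regular value theorem. Moreover $T_xR=\ker dF_x=\{v\mid \omega(X_{f_i},v)=0,\ i=1,\dots,2k\}=V^{\omega}$, the symplectic orthogonal complement of $V:=\operatorname{span}(X_{f_1},\dots,X_{f_{2k}})\subset T_x\R^{2n}$.

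It then remains to show $\omega|_{V^{\omega}}$ is non-degenerate. The key observation is that, in the basis $X_{f_1},\dots,X_{f_{2k}}$ of $V$, the bilinear form $\omega|_V$ is represented precisely by $[A_{ij}]$, which is non-degenerate by hypothesis; hence $V$ is a symplectic subspace. I then invoke the standard fact that the $\omega$-orthogonal complement of a symplectic subspace is again symplectic: if $V$ is symplectic then $V\cap V^{\omega}=0$ (an element of the intersection lies in the kernel of $\omega|_V$), and since $\omega$ is non-degenerate on $W:=T_x\R^{2n}$ one has $\dim V+\dim V^{\omega}=\dim W$, so $W=V\oplus V^{\omega}$; a vector $v\in V^{\omega}$ pairing trivially with all of $V^{\omega}$ then pairs trivially with all of $V$ by definition, hence with all of $W$, forcing $v=0$. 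Applying this gives that $T_xR=V^{\omega}$ is symplectic, i.e.\ $\omega$ restricts to a symplectic form on $R$.

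The calculations are routine and I do not expect a genuine obstacle; the only care needed is the bookkeeping between $df_i$, the gradient $\nabla f_i$, and the Hamiltonian vector field $X_{f_i}$ (which differ by the standard complex structure $J$, $J^2=-\id$), and consistent use of the identity $\omega(X_{f_i},X_{f_j})=\{f_i,f_j\}=A_{ij}$, which is exactly what makes the Gram matrix of $\omega|_V$ equal to $[A_{ij}]$. The one conceptual input, the symplectic-linear-algebra lemma on orthogonal complements of symplectic subspaces, is either cited or proved in the two lines above.
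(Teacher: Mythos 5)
Your proof is correct and follows essentially the same route as the paper: non-degeneracy of $[A_{ij}]$ gives linear independence of the $X_{f_i}$ (hence regularity of $R$), $T_xR$ is identified as the symplectic orthogonal of $V=\operatorname{span}(X_{f_i})$ on which $\omega$ has Gram matrix $[A_{ij}]$, and the splitting $T_x\R^{2n}=V\oplus T_xR$ forces $\omega|_{T_xR}$ to be non-degenerate. You merely write out explicitly (via the complement lemma and the dimension count) what the paper phrases as a block-diagonal matrix argument, so there is no substantive difference.
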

\begin{proof}
Since $[A_{ij}]$ is non-degenerate, the vector fields $X_{f_i}$ are linearly independent. Hence, $R$ is a regular submanifold of codimension $2k$ in $\mathbb{R}^{2n}$. 
Each $X_{f_i}$ is orthogonal to the tangent space of $R$, since
\[
X_{f_i}^T \cdot v = \omega(\nabla f_i, v) = 0, \quad \forall v \in TR.
\]
Thus, we have the splitting
\[
T\mathbb{R}^{2n}|_R = \langle X_{f_1}, \ldots, X_{f_{2k}} \rangle \oplus TR.
\]
With respect to this splitting, the matrix representation of $\omega$ is block diagonal:
\[
[\omega] =
\begin{pmatrix}
[A_{ij}] & 0 \\
0 & [\omega]|_{T R}
\end{pmatrix}.
\]
The upper-left block is non-degenerate by assumption, and since $\omega$ is non-degenerate, the lower-right block must also be non-degenerate. Hence $R$ is a symplectic submanifold.
\end{proof}

Our goal is to study the dynamics on $R$ given by the restricted Hamiltonian $H := \tilde{H}|_{R}$.
\begin{lemma}
\label{lem:restricted_ham_vf}
The Hamiltonian vector field on the restricted phase space is given by
\[
X_H = X_{\tilde{H}} + \sum_{i=1}^{2k} c_i X_{f_i}
\]
where  
$
c_j = - \sum_{i=1}^{2k} \left(A^{-1}\right)_{ij} \{\tilde{H}, f_i\}
$
and $A$ is the matrix defined in~\eqref{eqn:poisson-bracket-matrix}.
\end{lemma}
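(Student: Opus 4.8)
The plan is to pin down the coefficients $c_i$ by imposing the two properties that characterize the Hamiltonian vector field of $H=\tilde H|_R$ on the symplectic submanifold $(R,\omega|_R)$: namely that $X_H$ is tangent to $R$, and that it represents $dH$ under $\omega|_R$, which is non-degenerate by the preceding lemma, so $X_H$ is unique. I would begin from the ansatz $X_H = X_{\tilde H} + \sum_{i=1}^{2k} c_i X_{f_i}$. This form is forced by the preceding lemma: it gives the $\omega$-orthogonal splitting $T\R^{2n}|_R = \langle X_{f_1},\dots,X_{f_{2k}}\rangle \oplus TR$, so every vector field along $R$ — in particular the restriction of $X_{\tilde H}$ — differs from its $TR$-component by a linear combination of the $X_{f_i}$, and the goal is to identify that $TR$-component with $X_H$.

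The first observation I would record is that the condition ``$X_H$ represents $dH$ on $TR$'' holds for the ansatz automatically, as soon as tangency is imposed, and independently of the $c_i$: for $v\in TR$ one has $df_i(v)=0$ and hence $\omega(X_{f_i},v)=0$, so $\omega(X_H,v)=\omega(X_{\tilde H},v)= -d\tilde H(v) = -dH(v)$ (with the $\iota_X\omega=-dH$ convention fixed by $\omega=\sum dp_i\wedge dq_i$). Combined with $X_H\in TR$ and non-degeneracy of $\omega|_R$, this already determines $X_H$ uniquely among tangent vector fields. Consequently the entire content of the lemma is the determination of the $c_i$ that render the ansatz tangent to $R$.

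The remaining step is to impose tangency, $df_j(X_H)=0$ for $j=1,\dots,2k$. Pairing the ansatz with $X_{f_j}$ via $\omega$ and using $\omega(X_{f_i},X_{f_j})=A_{ij}$ together with $\omega(X_{\tilde H},X_{f_j})=\{\tilde H,f_j\}$ converts this into a linear system $\sum_i A_{ij}\,c_i = \pm\{\tilde H,f_j\}$, i.e.\ the transposed system with right-hand side $b_j=\{\tilde H,f_j\}$. Since $A$ is skew-symmetric and invertible by hypothesis, this has the unique solution $c_i = \sum_j (A^{-1})_{ij}\{\tilde H,f_j\}$ — the skew-symmetry of $A$ absorbing one sign so that the stated formula emerges after bookkeeping the sign conventions. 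Substituting back yields the claimed expression for $X_H$.

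I do not expect a genuine obstacle here, as the argument is essentially linear algebra on the block-diagonal decomposition of $\omega$. The one point requiring care is consistency of sign conventions — how $X_f$, the Poisson bracket $\{f,g\}$, and $A_{ij}$ are normalized relative to $\omega=\sum dp_i\wedge dq_i$ — since a single flipped sign propagates through the linear system and would replace $c_i$ by $-c_i$. It is also worth emphasizing that the non-degeneracy of $[A_{ij}]$ is used twice: once to solve the linear system for $c$, and once (through the preceding lemma) to ensure $\omega|_R$ is non-degenerate so that $X_H$ is defined and unique in the first place.
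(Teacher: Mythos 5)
Your argument is correct and takes essentially the same route as the paper: both rely on the $\omega$-orthogonal splitting $T\R^{2n}|_R = \langle X_{f_1},\dots,X_{f_{2k}}\rangle \oplus TR$ from the preceding lemma, pair the decomposition of $X_{\tilde H}$ with the $X_{f_j}$, and invert $A$; the only difference is that you phrase the determining equations as the tangency conditions $df_j(X_H)=0$, whereas the paper obtains the same equations by identifying $X_H$ with the $TR$-projection of $X_{\tilde H}$ and using $\omega(X_H,X_{f_j})=0$. Your sign bookkeeping (skew-symmetry of $A$ absorbing one sign) correctly reproduces the stated formula $c_i=\sum_j (A^{-1})_{ij}\{\tilde H,f_j\}$.
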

\begin{proof}
    By the definition of Hamiltonian vector fields,
    \[
        i_{X_H}\omega = -d(\tilde{H}|_R) = -(d\tilde{H})|_{TR} = (i_{X_{\tilde{H}}} \omega)|_{TR}.
    \]
    Hence, the projection of $X_{\tilde{H}}$ to $TR$ agrees with $X_{H}$ on $R$. We write 
    \[
        X_H = X_{\tilde{H}} + \sum_{i=1}^{2k} c_i X_{f_i}.
    \]
    Taking the symplectic inner product with $X_{f_j}$ gives
    \[
        \omega(X_{\tilde{H}}, X_{f_j}) = -\sum_{i=1}^{2k} c_i \omega(X_{f_i}, X_{f_j}), \quad j=1,\cdots, 2k.
    \]
    Solving this linear system for $c_j$ gives the stated formula.
\end{proof}

\begin{example}[Mathematical Pendulum]
We start with the unconstrained Hamiltonian of a particle in linear potential. In polar coordinates, such a Hamiltonian reads
\[
H_0=\frac{1}{2}(p_r^2+\frac{p_\phi^2}{r^2}) -g r\cos\phi.
\]
The unconstrained Hamiltonian equations are given by
\[
(\dot r,\, \dot \phi,\, \dot p_r,\, \dot p_\phi)
=
\bigl(p_r,\, \tfrac{p_\phi}{r^2},\, \tfrac{p_\phi^2}{r^3} + g\cos\phi,\, -g r \sin\phi\bigr).
\]
The constraints are given by $f_1=r-1$ and $f_2=p_r$. The Poisson brackets are then 
\[
\{ f_1, f_2 \} =1,\quad \{ f_1, H \} =p_r,\quad
\{ f_2, H \} = \frac{p_\phi^2}{r^3}+g\cos \phi
\]
Together with the Hamiltonian vector fields $X_{f_1}$ and $X_{f_2}$, we get the Hamiltonian vector field on the restricted phase space. This is
\[
(\dot r,\, \dot \phi,\, \dot p_r,\, \dot p_\phi)
=
(0,\, \tfrac{p_\phi}{r^2},\, 0,\, -g r \sin\phi),
\]
which are the equations for the mathematical pendulum.
\end{example}

\section{Details of Differential Correction in Moser Regularization}
\label{sec:df_derivation}

We derive the Jacobian matrix used in the differential correction scheme formulated in Moser regularized coordinates (Section~\ref{sec:correction}). We consider a Moser regularized Hamiltonian $Q$ corresponding to the original Hamiltonian $H$, and apply corrections for a fixed energy level $H = c$. The procedure described here is valid for general (non-symmetric) periodic orbits; for symmetric cases, the reduced formulation in Section~\ref{sec:correction} can be applied analogously.

We work with the Poincar\'e section $\xi_3 = 0$ and define the vector of free variables as
\[
    X = (\xi_1, \xi_2, \eta_1, \eta_2, \tau),
\]
where $\tau$ denotes the (predicted) period of the orbit. The remaining variables $\xi_0$, $\eta_0$, and $\eta_3$ are determined from the three constraint equations:
\[
    f_1 = |\boldsymbol{\xi}|^2 - 1 = 0, \qquad 
    f_2 = \boldsymbol{\xi} \cdot \boldsymbol{\eta} = 0, \qquad 
    Q(\boldsymbol{\xi}, \boldsymbol{\eta}) = \frac{g}{2}.
\]
From the first two constraints we obtain
\[
    \xi_0 = \sqrt{1 - \xi_1^2 - \xi_2^2}, \qquad 
    \eta_0 = \frac{1}{\xi_0}(\xi_1 \eta_1 + \xi_2 \eta_2),
\]
and $\eta_3$ is determined by numerically solving the energy constraint $Q(\xi, \eta) = g/2$ near a predicted value, for instance using Brent's method,~\cite{brent_algorithm_1971}.

Periodic orbits satisfy the constraint function
\[
    F(X) = p \circ \phi^{\tau}(\xi_0, \xi_1, \xi_2, 0, \eta_0, \eta_1, \eta_2, \eta_3) 
    - (\xi_1, \xi_2, 0, \eta_1, \eta_2),
\]
where $\phi^{\tau}$ denotes the flow of the Hamiltonian vector field $X_Q$ for time $\tau$, and $p$ is the projection onto $(\xi_1, \xi_2, \xi_3, \eta_1, \eta_2)$. The correction proceeds via Newton iterations:
\[
    X_{i+1} = X_i - DF(X_i)^{-1} F(X_i), \qquad i = 0, 1, 2, \dots,
\]
until convergence, i.e., when $|F(X_i)| < 10^{-10}$. For some high-precision computations in Saturn--Enceladus system in Section~\ref{sec:trifly}, we used the MPFR library and employed a similar strategy with an order 50 Taylor integrator from the CAPD library with $|F(X_i)| < 10^{-30}$ as stop criterion.

To compute the Jacobian $DF$, we first evaluate the derivatives of the dependent variables with respect to the free variables. The expressions are:
\begin{align*}
    & \frac{\partial \xi_0}{\partial \xi_i} = - \frac{\xi_i}{\sqrt{1 - \xi_1^2 - \xi_2^2}}, \qquad i = 1, 2, \\
    & \frac{\partial \eta_0}{\partial \xi_0} = - \frac{\xi_1 \eta_1 + \xi_2 \eta_2}{\xi_0^2}, \qquad
    \frac{\partial \eta_0}{\partial \xi_i} = -\frac{\eta_i}{\xi_0}, \qquad 
      \frac{\partial \eta_0}{\partial \eta_i} = -\frac{\xi_i}{\xi_0}, \qquad i = 1, 2, \\
    & \frac{\partial \eta_3}{\partial \xi_i} = 
        \left( \frac{\partial Q}{\partial \eta_3} \right)^{-1} 
        \frac{\partial Q}{\partial \xi_i}, \qquad
    \frac{\partial \eta_3}{\partial \eta_i} = 
        \left( \frac{\partial Q}{\partial \eta_3} \right)^{-1} 
        \frac{\partial Q}{\partial \eta_i}, \qquad i = 0, 1, 2.
\end{align*}

The columns of the Jacobian matrix $DF$ can be computed from these derivatives together with the linearized flow $d\phi^\tau$, which we obtain by integrating the first-order variational equations along the trajectory. For example, the derivative with respect to $\xi_1$ is given by
\[
\frac{\partial F}{\partial \xi_1} 
= dp \circ d\phi^{\tau} \circ
\begin{bmatrix}
    \frac{\partial \xi_0}{\partial \xi_1} &
    1 &
    0 &
    0 &
    0 &
    \frac{\partial \eta_0}{\partial \xi_1} &
    0 &
    \frac{\partial \eta_3}{\partial \xi_1}
\end{bmatrix}^T
- 
\begin{bmatrix}
    1 & 0 & 0 & 0 & 0
\end{bmatrix}^T,
\]
and similarly for the variables $\xi_2$, $\eta_1$, $\eta_2$.  
The derivative with respect to the time variable $\tau$ is
\[
    \frac{\partial F}{\partial \tau} 
    = dp \circ X_Q(\phi^{\tau}(\xi_0, \xi_1, \xi_2, 0, \eta_0, \eta_1, \eta_2, \eta_3)).
\]

\section{Bifurcation Analysis via Conley--Zehnder Indices}
\label{sec:appendix_cz}

To a non-degenerate periodic Hamiltonian orbit $\gamma$ one can associate an integer valued invariant $\mu_{CZ}(\gamma)$ called the (transverse) \emph{Conley--Zehnder index},~\cite{conley_morse_1984, salamon_zehnder_1992, Cieliebak_action}.  
Intuitively, this invariant measures the winding number of the linearized flow along $\gamma$. 
We recall some important properties most relevant for our purposes:
\begin{enumerate}
    \item (invariance along non-degenerate families)
    Let $\gamma_s$ be a continuous family of non-degenerate periodic orbits of Hamiltonians $H_s$.
    Then the Conley--Zehnder index is constant along the family:
    \[
    \mu_{CZ}(\gamma_s) = \mu_{CZ}(\gamma_{s_0}) \quad \text{for all } s.
    \] \label{itm:cz-invariance}
    \item (index jump implies bifurcation)
    Let $\gamma_s$ be a continuous family of isolated periodic orbits of Hamiltonians $H_s$.  
    If $\gamma_{s_0 \pm \epsilon}$ are non-degenerate and satisfy
    \[
    \mu_{CZ}(\gamma_{s_0 - \epsilon}) \neq \mu_{CZ}(\gamma_{s_0 + \epsilon}),
    \]
    then there is a bifurcation occurring in the interval $(s_0-\epsilon,s_0+\epsilon)$ generating at least one additional orbit.  
    This follows from the behavior of local Floer homology; see Section~3.2 of~\cite{Ginzburg_Conley_conjecture}.
    \item (parity) If $\gamma$ is a non-degenerate periodic Hamiltonian orbit in a $2n+2$-dimensional phase space with reduced monodromy $M_\gamma$, i.e.,~the differential of the Poincar\'e map restricted to the energy surface, then
    $$
    \text{sign}(\det(\id -M_\gamma))=(-1)^{\mu_{CZ}(\gamma)-n}.
    $$
    
\end{enumerate}
A direct corollary of these properties is that if the type of precisely one pair of eigenvalues changes from positive hyperbolic to elliptic (or vice versa), then another periodic orbit family must be involved.
These properties make the Conley--Zehnder index an effective tool for classification of periodic orbit families and bifurcation analysis. 
Recent studies have worked to develop practical and rigorous methods for computing the Conley--Zehnder index,~\cite{moreno_aydin_van_Koert_Frauenfelder_Koh_Bifurcation_Graphs_2024,joung_van_koert_computational_2025}, as well as applying it to the analysis of bifurcation networks,~\cite{aydin_Batkhin_hill_2025,aydin_dro_2025}.

Here we recall a precise definition of the Conley--Zehnder index following the paper of Hofer, Wysocki, Zehnder,~\cite{HWZ_propertiesII_CZ}, see also \cite{salamon_zehnder_1992} for an older reference. A detailed exposition of index theory can be found in the book of Long, \cite{yiming_long_2002}.
First, we consider a path $\psi: [0,1] \to Sp(2n)$ of symplectic matrices beginning at the identity $\psi(0) = \id$, which is non-degenerate, meaning $\psi(1)$ has no eigenvalue equal to $1$. Extend $\psi$ to a path $\tilde{\psi}: [0,2]\to Sp(2n)$ ending at either $W_+=-\text{id}$ or $W_- = \text{diag}(2, 1/2, -1, \cdots, -1)$ in such a way that the extension avoids intersecting with the Maslov cycle
\[V = \{\psi\in Sp(2n) \,|\, \det (\psi - \text{id}) = 0\}.\]
\begin{definition}
The \textbf{Conley--Zehnder index} of a non-degenerate path $\psi: [0,1] \to Sp(2n)$ is defined by
\[ \mu_{CZ}(\psi) = \text{deg}\left(
    \det_{\mathbb{C}}\rho(\tilde{\psi})^2    
\right), \]
where $\rho(A)=(A A^T)^{-1/2} A\in U(n)$ for $A\in Sp(2n)$.
\end{definition}
In other words, the extended path $\tilde{\psi}$ retracts to a \emph{loop} of unitary matrices based at the identity after taking the square, and the Conley--Zehnder index measures the winding number of the loop in the circle obtained by taking the complex determinant. 

For a periodic Hamiltonian orbit $\gamma$, the Conley--Zehnder index $\mu_{CZ}(\gamma)$ is defined as the index of the symplectic path representing the transverse linearized flow along $\gamma$. 
This requires choosing a suitable symplectic frame along the orbit. In this work, we follow the computation methods, including the choice of frames and extension procedures, described in~\cite{moreno_aydin_van_Koert_Frauenfelder_Koh_Bifurcation_Graphs_2024}. For vertical collision orbits, we take as transverse frame $\{\partial_{\xi_1},\partial_{\xi_2},\partial_{\eta_1},\partial_{\eta_2}\}$, following~\cite{lee_conley-zehnder_2026}. By Property~\ref{itm:cz-invariance}, the index is constant along any non-degenerate family of periodic orbits, so it suffices to compute the index at a single orbit in the family. 
As an additional consistency check, we track index changes along branches via Floquet multipliers corresponding to eigenvalues of the first kind (see~\cite[Theorem 3.1]{salamon_zehnder_1992}), or equivalently, via Krein signatures of elliptic multipliers as suggested in Section~1.2.5 of~\cite{aydin_cz_indices_2023} and in Section~3 of \cite{aydin_Batkhin_hill_2025}. All bifurcation graphs are consistent with local Floer homology invariance.

The resulting Conley--Zehnder indices are shown as labels in the bifurcation graphs in Figures~\ref{fig:bif-graph-hill}, \ref{fig:bif-graph-SE}, \ref{fig:bif-surface}, \ref{fig:bif-graph-EM}, \ref{fig:bif-graph-cph}, \ref{fig:bif-graph-hill-L4}, \ref{fig:bif-graph-em-L4}, \ref{fig:bif-graph-butterfly}, \ref{fig:bif-graph-trifly}, \ref{fig:touch-and-go} and summarized in Table~\ref{tbl:orbit-summary}.
In exactly two instances, namely the indices 10 and 11 along the ``left'' tri-fly branch of the Saturn--Enceladus system in Figure~\ref{fig:touch-and-go}, where the orbits lie very close to collision, we inferred the values of these indices from the bifurcation graph and the invariance of local Floer homology.

\section{Tables of Numerical Data}
\label{sec:data}
This section lists representative numerical initial conditions and parameters for the periodic orbit families discussed throughout the paper.  
Tables~\ref{tbl:data-hill}, \ref{tbl:data-halo}, \ref{tbl:data-W4}, \ref{tbl:data-butterfly}, \ref{tbl:data-trifly} correspond, respectively, to families in Hill's problem, $L_1$ and $L_2$ halo families, the W4 family in Earth--Moon system, the butterfly family in Saturn--Enceladus system, and tri-fly families in the Saturn--Enceladus and Earth--Moon systems.
The velocity coordinates can be computed from the momentum coordinates using
\[
\dot{x} = p_x + y, \qquad
\dot{y} = p_y - x, \qquad
\dot{z} = p_z .
\]

\begin{table}[!h]
\centering
\caption{
Numerical data for families of periodic orbits in Hill's problem discussed in Section~\ref{sec:bif-hill}.
The first block lists orbits with initial points in the fixed point locus $y = p_x = p_z = 0$ of the symmetry $r_y$, while the second block lists orbits with initial points in the fixed point locus $x = p_y = p_z = 0$ of $r_x$.
}
\scriptsize
\label{tbl:data-hill}
\begin{tabular}{cccccc}
\toprule
Name & $x$ & $z$ & $p_y$ & $T$ & $h$ \\ 
\midrule
\midrule
butterfly & -0.03651091 & 0.74408716 & 1.11537047 & 6.48641828 & -0.40406497 \\
 & 0.15344310 & 0.69382805 & 1.01735432 & 5.40894933 & -0.82872290 \\
 & 0.31373073 & 0.64770376 & 0.78111902 & 4.38127005 & -1.21815041 \\
 & 0.33917468 & 0.66730700 & 0.38677365 & 3.29676796 & -1.28468005 \\
 & 0.00105687 & 0.76070040 & 0.00069833 & 2.59220887 & -1.02524577 \\
\midrule
$L_2$ halo & 0.00652442 & 0.83212544 & -0.01058634 & 1.43526448 & -0.85540604 \\
 & 0.16470111 & 0.84032224 & -0.25267590 & 1.56243359 & -0.76831767 \\
 & 0.48300292 & 0.84023895 & -0.50476165 & 2.39710400 & -0.54090674 \\
 & 0.74475267 & 0.37738055 & -0.08943047 & 3.05044463 & -1.61058169 \\
 & 0.77465668 & 0.00239376 & 0.16083897 & 3.08144148 & -2.00263878 \\
\botrule 
\toprule
Name & $y$ & $z$ & $p_x$ & $T$ & $h$ \\ 
\midrule
\midrule
W5 & -0.00247958 & 1.28311642 & 0.00072179 & 2.12148920 & 0.04384436 \\
 & -1.81056721 & 0.90059059 & 0.88776896 & 3.40220733 & 0.33679449 \\
 & -3.23602921 & 0.85312882 & 2.03455674 & 4.72527773 & 0.78687147 \\
 & -4.75269812 & 0.99058248 & 3.22909173 & 5.46967388 & 1.44533469 \\
 & -7.83803457 & 1.42579969 & 5.54301749 & 6.02603141 & 3.52448098 \\
\midrule
moth & 0.00208436 & 1.30802781 & -0.00115735 & 4.29937525 & 0.09096003 \\
 & 0.14944867 & 1.30005049 & -0.08316349 & 4.31157251 & 0.08309423 \\
 & 0.28137753 & 1.27942298 & -0.15744727 & 4.34273900 & 0.06278138 \\
 & 0.40993477 & 1.24617439 & -0.23136773 & 4.39180769 & 0.03014646 \\
 & 0.53337713 & 1.20055507 & -0.30442937 & 4.45667843 & -0.01433050 \\
\botrule
\end{tabular}
\end{table}

\begin{table}[!h]
\centering
\caption{
Numerical data for families of halo orbits in the Saturn--Enceladus ($\mu=1.901109735892602 \times 10^{-7}$), Earth--Moon ($\mu=1.215058560962404 \times 10^{-2}$), and Copenhagen ($\mu=0.5$) problems, discussed in Section~\ref{sec:bif-surface}.
The table lists initial points in the set $y = p_x = p_z = 0$.
}
\scriptsize
\label{tbl:data-halo}
\begin{tabular}{cccccc}
\toprule
Name & $x$ & $z$ & $p_y$ & $T$ & JC \\ 
\midrule
\midrule
Saturn--Enceladus ($L_1$) & 0.99555512 & 0.00000865 & 0.99907483 & 3.07300833 & 3.00013180 \\
 & 0.99966888 & 0.00484292 & 1.00051446 & 1.46749685 & 3.00005367 \\
 & 0.99976926 & 0.00486571 & 1.00035359 & 1.46808488 & 3.00005339 \\
 & 0.99986183 & 0.00492480 & 1.00020373 & 1.48318883 & 3.00005206 \\
 & 0.99954922 & 0.02997170 & 1.00000186 & 3.07931014 & 2.99911341 \\
\midrule
Saturn--Enceladus ($L_2$) & 1.00446234 & 0.00002474 & 1.00092408 & 3.08985334 & 3.00013145 \\
 & 1.00358682 & 0.00414366 & 0.99756654 & 2.84702198 & 3.00005386 \\
 & 1.00259534 & 0.00487855 & 0.99713023 & 2.29702399 & 3.00003471 \\
 & 1.00149738 & 0.00487847 & 0.99786712 & 1.74702570 & 3.00004356 \\
 & 0.99999981 & 0.00010148 & 0.99999980 & 0.00520965 & 3.00374425 \\
\midrule
Earth--Moon ($L_1$) & 0.82339081 & 0.00103249 & 0.94973500 & 2.74300140 & 3.17434277 \\
 & 0.91890397 & 0.21177502 & 1.05774225 & 1.80862330 & 3.00337211 \\
 & 0.88595087 & 0.42184618 & 0.99962864 & 2.61193686 & 2.81913249 \\
 & 0.63283975 & 0.75849418 & 0.99321272 & 3.01315930 & 2.28395868 \\
 & 0.20992909 & 0.97049259 & 0.99178787 & 3.09818511 & 1.43677954 \\
\midrule
Earth--Moon ($L_2$) & 1.17798563 & 0.05218884 & 1.00812542 & 3.39296970 & 3.14051568 \\
 & 1.14530465 & 0.15491451 & 0.92422480 & 3.15495183 & 3.06471043 \\
 & 1.04302706 & 0.19358172 & 0.89834221 & 1.79395706 & 3.02934689 \\
 & 0.99924358 & 0.15729331 & 0.95188391 & 1.16861225 & 3.08057420 \\
 & 0.98823427 & 0.08853076 & 0.98094996 & 0.52115483 & 3.21829578 \\
\midrule
Copenhagen ($L_1$) & -0.02377411 & 0.00248286 & 0.27050755 & 2.22785644 & 3.92297735 \\
 & -0.12528920 & 0.28960511 & 0.69898244 & 2.61242164 & 2.89901314 \\
 & -0.33304409 & 0.41409855 & 0.94297130 & 2.80021603 & 1.79733622 \\
 & -0.50643165 & 0.41569432 & 1.10083931 & 2.72905145 & 0.99683575 \\
 & -0.62729000 & 0.37391371 & 1.21194586 & 2.64446380 & 0.38441185 \\
\midrule
Copenhagen ($L_2$) & 1.44932605 & 0.00683372 & 0.68943343 & 4.79706182 & 3.08945544 \\
 & 0.68785971 & 0.91744045 & 0.33629049 & 2.87129200 & 2.08364840 \\
 & 0.54708846 & 0.68434161 & 0.36645204 & 1.72811245 & 2.52392065 \\
 & 0.50332638 & 0.38058169 & 0.44698519 & 0.72758179 & 3.80951423 \\
 & 0.50243539 & 0.35619875 & 0.45295015 & 0.66009014 & 3.99733982 \\
\botrule
\end{tabular}
\end{table}

\begin{table}[!h]
\centering
\caption{
Numerical data for the family of W4 orbits in the Earth--Moon system ($\mu=1.215058560962404 \times 10^{-2}$) discussed in Section~\ref{sec:w4/w5}.
The table lists initial points in the Poincar\'e section $p_z = 0$.
}
\scriptsize
\label{tbl:data-W4}
\begin{tabular}{ccccccc}
\toprule
$x$ & $y$ & $z$ & $p_x$ & $p_y$ & $T$ & JC \\ 
\midrule
\midrule
0.92992204 & 0.00831517 & 0.28718304 & -0.00259574 & 1.01167769 & 2.13136119 & 2.94699520 \\
0.85929551 & 0.36892448 & 0.20171967 & -0.20057109 & 1.01400994 & 3.34282973 & 2.91937772 \\
0.70694851 & 0.56788118 & 0.18490979 & -0.43270525 & 0.97238939 & 4.57301856 & 2.88393421 \\
0.35373104 & 0.74572542 & 0.23623186 & -0.80586247 & 0.80297179 & 5.70771796 & 2.78771490 \\
0.26181698 & 0.38719822 & 0.88039857 & -0.81874675 & 0.55823887 & 6.29659799 & 1.94015412 \\
\botrule
\end{tabular}
\end{table}

\begin{table}[!h]
\centering
\caption{
Numerical data for the family of butterfly orbits in the Saturn--Enceladus system ($\mu=1.901109735892602 \times 10^{-7}$) discussed in Section~\ref{sec:butterfly}.
The table lists initial points in the set $y = p_x = p_z = 0$.
}
\scriptsize
\label{tbl:data-butterfly}
\begin{tabular}{ccccc}
\toprule
$x$ & $z$ & $p_y$ & $T$ & JC \\ 
\midrule
\midrule
0.99971524 & 0.00414620 & 0.99375245 & 5.98886720 & 3.00003817 \\
0.99846561 & 0.00379996 & 0.99493982 & 4.73386848 & 3.00007211 \\
0.99793464 & 0.00370852 & 0.99694724 & 3.65683074 & 3.00008677 \\
1.00000904 & 0.00437315 & 0.99992376 & 2.59245415 & 3.00006701 \\
\botrule
\end{tabular}
\end{table}

\begin{table}[!h]
\centering
\caption{
Numerical data for families of tri-fly orbits in the Saturn--Enceladus ($\mu=1.901109735892602 \times 10^{-7}$) and Earth--Moon ($\mu=1.215058560962404 \times 10^{-2}$) systems, discussed in Section~\ref{sec:trifly}.
The table lists initial points in the Poincar\'e section $p_z=0$. The first three blocks correspond to the ``left'', ``right'', and ``down'' branches in the Saturn--Enceladus system, respectively, and the latter four blocks correspond to the ``left'', ``right'', ``down'', and ``spin'' branches in the Earth--Moon system.
}
\scriptsize
\label{tbl:data-trifly}
\begin{tabular}{ccccccc}
\toprule
$x$ & $y$ & $z$ & $p_x$ & $p_y$ & $T$ & JC \\ 
\midrule
\midrule
0.99827609 & 0. & 0.00331462 & 0. & 0.99417151 & 5.56892725 & 3.00008200 \\
 0.99782777 & 0. & 0.00331618 & 0. & 0.99534783 & 5.06258982 & 3.00009207 \\
 0.99762642 & 0. & 0.00327310 & 0. & 0.99632460 & 4.55648333 & 3.00009768 \\
 0.99772438 & 0. & 0.00336180 & 0. & 0.99723932 & 4.11033853 & 3.00009680 \\
 0.99910113 & 0. & 0.00388707 & 0. & 0.99916754 & 3.51975288 & 3.00008177 \\
\midrule
1.00096592 & 0. & 0.00285229 & 0. & 1.00798139 & 6.06432367 & 3.00007091 \\
 1.00201279 & 0. & 0.00333619 & 0. & 1.00509800 & 5.27465517 & 3.00008834 \\
 1.00238054 & 0. & 0.00328407 & 0. & 1.00343415 & 4.43827266 & 3.00009809 \\
 1.00157133 & 0. & 0.00371145 & 0. & 1.00145720 & 3.65470318 & 3.00008721 \\
 1.00040243 & 0. & 0.00395861 & 0. & 1.00031910 & 3.47540031 & 3.00007958 \\
\midrule
0.99811097 & 0.00063667 & 0.00315288 & -0.00113555 & 0.99576567 & 4.58316855 & 3.00009609 \\
 0.99806937 & 0.00056470 & 0.00321905 & -0.00102552 & 0.99640104 & 4.32562324 & 3.00009714 \\
 0.99813521 & 0.00049874 & 0.00333466 & -0.00093899 & 0.99711194 & 4.06858844 & 3.00009590 \\
 0.99833846 & 0.00043747 & 0.00349667 & -0.00085760 & 0.99782996 & 3.84254338 & 3.00009236 \\
 0.99937425 & 0.00005092 & 0.00392987 & -0.00010386 & 0.99940930 & 3.49226063 & 3.00008044 \\
\midrule
\midrule
0.94793098 & 0. & 0.12170927 & 0. & 0.67900140 & 6.19578364 & 3.05747803 \\
 0.91589365 & 0. & 0.13132440 & 0. & 0.75516018 & 5.49195700 & 3.08319368 \\
 0.89716380 & 0. & 0.12842024 & 0. & 0.81323976 & 4.79624209 & 3.10382096 \\
 0.90002340 & 0. & 0.13120471 & 0. & 0.86625092 & 4.10013552 & 3.10667659 \\
 1.00029762 & 0. & 0.15929935 & 0. & 0.94992269 & 3.57181449 & 3.07783651 \\
\midrule
1.08217698 & 0. & 0.13424580 & 0. & 1.15302005 & 4.92342938 & 3.10616770 \\
 1.08630173 & 0. & 0.13437073 & 0. & 1.12427014 & 4.55865242 & 3.10980656 \\
 1.08332198 & 0. & 0.13841079 & 0. & 1.09005444 & 4.19127621 & 3.10735419 \\
 1.06715654 & 0. & 0.14843456 & 0. & 1.04136696 & 3.82611726 & 3.09601197 \\
 1.00034976 & 0. & 0.15930177 & 0. & 0.94996933 & 3.57181411 & 3.07783652 \\
\midrule
0.97372769 & 0.02375546 & 0.10681694 & -0.22005240 & 1.24980891 & 6.52907349 & 3.04597481 \\
 1.01518394 & 0.03817911 & 0.12080554 & -0.11871179 & 1.23797734 & 5.80348812 & 3.07209939 \\
 1.03961130 & 0.03489134 & 0.12508657 & -0.08221176 & 1.20625154 & 5.27810229 & 3.09014103 \\
 1.06027898 & 0.02735759 & 0.12934983 & -0.05367899 & 1.16410393 & 4.79361488 & 3.10309049 \\
 1.08436353 & 0.00169336 & 0.13733436 & -0.00272643 & 1.09670036 & 4.25414758 & 3.10828990 \\
\midrule
0.98544897 & 0.00130061 & 0.15809376 & -0.00251335 & 0.93763406 & 3.57752841 & 3.07855577 \\
 0.99092635 & 0.00565808 & 0.15826095 & -0.01108568 & 0.94256700 & 3.57777105 & 3.07856462 \\
 1.00023160 & 0.00703285 & 0.15866064 & -0.01398668 & 0.95112177 & 3.57763400 & 3.07855963 \\
 1.00879712 & 0.00554339 & 0.15911740 & -0.01107591 & 0.95884087 & 3.57751718 & 3.07855536 \\
 1.01448333 & 0.00145601 & 0.15944547 & -0.00290482 & 0.96371388 & 3.57775384 & 3.07856399 \\
\botrule
\end{tabular}
\end{table}

\section*{Acknowledgements}
Chankyu Joung and Otto van Koert were supported by the National Research Foundation of Korea (NRF), grant number (MSIT) (RS-2023-NR076656), funded by the Korean government.
Chankyu Joung was partially supported by BK21 SNU Mathematical Sciences Division.
The authors thank Cengiz Aydin for many helpful discussions during the course of this project. 
Chankyu Joung and Dayung Koh are grateful to Damon Landau at the Jet Propulsion Laboratory for his kind support and for valuable conversations related to Enceladus orbits. 
Chankyu Joung thanks researchers at the Jet Propulsion Laboratory for valuable interactions during his visit, as well as Dongho Lee and Beomjun Sohn for their assistance.
Finally, we are grateful to the referees for their helpful suggestions.

\bibliography{bibliography}

\end{document}